\tikzstyle{mybox} = [draw=black, fill=white,  thick,
\tikzstyle{mybox} = [draw=black, fill=white,  thick,
\tikzstyle arrowstyle=[scale=1]
\tikzstyle directed=[postaction={decorate,decoration={markings,
		mark=at position .65 with {\arrow[arrowstyle]{stealth}}}}]
\tikzstyle reverse directed=[postaction={decorate,decoration={markings,
		mark=at position .65 with {\arrowreversed[arrowstyle]{stealth};}}}]
\newcommand{\boundellipse}[3]
{(#1) ellipse (#2 and #3)
}
\newtheorem{theorem}{Theorem}
\newtheorem{lemma}{Lemma}
\newtheorem{corollary}{Corollary}
\newtheorem{proposition}{Proposition}
\theoremstyle{definition}
\newtheorem{definition}{Definition}
\newtheorem{remark}{Remark}
\newtheorem{example}{Example}
\begin{document}

\title{A Family of Iteration Functions for General Linear Systems}

\author{Bahman Kalantari\footnote{Emeritus Professor of Computer Science, Rutgers University, Piscataway, New Jersey (kalantari@cs.rutgers.edu)}}

\date{}
\maketitle

\begin{abstract}
We introduce innovative algorithms and analyze their theoretical complexity for computing exact or approximate (minimum-norm) solutions to a linear system $Ax=b$ or the {\it normal equation} $A^TAx=A^Tb$, where $A$ is an $m \times n$ real matrix of arbitrary rank. For the cases where $A$ is symmetric and positive semidefinite (PSD), we present more efficient algorithms.

First, we introduce the {\it Triangle Algorithm} (TA), a {\it convex-hull membership} algorithm designed for linear systems. Given the current iterate $b_k=Ax_k$ inside the ellipsoid $E_{A,\rho}=\{Ax: \Vert x \Vert \leq \rho\}$, TA iteratively either computes an improved approximation $b_{k+1}=Ax_{k+1}$ within $E_{A,\rho}$ or determines that $b$ lies outside. By adjusting $\rho$, TA can compute the desired approximations.

We then introduce a dynamic variant of TA, the {\it Centering Triangle Algorithm} (CTA), generating residual, $r_k=b -Ax_k$, using a remarkably simple iteration formula: $F_1(r)=r-(r^THr/r^TH^2r)Hr$, where $H=AA^T$. If $A$ is symmetric PSD, $H$ can be taken as $A$ itself. In a broader context, for each $t=1, \dots, m$, we derive $F_t(r)=r- \sum_{i=1}^t \alpha_{t,i}(r) H^i r$, where $\alpha_{t,i}$ are solutions to a $t \times t$ {\it auxiliary linear system}. Iterations of $F_t$ correspond to a Krylov subspace method with restart.

Let $\kappa^+(H)$ denote the ratio of the largest to smallest positive eigenvalues of $H$. We prove that, regardless of the invertibility of $H$, when the system $Ax=b$ is consistent, in $k=O({\kappa^+(H)}{t^{-1}} \ln \varepsilon^{-1})$ iterations of $F_t$, $\Vert r_k \Vert \leq \varepsilon$. Each iteration takes $O(tN+t^3)$ operations, where $N$ is the number of nonzero entries in $A$. When $H=AA^T$, the algorithm implicitly solves $AA^Tw=b$, all the while operating with the original variable $x$. It thus computes an approximate minimum-norm solution.

According to our complexity bound, by directly applying $F_t$ to the normal equation, one can achieve $\Vert A^TAx_k - A^Tb \Vert \leq \varepsilon$ in $O({\kappa^+(AA^T)}{t}^{-1} \ln \varepsilon^{-1})$ iterations. Therefore, a hybrid strategy is to initially apply CTA to an arbitrary linear system $Ax=b$ and, if $r_k$ does not decrease sufficiently, indicating possible inconsistency, switch CTA to the normal equation.

On the other hand, given arbitrary residual $r$, we can compute  $s$,  the degree of its  minimal polynomial  with respect to $H$  in $O(sN+s^3)$ operations.  Then $F_s(r)$  gives the minimum-norm  solution of $Ax=b$ or an exact solution of $A^TAx=A^Tb$.

The proposed algorithms are characterized by their simplicity of implementation and theoretical robustness. When applicable and advantageous, they can benefit from preconditioning techniques. We present sample computational results, comparing the performance of CTA with that of the CG and GMRES methods. These results support CTA as a highly competitive option. Practitioners experimentation with these algorithms may confirm their competitiveness among state-of-the-art linear solvers.
\end{abstract}



\newpage

\tableofcontents

\newpage

\section{Introduction} \label{sec1}

In this article, our primary focus is on computing a range of solutions for a linear system and its associated {\it normal equation}. Specifically, we aim to describe algorithms for computing exact or approximate solutions for a linear system  or its normal equation. These considerations also include exact or  approximate minimum-norm solutions. We investigate the linear equation $Ax=b$, where $A$ is an  $m \times n$ real matrix of arbitrary rank, and $b \in \mathbb{R}^m$ is a nonzero vector. As a result, the system $Ax=b$ can take on various forms, including being {\it underdetermined} ($m \leq n$), {\it overdetermined} ($m \geq n$), {\it consistent} (solvable), or {\it inconsistent} (unsolvable). We interchangeably use the terms {\it consistency} and {\it solvability}.

A least-squares solution seeks to minimize $\Vert Ax - b \Vert$, where $x \in \mathbb{R}^n$. An optimality condition reveals that a least-squares solution also satisfies the normal equation, $A^TAx =A^Tb$. The solvability of the normal equation can also be established through the well-known Farkas lemma, a duality concept in linear programming. According to this lemma, either $Ax=b$ has a solution, or there exists a vector $y$ such that $A^Ty=0$ and $b^Ty < 0$. Applying the lemma to the normal equation trivially proves its solvability.
Thus, even in cases where there is no solution to a linear system, a {\it certificate} of verification exists. Throughout we denote the minimum-norm solution of the normal equation as $x_*$. If $Ax=b$ is consistent, then $x_*$ also serves as a solution to it. The following are the definitions of the approximate solutions of interest

\begin{definition} \label{deffirst}
For a given real system $Ax=b$ and a tolerance value $\varepsilon \in (0,1),$ we define $x_\varepsilon$ as an $\varepsilon$-approximate solution if $\Vert Ax_\varepsilon - b \Vert \leq \varepsilon.$ Furthermore, $x_\varepsilon$ is an $\varepsilon$-approximate minimum-norm solution to $Ax=b$ if it meets two conditions: first, it is an $\varepsilon$-approximate solution, and second, it can be expressed as $x_\varepsilon = A^Tw$ for some $w \in \mathbb{R}^m.$
Similarly, we define approximate solutions for the normal equation by replacing $A$ with $A^TA$ and $b$ with $A^Tb$ in the above definitions.
\end{definition}

A problem closely linked to the solvability of $Ax=b$ is to determine if, for a given $\rho > 0$, $b$ resides within the ellipsoid $E_{A, \rho}= \{Ax: \Vert x \Vert \leq \rho\}$. This ellipsoid represents the image of the ball with radius $\rho$, centered at the origin, transformed by the linear map defined by $A$. Algorithmic attempt for solving this problem will play a crucial role in the development of novel iterative methods. Using the ellipsoid, another definition of approximate minimum-norm solution can be given.

\begin{definition} \label{minnormapprx}
We define $x_\varepsilon$ as an $\varepsilon$-approximate minimum-norm solution of $Ax=b$ if:

1. $\Vert Ax_\varepsilon - b \Vert \leq \varepsilon$.

2. There exists a positive number $\underline \rho$ such that $\underline \rho \leq \rho_\varepsilon = \Vert x_\varepsilon \Vert$ and
$b \not \in E^\circ_{A, \underline \rho}= \{Ax: \Vert x \Vert < \underline \rho\} $.

3. $\rho_\varepsilon - \underline \rho \leq \varepsilon$.

Similarly, we define an approximate minimum-norm solution to the normal equation.
\end{definition}

According to Condition 2,  $x_* \not \in E^\circ_{A, \underline \rho}$.
This implies when $Ax=b$ is solvable and $\Vert x_* \Vert \leq \Vert x_\varepsilon \Vert$, then $\Vert x_\varepsilon \Vert \leq \Vert x_* \Vert + \varepsilon$.
Thus  $x_\varepsilon$ and $x_*$ are close in terms of norm. Even when $Ax=b$ is unsolvable, if $x_\varepsilon$ as described above is computable, it gives a measure of proximity  of $Ax=b$ to feasibility.

The history of linear systems is rich and extensive, as solving linear equations stands as one of the most fundamental and practical challenges in various domains of scientific computing. A vast body of literature exists on this subject, encompassing the exploration of theoretical and numerical aspects, the development of algorithms, and their applications in solving a wide array of problems. Among the many well-known books related to this field are Atkinson \cite{Atkinson}, Bini and Pan \cite{Bini}, Golub and van Loan \cite{Golub}, Higham \cite{Higham}, Saad \cite{Saad}, and Strang \cite{Strang}. For those interested in the historical perspective, Brezinski et al. \cite{Brezinski} provides valuable insights.

Iterative methods provide crucial alternatives to direct methods and find extensive applications in solving problems that involve very large or sparse linear systems. Several major references on iterative methods include Barrett et al. \cite{Bar}, Golub and van Loan \cite{Golub}, Greenbaum \cite{Green}, Hadjidimos \cite{Hadjid}, Liesen and Strakos \cite{Liesen}, Saad \cite{Saad}, Saad and Schultz \cite{saad1986gmres}, Simoncini and Szyld \cite{Szyld}, van der Vorst \cite{van1}, Varga \cite{Varga}, and Young \cite{Young}. In these methods, the primary computational effort in each iteration typically involves matrix-vector multiplication. Consequently, if $N$ denotes the number of nonzero entries in the matrix, each iteration requires $O(N)$ operations, rendering iterative methods highly appealing for solving large and sparse systems.

Additionally, the speed of convergence, memory requirements, and stability serve as critical criteria for assessing the practicality and effectiveness of iterative methods. Another important factor is their suitability for parallelization, as explored in works such as Demmel \cite{Dem}, Dongarra et al. \cite{Don}, Duff and van der Vorst \cite{Duff}, van der Vorst \cite{van1}, and van der Vorst and Chan \cite{Van2}.

The convergence rates of iterative methods can often be substantially improved through the use of {\it preconditioning}. Preconditioning not only accelerates convergence but also enhances the accuracy of approximate solutions. However, it's important to note that preconditioning introduces its own computational costs. For a symmetric positive definite matrix $A$, preconditioning can be as simple as pre and post-multiplying by the square root of the diagonal matrix, composed of the diagonal entries of $A$. This provides an approximation to the optimal diagonal scaling, with an error factor proportional to the matrix dimension, as discussed in Braatz and Morari \cite{BRAATZ94}.

A more elaborate preconditioning method for symmetric positive definite matrices is the {\it incomplete Cholesky factorization}. Preconditioning of linear systems is a sophisticated and well-developed field in its own right. Notable works on this subject, along with a rich set of references, include  Benzi \cite{benzisur}, Benzi et al. \cite{benzi3},  Benzi et al. \cite{benzi2}, and Wathen \cite{Wathen15}. While preconditioning is often recommended in practice, if the condition number of the underlying matrix is not excessively large, an iterative method may not require preconditioning. There is a trade-off between improved convergence and increased computational time when preconditioning is applied. For guidance on the usage and effective utility of preconditioning, consult \cite{benzisur}, \cite{benzi3}, \cite{benzi2}, and \cite{Wathen15}.

Conditions that ensure the convergence of iterative methods for solving square systems typically encompass properties such as symmetry and positive definiteness, and diagonal dominance of the matrix $A$. Specialized algorithms have been developed for the case when $A$ is symmetric positive definite, notably including the {\it Steepest Descent} (SD) and the {\it Conjugate Gradient method} (CG). In exact arithmetic, CG theoretically converges in $n$ iterations, where $n$ is the dimension of $A$. However, it is widely recognized that in practice, these methods do not always converge in just $n$ iterations. In practical scenarios, our objective is to obtain solutions with a specified precision level $\varepsilon$. Iteration complexity bounds are typically derived in terms of $\varepsilon$, matrix dimensions, and input parameters like the condition number when $A$ is invertible.

In the context of iterative methods, CG treats each iteration as an opportunity to reduce the induced norm of the error $e_k=x_k-x_*$, denoted as $\Vert e_k \Vert_A=(e_k^TAe_k)^{1/2}$. It accomplishes this by a factor of $({\sqrt{\kappa} -1})/({\sqrt{\kappa}+ 1})$, where $\kappa$ represents the condition number of $A$. Consequently, $x_k$ converges to $x_*$ and, as a result, the corresponding residual $r_k=A(x_*-x_k)$ converges to zero.  For the Steepest Descent method, the above-bound substitutes $\sqrt{\kappa}$ with $\kappa$, leading to a slower worst-case convergence rate.

When $A$ is invertible but not positive definite, these methods can be applied to the equivalent system $A^TAx=A^Tb$. However, a general consensus suggests that solving the normal equations can be inefficient when $A$ is poorly conditioned, as discussed in Saad \cite{Saad}. The primary reason is that the condition number of $A^TA$ is the square of the condition number of $A$. For more details and references, refer to Hestenes and Stiefel \cite{Hesten52} for CG and Shewchuk \cite{10.5555/865018} for both SD and CG.

The GCR method, along with its variants, was developed by Eisenstat et al. \cite{EES83} and is applicable to solving the linear system $Ax=b$, where $A$ is an invertible $n \times n$ nonsymmetric matrix. However, it possesses a positive definite symmetric part, namely $(A+A^T)/2$. Similar to the CG method, GCR theoretically yields the exact solution in just $n$ steps. As shown by Eisenstat et al. \cite{EES83}, the residual in two consecutive iterations of GCR diminishes by a factor of $(1-\lambda^2_{\min}(\frac{1}{2}(A+A^T))/\lambda_{\max}(A^TA))$. Notably, when $A$ is symmetric positive definite, this reduction factor for GCR simplifies to $((\kappa^2(A) -1)/\kappa^2(A))$.

When dealing with a nonsymmetric but invertible matrix $A$, the development and analysis of iterative methods for convergence become significantly more complex compared to the PSD case of $A$. One of the popular iterative methods designed for solving general nonsymmetric linear systems with an invertible coefficient matrix is the {\it Bi-Conjugate Gradient Method Stabilized} (BiCGSTAB), as introduced by van der Vorst \cite{Vorst1}. This method falls under the category of Krylov subspace methods and is known for its faster and smoother convergence when compared to the {\it Conjugate Gradient Squared Method} (CGS).

The {\it generalized minimal residual method} (GMRES) is a highly popular approach for solving invertible linear systems with nonsymmetric matrix $A$. It was initially developed by Saad and Schultz \cite{saad1986gmres} and is actually an extension of the MINRES method introduced by Paige and Saunders \cite{Paige75}. While MINRES applies specifically when the matrix $A$ is invertible, symmetric, and indefinite, GMRES convergence and its analysis only requires the invertibility of $A$. For comprehensive insights into GMRES and related topics, one can refer to Chen \cite{Chen}, Liesen and Strakos \cite{Liesen}, van der Vorst \cite{Vorst1}, Saad \cite{Saad}, Saad and Schultz \cite{saad1986gmres}, and Simoncini and Szyld \cite{Szyld}. MINRES and GMRES can be applied to singular matrices too but their convergence becomes ambiguous.

GMRES initiates with a given $x_0$ and computes an initial residual $r_0=b-Ax_0$. The $k$-th {\it Krylov subspace} for a given $k \geq 1$ is defined as $K_k(A,r_0)= {\rm span}\{r_0, Ar_0, \dots, A^{k-1}r_0\}$. GMRES, instead of using this basis, employs the {\it Arnoldi iteration} to discover an orthonormal basis for $K_k(A_,r_0)$, given the current residual $r_{k-1}=b-Ax_{k-1}$. The residual norms exhibit non-increasing behavior, and ideally, the method terminates with $r_n=0$. However, according to a theorem by Greenbaum et al. \cite{GPS96}, there is no direct link between the eigenvalues of $A$ and the sequence of residuals. In the worst-case scenario, the residuals could remain the same as the initial one, except for the final one, which becomes zero. In practice, GMRES typically executes $k$ iterations, and the resulting approximate solution then serves as the initial guess for the subsequent $k$ iterations. Some approaches for solving nonsymmetric matrices involve these truncated cases of GMRES, which can yield peculiar outcomes, as discussed in Baker et al. \cite{Baker9}, Embree \cite{Embree}, and Saad and Schultz \cite{saad1986gmres}. Furthermore, when matrix $A$ is both nonsymmetric and singular, GMRES can face challenges and may require modifications. In such scenarios, convergence bounds in the analysis become less straightforward, as highlighted in Reichel and Ye \cite{Ye2005}.

A general consensus is that solving the normal equation, $A^TAx=A^Tb$, can be an inefficient
approach in the case when $A$ is poorly conditioned.  This view may extend to
other methods that attempt to symmetrize linear systems. Nevertheless, there are scenarios where such approaches can yield benefits. For instance, in the CGNE method, instead of directly solving $Ax=b$, we tackle $AA^Tw=b$, all while working with the original variable $x$ without the explicit computation of $w$. Quoting Saad \cite{Saad}, p. 259:

{\it However, the normal equations approach may be adequate in some situations. Indeed, there are even applications in which it is preferred to the usual Krylov subspace techniques.}

Remarks on CGNE can also be found in  Golub and van Loan \cite{Golub}, Higham \cite{Higham}.

While certain iterative methods may be viewed as impractical or susceptible to numerical instability, it's widely accepted that, even among well-established methods, there is no one-size-fits-all solution. There will always be instances where a specific class of iterative methods outperforms others. It is reasonable to acknowledge that, given the paramount importance of linear equations, theoretical research on this problem will persist indefinitely. This becomes particularly apparent as the need to solve increasingly larger systems arises in various realms of scientific computing.

\subsection{Summary of Our Results}

In this article, we present innovative algorithms along with their theoretical complexity bounds for computing exact or $\varepsilon$-approximate solutions to a general real system $Ax=b$ or its associated {\it normal equation} $A^TAx=A^Tb$, where $A$ is an $m \times n$ real matrix of arbitrary rank.
These considerations also include exact or  approximate minimum-norm solutions.
The algorithms implicitly tackle the {\it normal equation of the second type}, $AA^Tw=b$, eliminating the need to deal with $w$ or computing $AA^T$ directly. Consequently, the algorithms exclusively utilize matrix-vector multiplications. When $w$ satisfies $AA^Tw=b$, it's a well-known fact that $x=A^Tw$ represents the {\it minimum-norm solution} to $Ax=b$ (see Proposition \ref{minnorm}). As a result, when $Ax=b$ is solvable, the proposed algorithms yield an exact or approximate minimum-norm solution.

For situations where $A$ is symmetric positive semidefinite (PSD), we introduce more efficient algorithm variants, although in this case they may sacrifice the computation of exact or approximate minimum-norm solutions. In this context, the algorithms calculate the minimum-norm solution to $A^{1/2}u=b$, leveraging $A^{1/2}$ implicitly. In this scenario, the algorithms deliver either exact or approximate solutions to $Ax=b$. However, we also provide an algorithm capable of computing an $\varepsilon$-approximate minimum-norm solution to $Ax=b$ (or its normal equation) from any given $\varepsilon$-approximate solution. The results are organized as follows:

Section \ref{sec2} introduces the {\it Triangle Algorithm} (TA) for solving linear systems. TA is a specialized version of an algorithm originally developed for addressing the {\it convex hull membership problem} (CHMP) \cite{kalfull, kalcon}. CHMP entails determining whether a given point $p \in \mathbb{R}^m$ belongs to a given compact convex set in $\mathbb{R}^m$, defined as the convex hull of a given compact subset of $\mathbb{R}^m$. While we only state the general complexity of the algorithm for CHMP from \cite{kalcon}, we delve into adapting it for linear systems. Subsection \ref{sec2.1} discusses the properties of TA in the context of linear systems. We then introduce Algorithm \ref{2.1}, which, given $\varepsilon$ and $\varepsilon'$ in the range (0,1), computes either an $\varepsilon$-approximate solution for $Ax=b$ or an $\varepsilon'$-approximate solution for $A^TAx=A^Tb$. By adjusting $\varepsilon'$, TA generates the desired approximation for the system $Ax=b$ if it is solvable. If $Ax=b$ is unsolvable, TA computes an $\varepsilon'$-approximate solution for the normal equation. In its basic form, with a given $\rho>0$, TA tests whether the linear system $Ax=b$ has an approximate solution $x_\varepsilon$ with a norm bounded by $\rho$. It does so by verifying whether $b$ lies within the ellipsoid $E_{A,\rho}=\{Ax: \Vert x \Vert \leq \rho\}$,  the image of a ball with radius $\rho$ centered at the origin, transformed by the linear mapping defined by $A$. In each iteration, TA either computes progressively improving approximations $b_k=Ax_k \in E_{A,\rho}$ or produces a {\it witness} point $b'$ in $E_{A,\rho}$ that demonstrates $b \not \in E_{A,\rho}$, thereby establishing $\rho$ as a lower bound on the norm of any solution.

By adjusting $\rho$, TA can compute $\varepsilon$-approximate minimum-norm solutions for $Ax=b$ or $\varepsilon'$-approximate solutions for $A^TAx=A^Tb$. If $Ax=b$ is solvable, Algorithm \ref{2.1} computes an $\varepsilon$-approximate minimum-norm solution in $O(\Vert A\Vert^2\Vert x_* \Vert^2/\varepsilon^2))$ iterations of TA, with matrix-vector multiplications being the dominant computation component in each iteration. However, if $Ax=b$ is unsolvable and $\delta_*=\Vert Ax_*- b \Vert$, Algorithm \ref{2.1} computes an $\varepsilon'$-approximate solution for $A^TAx=A^Tb$ in $O(\Vert A \Vert^2 \Vert b \Vert^4/\delta_*^2 \varepsilon'^2)$ iterations, each dominated by the complexity of a matrix-vector multiplication.
Subsection \ref{sec2.2} introduces Algorithm \ref{2.2}, a nontrivial modification of Algorithm \ref{2.1}, tailored for cases where $A$ is PSD. This variant offers more efficient complexity per iteration. Finally, Subsection \ref{sec2.3} outlines Algorithm \ref{2.3}, which computes an $\varepsilon$-approximate minimum-norm solution from a given $\varepsilon$-approximate solution to a linear system. Notably, this algorithm can be employed to compute approximate minimum-norm solutions for the normal equation.

Section \ref{sec3} introduces the first-order {\it Centering Triangle Algorithm} (CTA), a method where residuals $r_k$ are generated using the iteration formula:
$F_1(r)=r-({r^THr}/{r^TH^2r})Hr$,
where $H=AA^T$. However, when $A$ is symmetric PSD, $H$ can be taken to be $A$ itself. The formal algorithm is detailed in Algorithm \ref{3.1}.

Subsection \ref{sec3.1} presents CTA as a geometric algorithm for solving linear systems. Subsection \ref{sec3.2} explores CTA from an algebraic perspective. Subsection \ref{sec3.3} interprets the iteration formula when $H=A$. Subsection \ref{sec3.4} describes CTA as a dynamic variant of TA, where each iterate $b_k=Ax_k$ becomes the center of a new ellipsoid with an appropriate $\rho_k$, and the corresponding residual $r_k=b - Ax_k$ is generated through the iterations of $F_1(r)$. This subsection establishes a strong connection between TA and CTA.

Subsection \ref{sec3.5} proves an essential auxiliary lemma on PSD matrices, which has independent significance. This lemma serves as the basis for analyzing the convergence of CTA and later becomes crucial for the convergence analysis of the more general case of CTA.

Subsection \ref{sec3.6} explores the relationship between the magnitudes of residuals in two consecutive iterations of $F_1(r)$. Subsection \ref{sec3.7} delves into the convergence properties of CTA.

Letting $\kappa^+ \equiv \kappa^+(H)$ represent the ratio of the largest to the smallest positive eigenvalues of $H$, we establish that when $Ax=b$ is solvable, starting with $r_0=b-Ax_0$, where $x_0=A^Tw_0 \in \mathbb{R}^n$ and $w_0 \in \mathbb{R}^m$, arbitrary, CTA converges in $k=O( \kappa^+ \ln \varepsilon^{-1})$ iterations of $F_1$ to achieve $\Vert r_k \Vert \leq \varepsilon$. Moreover, when $H=AA^T$, the corresponding $x_k$ becomes an $\varepsilon$-approximate minimum-norm solution.

When $H=AA^T$ is invertible, the sequence of $x_k$'s converges to the minimum-norm solution. Regardless of the consistency of $Ax=b$, in $k =O(\kappa^+/\varepsilon)$ iterations, CTA produces $b_k=Ax_k$ such that $\Vert A^T(b-b_k) \Vert = O(\sqrt{\varepsilon})$. As the normal equation is always consistent, employing the above-mentioned complexity result  directly to normal equation itself, CTA computes an $\varepsilon$-approximate least-squares solution in $O( \kappa^+(A^TA) \ln \varepsilon^{-1})$ iterations.

The above complexity results imply that when dealing with arbitrary linear systems $Ax=b$, without any knowledge of its consistency, applying CTA to $Ax=b$ is a useful first step. Then, if $r_k$ shows little improvement over several iterations, we apply CTA to the normal equation, starting with $A^Tr_k$. The use of $AA^T$ or $A^TA$ is implicit, and the operation count per iteration is $O(N)$, where $N$ represents the number of nonzero entries in $A$. These factors collectively establish the robustness of CTA via iterations of $F_1(r)$ in solving linear systems or their normal equations.

Section \ref{sec4} extends the iteration formula from $F_1(r)$ to {\it high-order} Centering Triangle Algorithms, where $F_t(r)=r- \sum_{i=1}^t \alpha_{t,i}(r) H^i r$ for $t=1, \dots, m$. The coefficients are selected to minimize the norm of $F_t(r)$. Iterations of $F_t$ correspond to a Krylov subspace method with restart.
To analyze this family of algorithms, we need to examine the minimal polynomial of the initial residual, $r_0$.

Subsection \ref{sec4.1} provides an algebraic definition of the CTA family and introduces an associated $t \times t$ linear system called the {\it auxiliary system}. Solving this system yields the coefficients of $F_t(r)$. Subsection \ref{sec4.2} investigates the relationship between the magnitudes of residuals in two consecutive iterations of the high-order CTA.

Subsection \ref{sec4.3} delves into the convergence properties of the high-order CTA. Due to its generality, the proof is more intricate than in the case of $t=1$. The subsection establishes more general complexity bounds, demonstrating that to achieve a given precision, the number of iterations of $F_t$ is at most $1/t$ times those of $F_1$. Additionally, it establishes a collective property of the CTA family iterations: given arbitrary $r_0=b-Ax_0$, evaluating the sequence $\{F_t(r_0)\}_{t=1}^m$ will yield an exact solution to $Ax=b$ or its normal equation. Specifically, if $s$ denotes the degree of the minimal polynomial of $r_0$ with respect to $H$, then $Ax=b$ is solvable if and only if $F{s}(r_0)=0$. Furthermore, exclusively $A^TAx=A^Tb$ is solvable if and only if $F_{s}(r_0) \not= 0$ and $A^T F_s(r_0)=0$. Moreover, the {\it point-wise orbit} $\{F_t(r_0)\}_{t=1}^s$ is computable in $O(Ns+s^3)$ operations. Therefore, without prior knowledge of $s$, evaluating the orbit solves either $Ax=b$ or $A^TAx=b$.

Subsection \ref{sec4.4} formally presents two algorithms based on the CTA family: Algorithm \ref{4.1} relies on iterations of $F_t$ for a particular $t$, while Algorithm \ref{4.2} utilizes the point-wise orbit. Subsection \ref{sec4.5} derives the iterates of CTA as applied to the normal equation and indicates their differences with the iterates of CTA as applied to $Ax=b$. Subsection \ref{sec4.6} provides an analysis of the space-time complexity of Algorithms \ref{4.1} and \ref{4.2}. Lastly, Subsection \ref{sec4.7} presents sample computational results obtained using CTA, Algorithms \ref{4.1} and \ref{4.2}, albeit with small values of $t$. The subsection also includes comparisons with CG and GMRES. We end with final remark.

The proposed methods serve as versatile algorithms for solving linear systems and their associated normal equations. They offer novel theoretical complexity bounds applicable to linear systems with arbitrary matrix coefficients. Additionally, these algorithms are straightforward to implement and robust, making them suitable for solving general linear systems, even without any preconditioning. However, when preconditioning the matrix is applicable and beneficial, they can be applied prior to executing CTA or TA.

In a separate article, we conduct a comprehensive computational study to assess the performance of these methods across a wide range of square and rectangular matrices. The study includes comparisons with state-of-the-art algorithms in the field.

\section{Triangle Algorithm (TA): A Convex Hull Membership Algorithm} \label{sec2}
In this section, we introduce the {\it Triangle Algorithm} (TA) for addressing the approximation problems detailed in Definitions \ref{deffirst} and \ref{minnormapprx}. TA is a specialized algorithm originating from a broader algorithm designed to tackle the {\it convex hull membership problem} (CHMP):

Given a compact subset $S$ in $\mathbb{R}^m$, a specific point $p \in \mathbb{R}^m$, and a user-defined parameter $\varepsilon \in (0, 1)$, the CHMP involves either computing a point $p' \in C=conv(S)$, the convex hull of $S$, such that $\Vert p' - p \Vert \leq \varepsilon$, or determining a hyperplane capable of separating $p$ from $C$.

The computational complexity per iteration of the Triangle Algorithm depends on the characteristics of the convex set $C$ being examined. In the worst-case scenario, the primary computational task in each iteration revolves around solving a linear programming problem over $C$. While this article primarily concentrates on the utilization of TA for solving linear systems, we will use the iteration complexity theorem stated for the general convex set $C$:

\begin{theorem} \label{thmzero} {\rm(\cite{kalcon})} Let $\rho_*$ be the diameter of $C$, $\varepsilon \in (0,1)$. Then

(i) If there exits $p' \in C$ such that $\|p' - p\| \leq \varepsilon$, the number of iterations of the Triangle Algorithm to find such a point is $O(\rho_*^2/\varepsilon^2)$.

(ii) If the distance between $p$ and $C$ is $\delta_* >0$, the number of iterations of the Triangle Algorithm to identify a hyperplane that separates $p$ from $C$  is $O(\rho_*^2/\delta_*^2)$. \qed
\end{theorem}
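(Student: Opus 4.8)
The plan is to reduce both parts to a single per-iteration distance estimate and then iterate it. Recall that at a generic step TA maintains an iterate $p_k \in C$ together with an explicit representation of $p_k$ as a convex combination of points of $S$, and it searches for a \emph{pivot}: a point $v \in S$ with $\|p_k - v\| \ge \|p - v\|$. If a pivot is found, TA sets $p_{k+1}$ equal to the point of the segment $[p_k, v]$ closest to $p$; if no pivot exists, $p_k$ is declared a \emph{witness} and TA stops.

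First I would isolate the geometry of one pivot step. Writing $\delta_k = \|p_k - p\|$, expanding the pivot inequality $\|p_k - v\|^2 \ge \|p - v\|^2$ gives $\langle p_k - p,\, p_k - v\rangle \ge \tfrac12 \delta_k^2$, and minimizing the quadratic $\theta \mapsto \|(1-\theta)p_k + \theta v - p\|^2$ over $[0,1]$ yields $\delta_{k+1}^2 \le \delta_k^2 - \langle p_k - p,\, p_k - v\rangle^2 / \|p_k - v\|^2$ in the interior case, while the boundary case $p_{k+1} = v$ can be checked to give an even sharper drop (one gets $\delta_{k+1}^2 < \tfrac12 \delta_k^2$ there). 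Since $p_k, v \in C$ and $C$ has diameter $\rho_*$, we have $\|p_k - v\| \le \rho_*$, so the cases combine into the clean recursion $\delta_{k+1}^2 \le \delta_k^2 - \delta_k^4/(4\rho_*^2)$. Passing to reciprocals $u_k = 1/\delta_k^2$ turns this into $u_{k+1} \ge u_k + 1/(4\rho_*^2)$, whence $u_k \ge k/(4\rho_*^2)$, i.e. $\delta_k^2 \le 4\rho_*^2/k$.

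To finish part (i): under the hypothesis that some point of $C$ lies within $\varepsilon$ of $p$, a no-pivot situation is impossible while $\delta_k > \varepsilon$, since a witness would produce a hyperplane separating $p$ from $C$, contradicting that hypothesis; hence the recursion runs until $\delta_k \le \varepsilon$, which by $\delta_k^2 \le 4\rho_*^2/k$ occurs within $k = O(\rho_*^2/\varepsilon^2)$ iterations. For part (ii): every $p_k$ lies in $C$, so $\delta_k \ge \delta_* > 0$ at all times, and the bound $\delta_k^2 \le 4\rho_*^2/k$ then forces TA to stop finding pivots before $k = 4\rho_*^2/\delta_*^2$; the resulting witness $p_k$ separates $p$ from $C$ through the hyperplane $\{x : \langle p - p_k,\, x\rangle = \tfrac12(\|p\|^2 - \|p_k\|^2)\}$, where linearity of the inner product over $C = \mathrm{conv}(S)$ upgrades the ``no pivot at any point of $S$'' inequalities to a strict separation of all of $C$ from $p$.

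The step I expect to need the most care is establishing the uniform one-step decrease $\delta_{k+1}^2 \le \delta_k^2 - \delta_k^4/(4\rho_*^2)$: one must verify it in the boundary case of the line search (where it in fact only improves), confirm that the maintained convex-combination certificate for $p_{k+1}$ stays valid, and, for part (ii), rule out the possibility that the iterates simply hover just above $\delta_*$ indefinitely — which is precisely what the reciprocal potential $u_k$ precludes. As the statement is quoted from \cite{kalcon}, I would present the above as the proof outline and defer the remaining bookkeeping to that reference.
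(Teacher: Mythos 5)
The paper does not actually prove Theorem~\ref{thmzero}; it states the bound and cites it to \cite{kalcon}, then uses it as a black box in the proof of Theorem~\ref{thm2pTA}. So there is no internal proof to compare against, and I assess your sketch on its own merits. Your per-iteration potential argument is the standard one for this class of algorithms and the algebra checks out: expanding the pivot inequality $\|p_k - v\|^2 \ge \|p - v\|^2$ does give $\langle p_k - p,\ p_k - v\rangle \ge \tfrac12\delta_k^2$, the interior line search gives $\delta_{k+1}^2 \le \delta_k^2 - \langle p_k - p,\ p_k - v\rangle^2/\|p_k - v\|^2 \le \delta_k^2 - \delta_k^4/(4\rho_*^2)$, and the reciprocal potential $u_k = 1/\delta_k^2$ then accumulates at rate $1/(4\rho_*^2)$ per step. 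One small point: your claim that the boundary case ($\theta^*>1$, so $p_{k+1}=v$) ``combines'' into the same recursion requires $\delta_k^2 \le 2\rho_*^2$; this holds for every $k \ge 1$ because $\|p_1 - p\| \le \|v - p\| \le \|p_0 - v\| \le \rho_*$ and inductively $\delta_k \le \rho_*$ thereafter, but for $k=0$ with $p_0$ far from $p$ the boundary bound $\delta_1^2 < \tfrac12\delta_0^2$ is what you actually rely on. Worth one sentence.

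The genuine gap is in your reduction for part (i). You argue that while $\delta_k > \varepsilon$ no witness can occur because ``a witness would produce a hyperplane separating $p$ from $C$, contradicting that hypothesis.'' But the hypothesis of (i) is only that some $p' \in C$ has $\|p' - p\| \le \varepsilon$; it does \emph{not} assert $p \in C$. A separating hyperplane is perfectly consistent with $p$ lying just outside $C$ at distance $\le \varepsilon$, so a witness does not, by itself, contradict the hypothesis. The correct step is quantitative: a witness $p_k$ certifies (via the orthogonal bisector of $[p, p_k]$, cf.\ Proposition~1 and Theorem~\ref{thm1TA} of this paper) that $\operatorname{dist}(p, C) \ge \tfrac12\|p - p_k\| = \tfrac12\delta_k$. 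Under the hypothesis $\operatorname{dist}(p, C) \le \varepsilon$, this forces $\delta_k \le 2\varepsilon$ at any witness. Hence as long as $\delta_k > 2\varepsilon$ a pivot is guaranteed, your recursion runs, and $\delta_k^2 \le 4\rho_*^2/k$ drives $\delta_k$ below $O(\varepsilon)$ in $O(\rho_*^2/\varepsilon^2)$ iterations. This is the same asymptotic bound, but the logical bridge from ``witness'' to ``contradiction'' needs the $\tfrac12\delta_k$ distance lemma rather than the bare fact that a separating hyperplane exists. Part (ii) as you wrote it is fine: $\delta_k \ge \delta_* > 0$ for all $k$, so the potential bound caps the number of successful pivot steps at $O(\rho_*^2/\delta_*^2)$, after which a witness must appear, and linearity of $v \mapsto \langle p - p_k, v\rangle$ over $C = \mathrm{conv}(S)$ upgrades no-pivot-over-$S$ to strict separation of $C$.
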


For details on the Triangle Algorithm for CHMP the reader may consult \cite{kalfull, KalY22, kalcon}.  These also include  extensions and  applications of the Triangle Algorithm in optimization. For applications of the Triangle Algorithm in computational geometry and machine learning, see \cite{AKZ}.

\subsection{Triangle Algorithm for Solving Linear Systems}
\label{sec2.1}
In the context of solving a linear system $Ax=b$, in the Triangle Algorithm the compact convex set $C$ corresponds to the ellipsoid $E_{A, \rho} = \{ Ax: \Vert x \Vert \leq \rho \}$. In this case, the point $p$ is simply $b$. This section outlines several properties of the general Triangle Algorithm, specifically tailored to determine if $b \in E_{A, \rho}$ within a tolerance of $\varepsilon$. Additionally, it introduces variations of this algorithm. While a preliminary version of the Triangle Algorithm for linear systems was presented in \cite{kallauzhang}, the results provided here are significantly more comprehensive and refined. The algorithms detailed in this section are designed to effectively address the specified approximation problems.

\begin{definition} \label{defpTA} Given $\rho >0$ and $x' \in \mathbb{R}^n$, if $b'=Ax' \in E_{A, \rho}$, such that $b' \not =b$, we refer to $b'$ as an {\it iterate}. A point $v \in E_{A, \rho}$ is defined as {\it pivot} at $b'$ (or simply a pivot) if it satisfies the equivalent conditions given by the inequalities:
\begin{equation} \label{pivotTA}
\|b' - v\| \geq \|b - v\| \quad \iff \quad
(b-b')^Tv \geq \frac{1}{2} \big (\Vert b \Vert^2 - \Vert b' \Vert^2 \big).
\end{equation}
 A pivot $v$ is called a {\it strict pivot} if
\begin{equation} \label{pivotsTA}
(b-b')^T(v-b) \geq 0.
\end{equation}
Notably, when all three points $b, b', v$ are distinct, the angle $\angle b'bv$ is at least $\pi/2$ (refer to Figure \ref{AAD}).
If no pivot exists at $b'$, we refer to $b'$ as a $b$-{\it witness} (or simply a {\it witness}).
\end{definition}

\begin{proposition}
An iterate $b' \in E_{A, \rho}$ is a witness if and only if the orthogonal bisecting hyperplane to the line segment between $b$ and $b'$ separates $b$ from $E_{A, \rho}$, thus indicating that $b$ does not belong to $E_{A, \rho}$. \qed
\end{proposition}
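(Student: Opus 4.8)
The plan is to prove both directions of the equivalence directly from Definition \ref{defpTA}, unwinding what it means for $b'$ to fail to have any pivot and translating that into a separation statement about the orthogonal bisector of the segment $[b,b']$.

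First I would set up notation: let $H_{b,b'}$ denote the hyperplane that orthogonally bisects the segment joining $b$ and $b'$, i.e. the set of points $v$ with $\|b'-v\| = \|b-v\|$, equivalently $(b-b')^Tv = \tfrac12(\|b\|^2 - \|b'\|^2)$. Points on the ``$b'$ side'' are those with $\|b'-v\| \geq \|b-v\|$, which by the algebraic identity in \eqref{pivotTA} is exactly $(b-b')^Tv \geq \tfrac12(\|b\|^2-\|b'\|^2)$; this is precisely the condition that $v$ be a pivot at $b'$. Note also that $b'$ itself lies strictly on its own side (since $b'\neq b$ gives $\|b'-b'\|=0 < \|b-b'\|$), so $b' \in E_{A,\rho}$ is always on the pivot side. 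Hence: $b'$ is a witness $\iff$ no point of $E_{A,\rho}$ lies on the closed $b'$-side of $H_{b,b'}$ $\iff$ every point of $E_{A,\rho}$ lies strictly on the $b$-side, i.e. satisfies $(b-b')^Tv < \tfrac12(\|b\|^2-\|b'\|^2)$.

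For the forward direction, suppose $b'$ is a witness. Then by the above, for all $v \in E_{A,\rho}$ we have $(b-b')^Tv < \tfrac12(\|b\|^2-\|b'\|^2)$, while $b$ itself satisfies $(b-b')^Tb = \|b\|^2 - b'^Tb \geq \tfrac12(\|b\|^2 - \|b'\|^2)$ because $\|b\|^2 - 2b'^Tb + \|b'\|^2 = \|b-b'\|^2 \geq 0$. So $H_{b,b'}$ (or a parallel translate of it) strictly separates $b$ from $E_{A,\rho}$, and in particular $b \notin E_{A,\rho}$. Conversely, if the orthogonal bisecting hyperplane separates $b$ from $E_{A,\rho}$, then $E_{A,\rho}$ lies entirely on one side; since $b$ is on the $b$-side and the sets are separated, $E_{A,\rho}$ must lie on the (open) $b$-side, so no $v \in E_{A,\rho}$ satisfies the pivot inequality, and $b'$ is a witness by definition. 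The inclusion $b \notin E_{A,\rho}$ is then immediate from the separation.

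I expect the only genuine subtlety — and the main thing to handle carefully — is the strict-versus-non-strict boundary case: one must argue that if $b'$ is a witness no point of $E_{A,\rho}$ can lie exactly on $H_{b,b'}$ (such a point would be a pivot with equality), so the separation is strict, and conversely that the separating hyperplane asserted in the statement can be taken to be exactly the orthogonal bisector rather than merely some parallel hyperplane. This is a short compactness/convexity argument (the pivot inequality is closed, and $b'$ being in $E_{A,\rho}$ on the pivot side forces consistency), and once it is dispatched the rest is the bookkeeping of the identity in \eqref{pivotTA}, which is already supplied.
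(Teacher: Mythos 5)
The paper gives no proof: the proposition is stated with a terminal \qed\ and treated as immediate from Definition~\ref{defpTA}. Your plan---convert the pivot inequality into membership in a half-space, observe that $b$ itself lies strictly inside the pivot half-space, and conclude that the bisecting hyperplane separates $b$ from $E_{A,\rho}$ exactly when $E_{A,\rho}$ avoids that half-space---is the right argument, and it needs nothing beyond the algebraic identity in~(\ref{pivotTA}).

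The side-labeling, however, is reversed, and this produces a false intermediate claim. You define the ``$b'$ side'' to be the pivot half-space $\{v:\|b'-v\|\ge\|b-v\|\}$, but this is the half-space geometrically nearer to $b$, and $b'$ is not in it. You then assert ``$b'$ itself lies strictly on its own side \dots\ so $b'\in E_{A,\rho}$ is always on the pivot side,'' but your own parenthetical $\|b'-b'\|=0<\|b-b'\|$ shows that $b'$ strictly \emph{fails} the pivot inequality and is never a pivot at itself---the inequality points the wrong way for your conclusion. (If the claim were true, every $b'\in E_{A,\rho}$ would be a pivot, and witnesses could not exist.) The same confusion infects the converse: ``since $b$ is on the $b$-side and the sets are separated, $E_{A,\rho}$ must lie on the (open) $b$-side'' puts the two separated sets on the \emph{same} side; the correct statement is that $b$ lies strictly in the pivot half-space, hence $E_{A,\rho}$ lies in the complementary open half-space where the pivot inequality fails, hence no pivot exists. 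The conclusion you reach is right, but the supporting sentence as written is internally contradictory. Two smaller points: the forward direction produces $H_{b,b'}$ itself as the separating hyperplane, so ``or a parallel translate'' is unnecessary, and no compactness or convexity is needed anywhere; the only subtlety is the one you correctly flag at the end, that a point of $E_{A,\rho}$ landing exactly on $H_{b,b'}$ is a (non-strict) pivot. With the side labels corrected, the proof goes through.
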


TA operates as follows: Given $\varepsilon \in (0,1)$, $\rho >0$ and  $x' \in \mathbb{R}^n$ such that $b'=Ax' \in E_{A, \rho}$, it first checks if $\Vert b - b' \Vert \leq \varepsilon$. If this condition is met, TA terminates.
If $b'$ is a witness, then $b \not \in E_{A, \rho}$. Otherwise, TA proceeds to compute a pivot $v_\rho=A x_\rho$ and the next iterate $b''$ as the nearest point of $b$ on the line segment between $b'$ and $v_\rho$ (see Figure \ref{AAD}).
Due to the convexity of $E_{A, \rho}$, $b''$ represents a point within the ellipsoid that is getting closer to $b$ compared to $b'$. Specifically,
\begin{proposition} \label{prop1TA} If $v_\rho=A x_\rho$ is a strict pivot,
\begin{equation} \label{alph}
    b'' = (1 - \alpha)b' + \alpha v_\rho, \quad  x'' = (1 - \alpha)x' + \alpha x_\rho, \quad  \alpha = {(b - b')^T(v_\rho - b')}/{\|v_\rho - b'\|^2}. \qed
\end{equation}
\end{proposition}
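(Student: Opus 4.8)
The plan is to treat the defining property of $b''$---that it is the nearest point of $b$ on the segment $[b',v_\rho]$---as a one-dimensional least-squares problem, and then to check that its unconstrained optimum already lies in $[0,1]$, so that $b''$ is an interior convex combination rather than an endpoint. First I would note that a strict pivot $v_\rho$ is necessarily distinct from $b'$: if $v_\rho=b'$, then \eqref{pivotsTA} would read $-\Vert b-b'\Vert^2\ge 0$, which is impossible since $b\neq b'$ (TA terminates before computing a pivot whenever $\Vert b-b'\Vert\le\varepsilon$). Parametrizing the segment by $\ell(s)=b'+s(v_\rho-b')$, the scalar function $\phi(s)=\Vert\ell(s)-b\Vert^2=\Vert b-b'\Vert^2-2s\,(b-b')^T(v_\rho-b')+s^2\Vert v_\rho-b'\Vert^2$ is a strictly convex quadratic (its leading coefficient $\Vert v_\rho-b'\Vert^2$ is positive), with unique stationary point $s=\alpha=(b-b')^T(v_\rho-b')/\Vert v_\rho-b'\Vert^2$.

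The crux is to show $\alpha\in(0,1]$, and this is exactly where the \emph{strict} pivot hypothesis enters. Put $w=b-b'$ and $u=v_\rho-b'$, so that $\alpha=w^Tu/\Vert u\Vert^2$. Inequality \eqref{pivotsTA} says $w^T(u-w)\ge 0$, i.e. $w^Tu\ge\Vert w\Vert^2$; since $b\neq b'$ this gives $w^Tu\ge\Vert w\Vert^2>0$, hence $\alpha>0$. For the upper bound, suppose toward a contradiction that $\alpha>1$, i.e. $w^Tu>\Vert u\Vert^2$. By Cauchy--Schwarz, $\Vert u\Vert^2<w^Tu\le\Vert w\Vert\,\Vert u\Vert$, so $\Vert u\Vert<\Vert w\Vert$; but $\Vert w\Vert^2\le w^Tu\le\Vert w\Vert\,\Vert u\Vert$ forces $\Vert w\Vert\le\Vert u\Vert$, a contradiction. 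Therefore $\alpha\le 1$, so the minimizer of $\phi$ over $s\in[0,1]$ is the stationary point $\alpha$ itself, and $b''=\ell(\alpha)=(1-\alpha)b'+\alpha v_\rho$.

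It remains to produce $x''$. Setting $x''=(1-\alpha)x'+\alpha x_\rho$ and using $b'=Ax'$, $v_\rho=Ax_\rho$ together with linearity of $A$ yields $Ax''=(1-\alpha)b'+\alpha v_\rho=b''$; moreover $\Vert x''\Vert\le(1-\alpha)\Vert x'\Vert+\alpha\Vert x_\rho\Vert\le\rho$ because $\alpha\in[0,1]$ and $\Vert x'\Vert,\Vert x_\rho\Vert\le\rho$, so $b''=Ax''\in E_{A,\rho}$ is a bona fide next iterate. (Evaluating $\phi$ at $\alpha$ also records the strict decrease $\Vert b''-b\Vert^2=\Vert b-b'\Vert^2-\alpha^2\Vert v_\rho-b'\Vert^2<\Vert b-b'\Vert^2$ mentioned in the surrounding discussion.) The only step beyond routine calculus is the bound $\alpha\le 1$; I expect the short Cauchy--Schwarz argument above to be the only real obstacle, and it is precisely the point at which "pivot" must be upgraded to "strict pivot" for the update to remain on the segment.
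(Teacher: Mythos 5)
Your proof is correct and complete. The paper itself gives no argument for this proposition---it is stated with a bare \verb|\qed|, treated as self-evident once one knows that $b''$ is defined as the nearest point to $b$ on the segment $[b', v_\rho]$. So there is no ``paper proof'' to compare against; what you have done is to supply the routine calculus that the paper elides, and your execution is right. The one genuinely non-routine observation is the bound $\alpha \in (0,1]$: the algorithm computes the \emph{unconstrained} stationary point $\alpha = (b-b')^T(v_\rho - b')/\Vert v_\rho - b'\Vert^2$ and uses it directly without clamping, so for the formula to actually produce the nearest point \emph{on the segment} one needs to know a priori that this stationary point lands in $[0,1]$. Your argument---that the strict pivot inequality gives $w^Tu \ge \Vert w\Vert^2 > 0$ (hence $\alpha>0$) and that combining $w^Tu \ge \Vert w\Vert^2$ with $w^Tu > \Vert u\Vert^2$ forces the contradictory pair $\Vert w\Vert \le \Vert u\Vert$ and $\Vert u\Vert < \Vert w\Vert$ via Cauchy--Schwarz (hence $\alpha\le 1$)---is exactly the right way to make this precise, and it correctly isolates why the \emph{strict} pivot hypothesis, not just the pivot hypothesis, is what keeps the update on the segment. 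The concluding checks that $u = v_\rho - b' \neq 0$ (so $\alpha$ is well defined) and that $x'' \in \{x : \Vert x\Vert \le \rho\}$ so $b'' \in E_{A,\rho}$ are both correct and worth having.
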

TA replaces $b'$ with $b''$ and $x'$ with $x''$. It then proceeds to repeat the aforementioned iteration. The correctness of TA is guaranteed by the following theorem, the general case of which is proved in \cite{kalcon}.

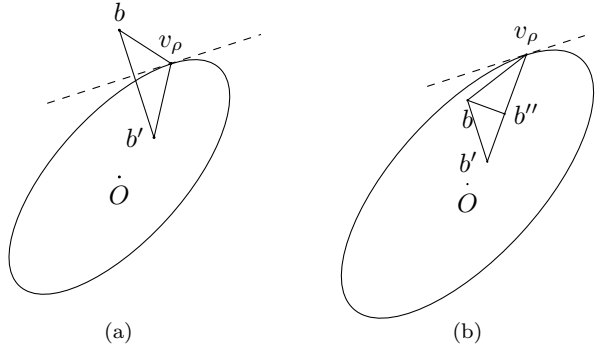
\begin{figure}[htpb]
	\centering
	\subfloat[] {
	\raisebox{2ex}{
	\begin{tikzpicture}[scale=.7]
	\begin{scope}
	\begin{scope}
	[rotate=47.5829]
	\draw \boundellipse{(0,0}{2.810}{1.210};
	\filldraw (0,0) circle (.5pt) node[below] {$O$};
	\end{scope}
	\filldraw (0,2.79) circle (.5pt)  node[above] {$b$};
	\filldraw (0.9815,2.158)  circle (.5pt)  node[above] {$v_\rho$};
	\filldraw (0.6580,0.75)  circle (.5pt)  node[left] {$b'$};
	\draw[dashed] (-1.37,1.4)--(2.69,2.709);
	\draw (0.6580,0.75)--(0.9815,2.158)--(0,2.79)--(0.6580,0.75);
	\end{scope}
	\label{1st}
	\end{tikzpicture}}}
	\qquad
	\subfloat[] {
	\begin{tikzpicture}[scale=.4]
	\begin{scope}
	\begin{scope}
	[rotate=47.5829]
	\draw  \boundellipse{(0,0}{5.620}{2.420};
	\filldraw (0,0) circle (.5pt) node[below] {$O$};
	\end{scope}
	\filldraw (0,2.79) circle (.5pt)  node[below] {$b$};
	\filldraw (1.963,4.315)  circle (.5pt)  node[above] {$v_\rho$};
	\filldraw (1.234,2.336)  circle (.5pt)  node[right] {$b''$};
	\filldraw (0.6580,0.75)  circle (.5pt)  node[left] {$b'$};
	\draw[dashed] (-1.34,3.25)--(3.9,4.94);
	\draw (1.963,4.315)--(1.234,2.336)--(0.6580,0.75)--(0,2.79)--(1.963,4.315);
	\draw (0,2.79)--(1.234,2.336);
	\end{scope}
	
	\end{tikzpicture}
	\label{2nd}
	}
	
	\caption{In Figure \ref{1st}, $v_\rho$ is not a strict pivot, proving $b$ is exterior to the ellipsoid.
In Figure \ref{2nd},  $b'$ admits a pivot, $v_\rho$, used to compute the next iterate $b''$  as the nearest point to $b$ on the line segment $b'v_\rho$. Geometrically, $v_\rho$ is found by moving the orthogonal hyperplane to line $bb'$ (dashed line) in the direction from $b'$ to $b$ until the hyperplane is tangential to the ellipsoid.}
	\label{AAD}
\end{figure}

\begin{theorem} \label{thm1TA} {\rm {(Distance Duality)}}
$b$ belongs to $E_{A, \rho}$ if and only if, for every $b' \in E_{A, \rho}$ (excluding $b$ itself), there exists a (strict) pivot $v \in E_{A, \rho}$. Conversely, $b$ does not belong to $E_{A, \rho}$ if and only if there exists a witness $b' \in E_{A, \rho}$. \qed
\end{theorem}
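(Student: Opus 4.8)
The plan is to establish the Distance Duality directly for the set $C = E_{A,\rho}$, which is compact and convex because it is the image under the linear map $A$ of the closed ball $\{x \in \mathbb{R}^n : \Vert x \Vert \le \rho\}$ (the assertion for a general compact convex set $C$ is the one cited from \cite{kalcon}). I will reduce both ``if and only if'' statements to a short cycle of three implications, only one of which requires any real work.

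The easy implication is: if $b \in E_{A,\rho}$, then every iterate admits a strict pivot. Given an iterate $b' \in E_{A,\rho}$ (so $b' \ne b$ by Definition \ref{defpTA}), take $v = b$, which lies in $E_{A,\rho}$ by hypothesis. Then $\Vert b' - v \Vert = \Vert b' - b \Vert > 0 = \Vert b - v \Vert$, so $v$ satisfies \eqref{pivotTA}, and $(b-b')^T(v-b) = 0 \ge 0$, so $v$ satisfies \eqref{pivotsTA}; hence $v = b$ is a strict pivot, in particular a pivot. This yields ``$b \in E_{A,\rho}$'' $\Rightarrow$ ``every iterate has a strict pivot'' $\Rightarrow$ ``every iterate has a pivot''.

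To close the cycle I will prove the contrapositive of ``every iterate has a pivot'' $\Rightarrow$ ``$b \in E_{A,\rho}$'': namely, if $b \notin E_{A,\rho}$, then some iterate is a witness. Let $b'$ be the unique point of $E_{A,\rho}$ nearest to $b$; since $b \notin E_{A,\rho}$ we have $b' \ne b$, and $b' = Ax'$ with $\Vert x' \Vert \le \rho$, so $b'$ is a genuine iterate. The variational inequality for projection onto a closed convex set gives $(b-b')^T(v-b') \le 0$, i.e.\ $(b-b')^T v \le (b-b')^T b'$, for every $v \in E_{A,\rho}$. Suppose, for contradiction, that some $v \in E_{A,\rho}$ were a pivot at $b'$. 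Writing \eqref{pivotTA} in the equivalent form $(b-b')^T v \ge \tfrac{1}{2}(\Vert b \Vert^2 - \Vert b' \Vert^2) = \tfrac{1}{2}(b-b')^T(b+b')$ and combining with the projection inequality gives $\tfrac{1}{2}(b-b')^T(b+b') \le (b-b')^T b'$, which reduces to $\Vert b - b' \Vert^2 \le 0$ --- impossible since $b \ne b'$. Hence $b'$ has no pivot, i.e.\ it is a witness.

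The three implications ``$b \in E_{A,\rho}$'' $\Rightarrow$ ``every iterate has a strict pivot'' $\Rightarrow$ ``every iterate has a pivot'' $\Rightarrow$ ``$b \in E_{A,\rho}$'' prove the first equivalence (the parenthetical ``(strict)'' being absorbed into the cycle), and the second statement is simply its negation: ``there exists a witness'' means ``not every iterate has a pivot'', which by the first equivalence is precisely ``$b \notin E_{A,\rho}$''. The only content is the projection step, and the only minor obstacle is the degenerate, rank-deficient case: one must observe that $E_{A,\rho}$ is genuinely closed (a continuous image of a compact set), so the nearest point $b'$ exists and is unique, and that this $b'$, being a member of $E_{A,\rho}$, really is of the form $Ax'$ with $\Vert x' \Vert \le \rho$ and hence qualifies as an iterate and a witness in the sense of Definition \ref{defpTA}.
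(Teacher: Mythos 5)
Your proof is correct, and it is in fact self-contained where the paper is not: the paper simply states the theorem with \verb|\qed| and points to the general convex-hull membership result in \cite{kalcon}, offering no local argument. Your cycle of three implications is sound. The easy implication (take $v=b$ as the strict pivot) is exactly right; the substance is the contrapositive via the projection variational inequality. The algebra closes cleanly: combining $(b-b')^Tv \le (b-b')^T b'$ (projection) with $(b-b')^T v \ge \tfrac12(b-b')^T(b+b')$ (pivot) gives $\tfrac12(b-b')^T(b+b') - (b-b')^T b' = \tfrac12\|b-b'\|^2 \le 0$, a contradiction. Two points worth noting. First, your argument never uses that $E_{A,\rho}$ is an ellipsoid: it only uses that $E_{A,\rho}$ is nonempty, closed, and convex, so you have in effect reproved the general Distance Duality theorem of \cite{kalcon} rather than merely the special case, which makes the appeal to the reference unnecessary. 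Second, the ``(strict)'' parenthetical is handled correctly --- the pivot $v=b$ produced in the forward direction is strict, and the contrapositive shows that even the weaker non-strict pivot fails to exist at the projection point, so the equivalence holds with either reading. The second sentence of the theorem is, as you say, simply the negation of the first.
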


To complete the description of TA, we need to determine whether a (strict) pivot exists and, if so, how to compute it. If not, we need to compute a witness. This is achieved by computing:
\begin{equation} \label{pivottestTA}
v_\rho= {\rm argmax}\{c^Tx : Ax \in E_{A, \rho}\}, \quad c = A^T(b - b').
\end{equation}
Geometrically, to find $v_\rho$, consider the orthogonal hyperplane to the line segment $bb'$ and move it from $b'$ toward $b$ until it becomes tangential to the boundary of $E_{A, \rho}$. For an illustration, refer to Figure \ref{AAD}. It follows from (\ref{pivotTA}) that $v_\rho$ is either a pivot or $b'$ is a witness. In fact, if $b \in E_{A, \rho}$, then $v_\rho$ is a strict pivot.

Testing whether $b \in E_{A, \rho}$ is a crucial subproblem in solving $Ax=b$ or the normal equation. In the following, we first state a theorem that concerns the solvability of $Ax=b$ in $E_{A, \rho}$, then describe Algorithm \ref{2.1} and state its complexity.

\begin{theorem} \label{prop0TA} Given $x' \in \mathbb{R}^n$, where $\|x'\| \leq \rho$, let $b' = Ax'$. Assume $b' \not =b$. Let
$c= A^T(b-b')$.

(1) If $c=0$, $x'$ is a solution to the normal equation, $Ax=b$ is unsolvable and $b'$ is witness.

(2) Suppose $c \not =0$. Then
\begin{equation}  \label{vr}
v_\rho= \rho {Ac}/{\Vert c \Vert}= {\rm argmax}\{(b-b')^Ty : y \in E_{A, \rho}\}, \quad i.e. \quad \max\{c^Tx: Ax\in  E_{A, \rho} \} = c^T v_\rho= \rho \Vert c \Vert.
\end{equation}

(3) $v_\rho$ is a strict pivot if and only if
\begin{equation}  \label{eqlem2}
\rho \Vert c \Vert \geq (b-b')^Tb.
\end{equation}

(4)  If $b' \in E_{A,\rho}$ is a witness and $x_*$ the minimum-norm solution of the normal equation, then
\begin{equation}
\frac{(b-b')^Tb}{\Vert A^T(b-b')} < \Vert x_* \Vert.
\end{equation}
\end{theorem}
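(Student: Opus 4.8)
\emph{Proof sketch.} The plan is to use the only property of $x_*$ that matters here --- that it solves the normal equation $A^TAx_*=A^Tb$ --- to rewrite the scalar $(b-b')^Tb$ as an inner product controllable by Cauchy--Schwarz. First I would set aside the degenerate case $c:=A^T(b-b')=0$: by part (1) this forces $x'$ to solve the normal equation and makes the displayed quotient meaningless, so henceforth $c\neq 0$. It is worth recording what the witness hypothesis gives geometrically: by the Distance Duality Theorem~\ref{thm1TA}, $b'$ a witness certifies $b\notin E_{A,\rho}$, and by parts (2)--(3) a witness is never a strict pivot, hence $\rho\Vert c\Vert<(b-b')^Tb$; the content of (4) is to upgrade this crude lower bound $\rho<(b-b')^Tb/\Vert c\Vert$ on $\Vert x_*\Vert$ to the sharp one.

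The core of the proof is an exact identity. Put $\hat b:=Ax_*$. From $A^T(\hat b-b)=A^TAx_*-A^Tb=0$, the vector $b-\hat b$ is orthogonal to $\mathrm{range}(A)$, so $\hat b$ is the orthogonal projection of $b$ onto $\mathrm{range}(A)$, while $b'=Ax'$ also lies in $\mathrm{range}(A)$. Splitting $b=\hat b+(b-\hat b)$ inside $(b-b')^Tb$: the part along $b-\hat b$ contributes $(b-b')^T(b-\hat b)=b^T(b-\hat b)=\Vert b-\hat b\Vert^2$, since $b'$ and $\hat b$ are both orthogonal to $b-\hat b$; the part along $\hat b$ contributes $(b-b')^T\hat b=(b-b')^TAx_*=\big(A^T(b-b')\big)^Tx_*=c^Tx_*$. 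Hence $(b-b')^Tb=c^Tx_*+\Vert b-\hat b\Vert^2$, and Cauchy--Schwarz gives $c^Tx_*\le\Vert c\Vert\,\Vert x_*\Vert$.

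When $Ax=b$ is consistent --- the regime in which $\Vert x_*\Vert$ is the norm of an actual solution --- we have $\hat b=b$, so the identity collapses to $(b-b')^Tb=c^Tx_*\le\Vert c\Vert\,\Vert x_*\Vert$ and, dividing by $\Vert c\Vert>0$, the assertion follows. The delicate work, which I expect to be the main obstacle, is to extract a quantitative consequence from the purely geometric ``no pivot'' condition so as to (i) upgrade $\le$ to a strict inequality --- equality in Cauchy--Schwarz would force $x_*$ to be a nonnegative multiple of $c=A^T(b-b')$, which I would try to rule out by feeding the witness inequality $\rho\Vert c\Vert<\tfrac12(\Vert b\Vert^2-\Vert b'\Vert^2)=(b-b')^Tb-\tfrac12\Vert b-b'\Vert^2$ (equivalently, that $v_\rho$ is not a pivot) back into the identity --- and (ii) control the surplus term $\Vert b-\hat b\Vert^2$ when $b\notin\mathrm{range}(A)$, which is not handled by Cauchy--Schwarz and would again require the witness inequality together with the Pythagorean relation $\Vert b-b'\Vert^2=\Vert b-\hat b\Vert^2+\Vert \hat b-b'\Vert^2$. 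By contrast, the algebraic identity and the Cauchy--Schwarz estimate themselves are routine.
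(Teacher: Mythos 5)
Your route is genuinely different from the paper's, and it is correct for what it claims. The paper proves (4) by growing the ellipsoid: since $b'$ is a witness, part (3) gives $\rho\Vert c\Vert<(b-b')^Tb$; for any $\rho'$ with $\rho\le\rho'<(b-b')^Tb/\Vert c\Vert$ one still has $\rho'\Vert c\Vert<(b-b')^Tb$, so $v_{\rho'}$ (the maximizer of $(b-b')^Ty$ over $E_{A,\rho'}$) is still not a strict pivot, hence $b'$ remains a witness in $E_{A,\rho'}$, and by Distance Duality (Theorem~\ref{thm1TA}) $b\notin E_{A,\rho'}$. When $Ax=b$ is consistent, $b=Ax_*\in E_{A,\Vert x_*\Vert}$, forcing $\rho'<\Vert x_*\Vert$, and letting $\rho'$ approach $(b-b')^Tb/\Vert c\Vert$ yields the bound. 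Your argument bypasses this machinery entirely: the exact identity $(b-b')^Tb=c^Tx_*+\Vert b-\hat b\Vert^2$ (with $\hat b=Ax_*$ the orthogonal projection of $b$ onto $\mathrm{range}(A)$) followed by Cauchy--Schwarz is a short, self-contained inner-product computation that never mentions $v_\rho$, pivots, or auxiliary radii. The paper's route has the advantage of fitting the section's architecture (it is literally parts (2) and (3) applied once more at a larger $\rho$); yours has the advantage of being transparent and of exhibiting explicitly the obstruction $\Vert b-\hat b\Vert^2$ that appears in the inconsistent case.

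The two obstacles you flag at the end are not merely ``delicate work'' left to do --- they are real. On strictness: equality in Cauchy--Schwarz is attained; take $A=I$, $b=e_1$, $x'=0$ (so $b'=0$ and $c=b$), and any $\rho<\tfrac12$. Then $b'$ is a witness, yet $(b-b')^Tb/\Vert c\Vert=1=\Vert x_*\Vert$, so only $\le$ holds, not the stated $<$. On the surplus: when $Ax=b$ is inconsistent the term $\Vert b-\hat b\Vert^2$ can genuinely overturn the inequality; take $A=(1,0)^T$, $b=(1,1)^T$, $x'=0$, $\rho=0.1$. Then $b'=0$ is a witness, $x_*=1$ so $\Vert x_*\Vert=1$, but $(b-b')^Tb/\Vert c\Vert=2$. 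The paper's own proof is silent on both points: the step ``$b\notin E_{A,\rho'}$ therefore $\rho'<\Vert x_*\Vert$'' implicitly uses $b=Ax_*$, i.e.\ consistency, and taking a supremum of $\rho'$ only delivers $\le$. So your decomposition is the right tool here --- not to close the gaps, but to see precisely that the statement needs consistency of $Ax=b$ and $\le$ in place of $<$.
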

\begin{proof}
(1):  Since $c=0$, $A^TAx'=A^Tb$. Since any solution to the normal equation is the minimizer of $\Vert Ax- b \Vert$ and $b' \not =b$, $Ax=b$ is unsolvable.  If $b'$ is not a witness, from Theorem \ref{thm1TA} there exists a pivot in $E_{A,\rho}$ and this in turn implies there exists  $b'' \in E_{A,\rho}$ such that $\Vert b''-b \Vert < \Vert b'-b \Vert$. However, as $x'$ is a solution to the normal equation, $\Vert b'-b \Vert$ is minimum, leading to a contradiction.

(2): The Largarnge multiplier condition implies
$x_\rho= \rho \frac{c}{\Vert c \Vert}= {\rm argmax}\{c^Tx : \Vert x \Vert \leq  \rho\}$. Thus $v_\rho =A x_\rho$ and $\max\{c^Tx: Ax\in  E_{A, \rho} \} = c^T v_\rho= \rho \Vert c \Vert$.

(3): Substituting $v_\rho$ for $v$ in (\ref{pivotsTA}) implies (\ref{eqlem2}).

(4):  Suppose $b' \in E_{A, \rho}$ is a witness. Let $\rho'$ be any value between $\rho$ and ${(b-b')^Tb}/{\Vert A^T(b-b') \Vert}$.  From (2), $v_{\rho'}=\rho'Ac/\Vert c \Vert$.  Since $b'$ is a witness, from (3),  $\rho \Vert A^T(b-b') \Vert < (b-b')^Tb$. This implies $\rho' <  \Vert x_* \Vert$.
\end{proof}
Algorithmically, an important aspect of the Triangle Algorithm relies on the fact that the optimization of a linear function over an ellipsoid can be computed efficiently.

The Triangle Algorithm for the general matrix equation $Ax=b$ is described in Algorithm \ref{2.1}. Given a parameter $\rho > 0$, it tests if $b$ lies in the ellipsoid $E_{A, \rho}$. If $\rho < \Vert x_* \Vert$ (where $x_*$ is the minimum-norm solution of the normal equation), it computes a witness. Thus, each time a given $\rho$ results in a witness $b'$, using Theorem \ref{prop0TA} parts (3) and (4), we increase $\rho$ to the least amount where a pivot exists in the corresponding enlarged ellipsoid. However, we also want to ensure that $\rho$ increases sufficiently to eventually catch up to $\Vert x_*\Vert$. To achieve this, we take the new $\rho$ to be the maximum of $2 \rho$ and $(b-b')^Tb/\Vert c \Vert$. We justify that this value will work.

This algorithm plays a crucial role in determining whether $Ax=b$ is solvable or not. It iteratively adjusts the parameter $\rho$ to explore the possibility of solvability while efficiently checking if $b \in E_{A, \rho}$.

\begin{algorithm}[!htb]
\scriptsize
\scriptsize
\SetAlgoNoLine
\KwIn{$A \in \mathbb{R}^{m \times n}$, $b \in \mathbb{R}^m$, $b \not =0$, $\varepsilon, \varepsilon' \in (0,1)$ }
$\rho \gets 0$, $x' \gets 0$, $b' \gets 0$.

\While{$(\|b - b'\| > \varepsilon)$ $\wedge$  $( \Vert A^T(b-b') \Vert > \varepsilon')$}{$c=A^T(b-b')$,
$v_\rho \gets \rho
A{c}/{\|c\|}$.

  \lIf{$ \rho \Vert c \Vert \geq (b-b')^Tb$} { $\alpha \gets (b - b')^T(v_\rho - b')/\|v_\rho - b'\|^2$,

   $b' \gets (1-\alpha) b' + \alpha v_\rho$, \quad $x' \gets (1-\alpha) x' +  \alpha {\rho c}/{\Vert c \Vert}$}
   \Else{{$b'=Ax'$ is a witness,}
{$\rho \gets \max \{2 \rho,  {(b-b')^Tb }/{\Vert c \Vert}\}$}}}
    \caption{(TA) Computes $\varepsilon$-approximate solution of $Ax=b$ or $\varepsilon'$-approximate soln of $A^TAx=A^Tb$.} \label{2.1}
\end{algorithm}

\begin{remark}  Initially in Algorithm \ref{2.1} we may choose $\varepsilon'=\varepsilon$. If $Ax=b$ admits an $\varepsilon$-approximate solution,   the algorithm may still terminate with an $\varepsilon'$-approximate solution to the normal equation. In such case, we can halve the value of $\varepsilon'$ and restart the algorithm with the current approximate solution. This process will eventually cause the algorithm to terminate with one approximate solution or another.
\end{remark}

\begin{remark}
In the terminology of iterative solvers, Algorithm \ref{2.1}, as well as the remaining ones in this section, are based on two-term recurrences that generate vector sequences $x'$ and $b'$. Here, $x'$ represents the approximate solution of the given linear system. However, while $b' = Ax'$, the algorithm first computes the new $b'$ as a convex combination of the previous $b'$ and the pivot $v_\rho$. Then, it computes $x'$ using the same convex combination of the previous approximation and the $x$ vector that defines the pivot. If $A$ has $N$ nonzero entries, each iteration takes $O(N)$ operations.
\end{remark}
To derive the iteration complexity of Algorithm \ref{2.1}, we refer to the iteration complexity bound of the general Triangle Algorithm for solving CHMP, as taken from \cite{kalcon}, and stated as Theorem \ref{thmzero}.

\begin{theorem} \label{thm2pTA} {\rm {(TA Iteration Complexity Bounds)}}
Algorithm \ref{2.1} either terminates with $x'$ such that $\Vert Ax' - b \Vert \leq \varepsilon$ or $\Vert A^TAx'-A^Tb \Vert \leq \varepsilon'$. Furthermore:

(i) If $\Vert Ax' - b \Vert \leq \varepsilon$, then $x'=A^Tw'$ for some $w'$, so in fact, $x'$ is an $\varepsilon$-approximate minimum-norm solution to $Ax=b$.

(ii) If $Ax=b$ is solvable, by selecting $\varepsilon'$ small enough, the algorithm produces $x'$ satisfying (i) in $O(\Vert A \Vert^2 \Vert x_* \Vert^2/\varepsilon^2)$ iterations.

(iii) Suppose $Ax=b$ is not solvable. Let $\delta_*=\Vert Ax_* - b \Vert$. Let $\rho_{\varepsilon'} = {\Vert b \Vert^2}/{\varepsilon'}$. Suppose the value of $\rho$ in the while loop satisfies $\rho \geq \rho_{\varepsilon'}$. Then, if Algorithm \ref{2.1} does not compute an $\varepsilon$-approximate solution $x_\varepsilon$ of $Ax=b$ satisfying $\Vert x_\varepsilon \Vert \leq \rho$, it computes an $\varepsilon'$-approximate solution $x_{\varepsilon'}$ of $A^TAx=A^Tb$ in $O(\Vert A \Vert^2 \rho_{\varepsilon'}/\delta_*^2)=O(\Vert A \Vert^2 \Vert b \Vert^4/\delta_*^2 \varepsilon'^2)$ iterations, satisfying $\Vert x_{\varepsilon'} \Vert \leq \rho$.
\end{theorem}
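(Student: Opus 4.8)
The plan is to dispatch the three parts in order, with part (i) a short invariant and parts (ii)--(iii) obtained by feeding the convex-hull membership bounds of Theorem~\ref{thmzero} into the ellipsoids $E_{A,\rho}$, together with careful bookkeeping on how $\rho$ grows. For part (i) I would show by induction over the while loop that $x'$ always lies in $\mathrm{range}(A^T)$: initially $x'=0=A^T0$; a witness step leaves $x'$ unchanged; and a strict-pivot step sends $x'$ to $(1-\alpha)x'+\alpha\,\rho c/\Vert c\Vert$ with $c=A^T(b-b')$, where $\rho c/\Vert c\Vert = A^T\bigl(\rho(b-b')/\Vert c\Vert\bigr)\in\mathrm{range}(A^T)$. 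Hence at termination $x'=A^Tw'$ for some $w'$, so if the loop exits with $\Vert Ax'-b\Vert\le\varepsilon$ then $x'$ is an $\varepsilon$-approximate minimum-norm solution as in Definition~\ref{deffirst}.

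For part (ii): since $Ax=b$ is consistent, $b\in\mathrm{col}(A)$, and every iterate $b'=Ax'$ also lies in $\mathrm{col}(A)$, so $b-b'\in\mathrm{col}(A)=\ker(A^T)^\perp$ and therefore $\Vert A^T(b-b')\Vert\ge\sigma\,\Vert b-b'\Vert$ for $\sigma$ the smallest positive singular value of $A$. Choosing $\varepsilon'<\sigma\varepsilon$ then makes the normal-equation stopping test unable to fire while $\Vert b-b'\Vert>\varepsilon$, so any exit of the loop is with $\Vert b-b'\Vert\le\varepsilon$, i.e.\ with an $x'$ satisfying (i). For the iteration count, the first iteration produces the witness $b'=0$ and sets $\rho=\Vert b\Vert^2/\Vert A^Tb\Vert$, which by Theorem~\ref{prop0TA}(4) is below $\Vert x_*\Vert$; thereafter each witness step at least doubles $\rho$ while, again by Theorem~\ref{prop0TA}(4), never pushing it past $\max\{2\rho_{\mathrm{prev}},\Vert x_*\Vert\}$, so every value $\rho_j$ taken in the loop satisfies $\rho_j\le 2\Vert x_*\Vert$, and once $\rho_j\ge\Vert x_*\Vert$ we have $b=Ax_*\in E_{A,\rho_j}$ and no further witness occurs. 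In the phase at a fixed $\rho_j$, $\mathrm{diam}\,E_{A,\rho_j}=2\rho_j\Vert A\Vert$, and Theorem~\ref{thmzero} shows that in $O(\rho_j^2\Vert A\Vert^2/\varepsilon^2)$ iterations the algorithm either halts (its part (i), since the point of $E_{A,\rho_j}$ nearest $b$ is within $\varepsilon$ whenever that distance is $\le\varepsilon$) or produces a witness and moves on (its part (ii), whose bound is at most this whenever the distance from $b$ to $E_{A,\rho_j}$ exceeds $\varepsilon$). Since consecutive $\rho_j$ at least double, $\sum_j\rho_j^2$ is a geometric series dominated by $O(\Vert x_*\Vert^2)$, so the total count is $O(\Vert A\Vert^2\Vert x_*\Vert^2/\varepsilon^2)$.

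For part (iii): now $b\notin\mathrm{col}(A)$, so $b$ lies in no $E_{A,\rho}$ and no iterate is exact. Because $b'=0$ initially and no step increases $\Vert b-b'\Vert$, we always have $(b-b')^Tb\le\Vert b-b'\Vert\,\Vert b\Vert\le\Vert b\Vert^2$. Fix a phase with $\rho\ge\rho_{\varepsilon'}=\Vert b\Vert^2/\varepsilon'$: if it ever reaches a witness iteration, Theorem~\ref{prop0TA}(3) gives $\rho\Vert A^T(b-b')\Vert<(b-b')^Tb\le\Vert b\Vert^2$, so $\Vert A^T(b-b')\Vert<\Vert b\Vert^2/\rho\le\varepsilon'$ and the loop instead exits there with $x'$ an $\varepsilon'$-approximate solution of $A^TAx=A^Tb$; moreover $\Vert x'\Vert\le\rho$ throughout, each iterate being a convex combination of vectors of norm $\le\rho$. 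Hence this phase ends either with an $\varepsilon$-approximate solution of $Ax=b$ of norm $\le\rho$ or with such an $\varepsilon'$-approximate solution of the normal equation, and since the distance from $b$ to $E_{A,\rho}$ is at least $\delta_*$, Theorem~\ref{thmzero}(ii) (or its part (i) if that distance is $\le\varepsilon$) bounds the phase length by $O(\rho^2\Vert A\Vert^2/\delta_*^2)$ with $\mathrm{diam}\,E_{A,\rho}=2\rho\Vert A\Vert$. By the doubling rule and Theorem~\ref{prop0TA}(4), the first $\rho$ reaching $\rho_{\varepsilon'}$ is at most $\max\{2\rho_{\varepsilon'},\Vert x_*\Vert\}=O(\rho_{\varepsilon'})$ once $\varepsilon'$ is small, giving $O(\Vert A\Vert^2\rho_{\varepsilon'}^2/\delta_*^2)=O(\Vert A\Vert^2\Vert b\Vert^4/\delta_*^2\varepsilon'^2)$.

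The main obstacle I anticipate is the phase bookkeeping in (ii)--(iii): one must check that the rule $\rho\leftarrow\max\{2\rho,(b-b')^Tb/\Vert c\Vert\}$ neither stalls (the first witness makes $\rho$ positive and every later witness at least doubles it) nor overshoots the relevant threshold by more than a constant factor --- which is precisely where the witness estimate $(b-b')^Tb/\Vert A^T(b-b')\Vert<\Vert x_*\Vert$ of Theorem~\ref{prop0TA}(4) is used --- and that the per-phase bounds from Theorem~\ref{thmzero}, each proportional to $\rho_j^2$, collapse into a geometric sum rather than contributing a factor equal to the number of phases. A smaller but necessary point is the quantitative choice $\varepsilon'<\sigma\varepsilon$ in (ii) that prevents the normal-equation test from firing while $\Vert b-b'\Vert$ is still large.
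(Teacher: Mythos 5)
Your proof follows essentially the same route as the paper: part (i) by the same induction showing $x'$ stays in $\mathrm{range}(A^T)$, and parts (ii)--(iii) by feeding Theorem~\ref{thmzero} into the ellipsoid $E_{A,\rho}$ phase-by-phase while controlling the doubling of $\rho$. Your treatment is in fact somewhat more explicit than the paper's: you give a concrete choice $\varepsilon'<\sigma\varepsilon$ where the paper only asserts existence of a small enough $\varepsilon'$, you carry out the geometric sum $\sum_j\rho_j^2=O(\Vert x_*\Vert^2)$ across phases which the paper leaves implicit, and in (iii) you note that $\rho$ cannot overshoot $\rho_{\varepsilon'}$ by more than a constant factor (though the cleaner justification there is the loop-entry condition $\Vert c\Vert>\varepsilon'$ together with $(b-b')^Tb\le\Vert b\Vert^2$, which directly gives $(b-b')^Tb/\Vert c\Vert<\rho_{\varepsilon'}$, rather than an appeal to Theorem~\ref{prop0TA}(4) and smallness of $\varepsilon'$).
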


\begin{proof} The first statement in the theorem follows from Theorem \ref{prop0TA} and the description of  Algorithm \ref{2.1}. We prove (i)-(iii).

(i): First, note that in the algorithm, if $x'=Aw'$ for some $w'$, then in the next iteration, the new $x'$ also satisfies the same. Since we start with $x'=0$, throughout the iterations of TA, $x'=Aw'$ for some $w'$. To argue that $x'$ is a claimed approximate minimum-norm solution, we use the known fact that if $Ax_*=b$ but $x_*=A^Tw_*$, then $x_*$ is the minimum-norm solution to $Ax=b$ (see Proposition \ref{minnorm} for a proof).

(ii): Since the initial value of $\rho=0$ and $b'=0$, the algorithm will produce a witness, and the next value of $\rho$ is $\Vert b \Vert^2/\Vert A^T b\Vert$. So, this value at least gets doubled each time and eventually exceeds $\Vert x_* \Vert$, and at that point, Algorithm \ref{2.1} produces an $\varepsilon$-approximate solution of $Ax=b$ or an $\varepsilon'$-approximate solution of the normal equation. If the latter is the case, we reduce $\varepsilon'$ and continue, say by halving it. By repeating this, TA will eventually compute an $\varepsilon$-approximate solution of $Ax=b$. This follows from the fact that if $Ax=b$ is solvable, there exists $\varepsilon'$ so that when $\Vert A^TA x' - A^Tb \Vert \leq \varepsilon'$, then $\Vert A x' - b \Vert \leq \varepsilon$. In other words, if $Ax=b$ is solvable, by halving $\varepsilon'$ and running Algorithm \ref{2.1}, we will eventually compute an $\varepsilon$-approximate solution of $Ax=b$. Now substituting this into part (i) in Theorem \ref{thmzero} and using that the diameter of $E_{A, \rho}$ is at most $\Vert A \Vert \rho$, we get the proof of the bound on the number of iterations to get $\Vert b - b' \Vert \leq \varepsilon$.

(iii): Suppose for a given $\rho  \geq \rho_{\varepsilon'}$ the algorithm provides a witness $b' \in E_{A,\rho}$. Then by part (4) of Theorem \ref{prop0TA},
Cauchy-Schwarz inequality and the fact that the algorithm starts with $b'=0$, we have
\begin{equation} \label{bound1TA}
\rho  \Vert c \Vert < (b-b')^T b  \leq \Vert b- b' \Vert \cdot \Vert b \Vert \leq \Vert b \Vert^2.
\end{equation}
Since we must have $\Vert c \Vert \geq \varepsilon'$, (\ref{bound1TA}) implies $\rho < \rho_{\varepsilon'}$, a contradiction. Now using part (ii) in Theorem \ref{thmzero} we get the claimed complexity bound.
\end{proof}

\subsection{Triangle Algorithm for Symmetric PSD Linear System} \label{sec2.2}

Here, we develop a more efficient version of Algorithm \ref{2.1} for the case when $A$ is symmetric and positive semi-definite (PSD). First, we state a useful known result, and for the sake of completeness, provide a brief proof.
\begin{proposition}  \label{minnorm} $Ax=b$ is solvable if and only if $AA^Tw=b$ is solvable. Moreover, if $w$ is any solution to $AA^Tw=b$, then $x_*=A^Tw$ is the minimum-norm solution to $Ax=b$.  In particular, if $A$ is symmetric PSD, and $A^{1/2}$ is its square-root, then $A^{1/2}y=b$ is solvable if and only if $Ax=b$ is solvable. Moreover, if $x$ is any solution to $Ax=b$, then $y_*=A^{1/2}x$ is the minimum-norm solution to $A^{1/2}y=b$.
\end{proposition}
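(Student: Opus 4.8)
The plan is to reduce the statement to two standard linear-algebra facts: the range identity $\mathrm{range}(A)=\mathrm{range}(AA^T)$, and the orthogonal decomposition $\mathbb{R}^n=\ker(A)\oplus\mathrm{range}(A^T)$. Everything in the proposition is an easy consequence of these once they are in place.

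First I would establish $\ker(A^T)=\ker(AA^T)$. The inclusion $\ker(A^T)\subseteq\ker(AA^T)$ is immediate; conversely, if $AA^Tw=0$ then $\Vert A^Tw\Vert^2=w^TAA^Tw=0$, so $A^Tw=0$. Taking orthogonal complements and using that $AA^T$ is symmetric gives $\mathrm{range}(A)=\ker(A^T)^\perp=\ker(AA^T)^\perp=\mathrm{range}(AA^T)$. Hence $Ax=b$ is solvable iff $b\in\mathrm{range}(A)=\mathrm{range}(AA^T)$ iff $AA^Tw=b$ is solvable, which settles the first equivalence.

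Next, for the minimum-norm assertion, suppose $w$ solves $AA^Tw=b$ and set $x_*=A^Tw$; then $Ax_*=AA^Tw=b$, so $x_*$ solves $Ax=b$. If $x$ is any solution, then $x-x_*\in\ker(A)$ while $x_*=A^Tw\in\mathrm{range}(A^T)=\ker(A)^\perp$, so these two vectors are orthogonal and Pythagoras yields $\Vert x\Vert^2=\Vert x_*\Vert^2+\Vert x-x_*\Vert^2\ge\Vert x_*\Vert^2$, with equality only if $x=x_*$. Thus $x_*$ is the unique minimum-norm solution.

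Finally, the symmetric PSD case follows by specialization: $A^{1/2}$ is symmetric with $A^{1/2}A^{1/2}=A$, so applying the statements just proved with $A$ replaced by $A^{1/2}$ gives that $A^{1/2}y=b$ is solvable iff $A^{1/2}(A^{1/2})^Tw=Aw=b$ is solvable iff $Ax=b$ is solvable, and that whenever $w$ solves $Aw=b$ the vector $(A^{1/2})^Tw=A^{1/2}w$ is the minimum-norm solution of $A^{1/2}y=b$; taking $w=x$ for any solution $x$ of $Ax=b$ gives $y_*=A^{1/2}x$, as claimed. I do not anticipate a genuine obstacle: the only point requiring care is invoking the symmetry of $AA^T$ (respectively $A^{1/2}$) when passing to orthogonal complements, and the sole piece of real content is the kernel/range identity $\ker(A^T)=\ker(AA^T)$; the remainder is the orthogonal-projection argument and a change of variable.
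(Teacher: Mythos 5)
Your proof is correct, but it takes a genuinely different route from the paper. The paper derives everything through the lens of convex optimization: the minimum-norm solution is the optimizer of $\min\{\tfrac12 x^Tx : Ax=b\}$, so by the Lagrange multiplier (KKT) condition $x_*=A^Tw$ for some multiplier vector $w$, whence $AA^Tw=b$; the converse and uniqueness follow from the sufficiency of the KKT condition for a convex problem. You instead work directly with the fundamental subspaces: you first prove $\ker(A^T)=\ker(AA^T)$, pass to orthogonal complements (using symmetry of $AA^T$) to get $\mathrm{range}(A)=\mathrm{range}(AA^T)$ for the solvability equivalence, and then argue minimality via the orthogonal decomposition $\mathbb{R}^n=\ker(A)\oplus\mathrm{range}(A^T)$ and the Pythagorean identity. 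Your treatment is more elementary and self-contained — it does not invoke existence of minimizers, Lagrange multipliers, or convexity — and it also delivers uniqueness of the minimum-norm solution for free from the strict case of Pythagoras. The paper's approach is shorter once one grants the Lagrange multiplier machinery, and it echoes the optimization viewpoint used elsewhere in the article, which is presumably why the author chose it. Both are valid; your specialization to the symmetric PSD case by substituting $A^{1/2}$ for $A$ and using $(A^{1/2})^T=A^{1/2}$ matches what the paper does implicitly when it says the PSD case "is straightforward."
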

\begin{proof}  Suppose $Ax=b$ is solvable. Then it has a solution of minimum norm, $x_*$.  On the other hand, $x_*$ is the optimal solution of the convex programming problem $\min \{ \frac{1}{2}x^Tx:  Ax=b \}$. According to the Lagrange multiplier condition, $x_*$ exists and satisfies $x_*= A^Tw$,  where $w$ is the vector of multipliers.
This implies $AA^Tw=Ax_*=b$. Conversely, if $AA^Tw=b$ then $\widehat x =A^Tw$ satisfies the Lagrange multiplier condition and by the convexity of the underlying optimization problem, $\widehat x=x_*$.  In the special case where $A$ is symmetric PSD, $A^{1/2}$ exists, hence the proof is straightforward.
\end{proof}
We utilize the above results to outline a version of Algorithm \ref{2.1} for solving $Ax=b$, where $A$ is symmetric and PSD. The key idea is to approach the problem of solving $Ax=b$ as if we were computing the minimum-norm solution of $A^{1/2}y=b$. However, we will do this implicitly, without the necessity of computing $A^{1/2}$ or the iterate $y'$ that approximates the solution of the linear system or its normal equation. This approach offers an advantage in that each iteration will only require one matrix-vector computation, in contrast to the two such operations required in the original algorithm.

Let's consider testing the solvability of $A^{1/2}y=b$ within a fixed ellipsoid using Algorithm \ref{2.1}. Specifically, given $\rho >0$, we aim to determine if $b \in E_{A^{1/2}, \rho}= \{A^{1/2} y: \Vert y \Vert \leq \rho\}$.

For a given $b'=A^{1/2}y' \in E_{A^{1/2}, \rho}$, the corresponding values of $c$ and $v_\rho$ in Algorithm \ref{2.1}, denoted as $\widehat c$ and $\widehat v_\rho$, are calculated as follows:
\begin{equation}
\widehat c=A^{1/2} (b-b'), \quad \widehat v_{\rho} = \rho A^{1/2}{\widehat c}/{\Vert \widehat c \Vert}.
\end{equation}
It's important to note that while $\widehat c $ is expressed in terms of $A^{1/2}$, $\Vert \widehat c \Vert= \sqrt{(b-b')^TA(b-b')}$, and $\widehat v_\rho = A(b-b')/\Vert \widehat c \Vert$. The corresponding step-size is computed as:

Note that while $\widehat c $ is expressed in terms of $A^{1/2}$, $\Vert \widehat c \Vert= \sqrt{(b-b')^TA(b-b')}$ and $\widehat v_\rho = A(b-b')/\Vert \widehat c \Vert$.  The corresponding step-size  is
\begin{equation}
\widehat \alpha = (b - b')^T(\widehat v_\rho - b')/\|\widehat v_\rho - b'\|^2.
\end{equation}
These calculations lead to the updated values of $b''=A^{1/2}y''$, where
\begin{equation}
b''=(1- \widehat \alpha) b' + \widehat \alpha \widehat v_\rho, \quad y''= (1-\widehat \alpha) y' +  \widehat \alpha \rho {A^{1/2}(b-b')}/{\Vert \widehat c \Vert}.
\end{equation}
Now, let's suppose that $y'$ in $b'=A^{1/2}y'$ is given as $y' =A^{1/2}x' \in E_{A^{1/2}, \rho}$ for some $x'$, which is an approximate solution to $Ax=b$. This suggests that by factoring $A^{1/2}$ from the expression for $y''$, we get $y''=A^{1/2}x''$, where
\begin{equation}
x''=(1- \widehat \alpha) x'+ \widehat \alpha \rho {(b-b')^T}/{\Vert \widehat c \Vert}.
\end{equation}
The final modification to Algorithm \ref{2.1} with respect to solving $A^{1/2}y=b$ is to note that the condition in the while loop, $\Vert A^T(b-b') \Vert > \varepsilon'$, reduces to $\Vert A^{1/2}(b-b' \Vert = \sqrt{(b-b')^TA(b-b')} > \varepsilon'$.  These modifications result in Algorithm \ref{2.2}.

\begin{algorithm}[!htb]
\scriptsize
\SetAlgoNoLine
\KwIn{$A \in S_+^{m \times m}$ ($m \times m$ symmetric PSD), $b \in \mathbb{R}^m$, $b \not =0$, $\varepsilon, \varepsilon' \in (0,1)$.}
$\rho \gets 0$, $x' \gets 0$, $b' \gets 0$.

 \While{$(\|b - b'\| > \varepsilon)$ $\vee$ $\sqrt{(b-b')^TA(b-b')} > \varepsilon' )$}{
 $\Vert \widehat c \Vert \gets \sqrt{(b-b')^TA(b-b')}$,
  $\widehat v_{\rho} = \rho {A(b-b')}/{\Vert \widehat c \Vert}$

  \lIf{$ \rho \Vert \widehat c \Vert \geq (b-b')^Tb$} { $\widehat \alpha \gets (b - b')^T(\widehat v_\rho - b')/\|\widehat v_\rho - b'\|^2$,

  $b' \gets (1-\widehat \alpha) b' + \widehat \alpha \widehat v_\rho$, \quad $x' \gets (1-\widehat \alpha) x' +  \widehat \alpha \rho {(b-b')}/{\Vert \widehat c \Vert} $}

   \Else{{$b'=Ax'$ is a witness,}
   {$\rho \gets \max \{2 \rho,  {(b-b')^Tb }/{\Vert c \Vert}\}$}}}
    \caption{(TA) Computes $\varepsilon$-approximate solution of $Ax=b$ or $\varepsilon'$-approximate solution of $A^2x=Ab$.} \label{2.2}
\end{algorithm}
An analogous complexity bound to Theorem \ref{thm2pTA} can be stated for Algorithm \ref{2.2}.

\subsection{Triangle Algorithm for Approximation of Minimum-Norm Solution} \label{sec2.3}

Algorithm \ref{2.3}, as described below, computes an $\varepsilon$-approximate minimum-norm solution to the system $Ax = b$. It comprises multiple phases. In each phase, it utilizes an $\varepsilon$-approximate solution, denoted as $x_\varepsilon$. Therefore, if $\overline \rho = \Vert x_\varepsilon \Vert$,  $\overline b = Ax_\varepsilon \in E_{A, \overline \rho}$. The initial $x_\varepsilon$ can be computed using any algorithm.
Additionally, in each phase, a value $\underline \rho$ is utilized, representing a range where $E_{A, \underline \rho}$ does not contain any $\varepsilon$-approximate solution. Initially, $\underline \rho = 0$. The algorithm consistently decreases $\overline \rho = \Vert x_\varepsilon \Vert$ or increases $\underline \rho$ until $\overline \rho - \underline \rho \leq \varepsilon$. If this inequality is not satisfied for the given $\overline \rho$ and $\underline \rho$, Algorithm \ref{2.1} is used to test if there exists an $\varepsilon$-approximate solution in $E_{A, \rho}$, where $\rho = (\overline \rho + \underline \rho)/2$. If such a solution exists, we set $\overline \rho = \rho$. Otherwise, Algorithm \ref{2.1} provides a witness $b'$. In this scenario, $\underline \rho$ is set as $\max{\rho, {(b - b')^Tb /\Vert c \Vert}}$, and the process is repeated. For simplicity of description of the algorithm, we will assume $\Vert A^T(b-b') \Vert=0$ in the inner while loop will not occur. Otherwise, we take the new $x_\varepsilon= \alpha x'$, where $\alpha \in (0,1)$ is so that $\Vert A x_\varepsilon - b \Vert =\varepsilon$ and $A^TAx' \not = A^Tb$.

Figure \ref{minnormx} illustrates the initial phase with $E_{A, \overline \rho}$ (the largest ellipse) and the first $E_{A, \rho}$ (the smallest ellipse).
The algorithm tests if $E_{A, \rho}$  contains an $x_\varepsilon$-approximate solution.  In this test the figure shows an intermediate iterate $b'$ corresponding to a strict pivot $v_\rho$. $b'$ happens to be a witness. Then
$E_{A, \rho}$ is expanded to the next $E_{A, \underline \rho}$. Thus the initial gap $\overline \rho - \underline \rho$ is reduced by a factor of at least $1/2$. The second largest ellipse in the figure is the smallest ellipse containing $b$, that is $E_{A, \rho_*}$,  $\rho_*=\Vert x_* \Vert$.

\begin{algorithm}[!htb]
\scriptsize
\SetAlgoNoLine
\KwIn{$A \in \mathbb{R}^{m \times n}$, $b \in \mathbb{R}^m$, $b \not =0$,
$\varepsilon$, $x_\varepsilon$ ($\varepsilon$-approximate solution of $Ax=b$)}
$\overline \rho \gets \Vert x_\varepsilon \Vert$, $\underline \rho \gets 0$,
$x' \gets 0$, $b' \gets Ax'$.

\While{$\overline \rho - \underline \rho > \varepsilon$}{
 $\rho \gets \frac{1}{2}(\overline \rho + \underline \rho)$,

 \While{$((\|b - b'\| > \varepsilon) \wedge (\Vert A^T(b-b') \Vert > 0)$}
 {$c \gets A^T(b-b')$, $v_\rho \gets \rho
A{c}/{\|c\|}$.

  \lIf{$ \rho \Vert c \Vert \geq (b-b')^Tb$} { $\alpha \gets (b - b')^T(v_\rho - b')/\|v_\rho - b'\|^2$,\\
$b' \gets (1-\alpha) b' + \alpha v_\rho$, \quad $x' \gets (1-\alpha) x' +  \alpha {\rho c}/{\Vert c \Vert}$}
   \Else{{ STOP ($b'$ is a witness, the while loop terminates) }}}
     \lIf{$\Vert b-b' \Vert \leq \varepsilon$} { $\overline \rho \gets \rho$}
   \Else{{$\underline \rho \gets \max \{\rho, (b-b')^Tb/\Vert c \Vert\}$}}}
    \caption{(TA) Given $\varepsilon$-approximate solution of $Ax=b$,  the algorithm computes an $\varepsilon$-approximate minimum-norm solution of
    $Ax=b$ or a solution to $A^TAx=A^Tb$ (final $x'$ is approximate solution).} \label{2.3}
\end{algorithm}
Noting that each time $\rho$ is changed, we are attempting to compute an $\varepsilon$-approximate solution within an ellipsoid with $\rho \leq \Vert x_\varepsilon \Vert$, we can apply Theorem \ref{thm2pTA}. The complexity of this problem is $O(\Vert A \Vert^2 \rho^2/ \varepsilon^2)$. Utilizing this information, we obtain the following.

\begin{theorem} Algorithm \ref{2.3} either computes a solution to the normal equation (when $c=0$) or it computes an $\varepsilon$-approximate minimum-norm solution to $Ax=b$. Moreover, if $k$ is the smallest integer such that $\Vert x_\varepsilon \Vert/2^k \leq \varepsilon$, the number of iterations in the while loop is at most $k$. In other words, the complexity of Algorithm \ref{2.3} is  $O(\ln(\Vert x_\varepsilon \Vert/\varepsilon)) \times O(\Vert A \Vert^2 \rho^2/ \varepsilon^2)$. \qed
\end{theorem}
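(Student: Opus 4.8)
The plan is to carry two loop invariants through the outer \texttt{while} loop and then read off both correctness and the iteration count from them. The invariants are: (I1) the current $\overline\rho$ satisfies $\overline\rho \ge \|x_\varepsilon\|$ for the currently stored $\varepsilon$-approximate solution $x_\varepsilon$ of $Ax=b$; and (I2) the closed ellipsoid $E_{A,\underline\rho}$ contains no $\varepsilon$-approximate solution of $Ax=b$, i.e. $\|Ax-b\|>\varepsilon$ for every $\|x\|\le\underline\rho$. Both hold at entry: $\overline\rho=\|x_\varepsilon\|$ by construction, and $\underline\rho=0$ gives $E_{A,0}=\{0\}$ with $\|A\cdot 0-b\|=\|b\|>\varepsilon$ (the degenerate case $\|b\|\le\varepsilon$ being trivial, with $x=0$ already an $\varepsilon$-approximate minimum-norm solution). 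Note (I2) immediately yields $b\notin E^\circ_{A,\underline\rho}$, since a point of $E^\circ_{A,\underline\rho}$ equal to $b$ would be the image of an exact, hence $\varepsilon$-approximate, solution of norm $<\underline\rho$; and (I1)–(I2) together force $\underline\rho<\|x_\varepsilon\|\le\overline\rho$.

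Next I would check that one pass of the outer loop preserves (I1)–(I2). Inside, $\rho=\tfrac12(\overline\rho+\underline\rho)$, and the inner loop is exactly Algorithm \ref{2.1} with the $\rho$-doubling disabled, so by Theorem \ref{thm1TA} (Distance Duality) it must stop either with an iterate $b'=Ax'$ such that $\|b-b'\|\le\varepsilon$, or with a witness $b'\in E_{A,\rho}$ (the exceptional exit $A^T(b-b')=0$ is handled separately below). In the first case $x'$ is an $\varepsilon$-approximate solution, and, being obtained from $x'\gets 0$ by convex combinations with vectors $\rho c/\|c\|$ of norm $\rho$, one has $\|x'\|\le\rho$; hence replacing $x_\varepsilon$ by $x'$ and setting $\overline\rho\gets\rho$ preserves (I1) and leaves (I2) untouched. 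In the witness case, Theorem \ref{thm1TA} gives $b\notin E_{A,\rho}$; combining this with part (4) of Theorem \ref{prop0TA}, namely $(b-b')^Tb/\|A^T(b-b')\|<\|x_*\|$, and with the fact that (when $Ax=b$ is solvable) the smallest ellipsoid $E_{A,\rho'}$ containing $b$ has $\rho'=\|x_*\|$ with $b$ on its boundary, one argues that $\underline\rho\gets\max\{\rho,(b-b')^Tb/\|c\|\}$ still satisfies $b\notin E^\circ_{A,\underline\rho}$ and, with a short additional argument, that $E_{A,\underline\rho}$ still contains no $\varepsilon$-approximate solution; so (I2) is preserved and (I1) is untouched. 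Finally, if the exceptional exit $A^T(b-b')=0$ occurs, part (1) of Theorem \ref{prop0TA} says $x'$ solves the normal equation and $Ax=b$ is unsolvable; the rescaling $x_\varepsilon\gets\alpha x'$ indicated in the text then recovers an $\varepsilon$-approximate solution, matching the first clause of the theorem.

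For termination and the iteration bound I would show that the gap $g=\overline\rho-\underline\rho$ is at least halved on every pass: if $\overline\rho\gets\rho$ the new gap is exactly $\rho-\underline\rho=g/2$, and if $\underline\rho\gets\max\{\rho,\cdot\}\ge\rho$ the new gap is at most $\overline\rho-\rho=g/2$. Since $g$ starts at $\|x_\varepsilon\|$, after $j$ passes $g\le\|x_\varepsilon\|/2^{\,j}$, so the outer loop exits as soon as $\|x_\varepsilon\|/2^{\,j}\le\varepsilon$, i.e. after at most $k=\lceil\log_2(\|x_\varepsilon\|/\varepsilon)\rceil$ passes. At exit $g\le\varepsilon$, and by (I1)–(I2) the pair $(x_\varepsilon,\underline\rho)$ verifies all three conditions of Definition \ref{minnormapprx}: $x_\varepsilon$ is $\varepsilon$-approximate, $\underline\rho\le\|x_\varepsilon\|$ with $b\notin E^\circ_{A,\underline\rho}$, and $\|x_\varepsilon\|-\underline\rho\le g\le\varepsilon$. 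For the overall cost, each outer pass runs the inner loop, which is a run of the Triangle Algorithm over $E_{A,\rho}$ with $\rho\le\|x_\varepsilon\|$ (as $\overline\rho$ never increases and $\rho<\overline\rho$); by Theorem \ref{thm2pTA}, through Theorem \ref{thmzero} and $\mathrm{diam}\,E_{A,\rho}\le\|A\|\rho$, this takes $O(\|A\|^2\rho^2/\varepsilon^2)$ iterations, each dominated by a matrix–vector product. Multiplying by the at most $k=O(\ln(\|x_\varepsilon\|/\varepsilon))$ outer passes gives the claimed bound.

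The step I expect to be the main obstacle is the witness branch: rigorously establishing that setting $\underline\rho=\max\{\rho,(b-b')^Tb/\|c\|\}$ preserves invariant (I2)—not merely the weaker $b\notin E_{A,\rho}$—and keeps $\underline\rho$ below $\|x_\varepsilon\|$. This requires using the quantitative content of part (4) of Theorem \ref{prop0TA} tying the witness to $\|x_*\|$, a careful comparison between $\|x_*\|$ and the norm of the currently stored approximate solution, and consistent bisection bookkeeping in the fact that $\overline\rho$ is reset to $\rho$ rather than to the exact norm of the new $x_\varepsilon$ (and, relatedly, care about whether $x'$ is re-initialized between outer passes so that $\|x'\|\le\rho$ genuinely holds at the moment $\overline\rho$ is updated).
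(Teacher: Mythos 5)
The paper gives essentially no proof of this theorem --- the statement ends with \qed, and the only supporting text is a two-sentence heuristic before it. Your high-level strategy (bisection halving $\overline\rho-\underline\rho$, each pass running TA over $E_{A,\rho}$ with per-pass complexity $O(\|A\|^2\rho^2/\varepsilon^2)$ via Theorem \ref{thm2pTA} and Theorem \ref{thmzero}, then reading off Definition \ref{minnormapprx} at exit) is exactly what the paper intends, and the exit bookkeeping is correct.

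However, your invariant (I2) as stated is too strong and would not survive the witness branch, so the ``short additional argument'' you defer to does not exist. When the inner loop exits with a witness $b'\in E_{A,\rho}$ and $\|b-b'\|>\varepsilon$, the orthogonal bisecting hyperplane of the segment $bb'$ separates $b$ from $E_{A,\rho}$. This tells you only that every $p\in E_{A,\rho}$ satisfies $\|p-b\|\geq \|b-b'\|/2 > \varepsilon/2$ --- \emph{not} $\|p-b\|>\varepsilon$. So $E_{A,\rho}$ (and hence $E_{A,\underline\rho}$) can perfectly well still contain $\varepsilon$-approximate solutions whose residual lies in $(\varepsilon/2,\varepsilon]$, and (I2) fails. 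The Triangle Algorithm is not guaranteed to find an $\varepsilon$-approximate solution before it finds a witness whenever both exist.

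The good news is that (I2) is unnecessary. Definition \ref{minnormapprx} only requires $b\notin E^\circ_{A,\underline\rho}$ together with $\underline\rho\leq\|x_\varepsilon\|$ and $\|x_\varepsilon\|-\underline\rho\leq\varepsilon$. The weaker invariant ``$b\notin E^\circ_{A,\underline\rho}$'' is exactly what the witness update preserves: by Theorem \ref{prop0TA}(3) the witness gives $\rho\|c\|<(b-b')^Tb$, so $\underline\rho\gets\max\{\rho,(b-b')^Tb/\|c\|\}=(b-b')^Tb/\|c\|$, and by part (4) this is $<\|x_*\|$, hence $b\notin E^\circ_{A,\underline\rho}$. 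Your exit argument already invokes only this weak form. Replace (I2) by the weak invariant throughout and the proof closes. Your separate concern about $x',b'$ not being re-initialized between outer passes is well-founded --- the pseudocode sets $x'\gets 0$ only once, so after $\overline\rho$ decreases the carried-over $b'$ may lie outside the shrunk $E_{A,\rho}$ --- but this is an imprecision of the paper's pseudocode rather than of your argument, and the prose makes the intended per-phase re-initialization clear.
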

\begin{figure}[htpb]
	\centering
	\raisebox{3ex}{
	\begin{tikzpicture}[scale=.8]
	\begin{scope}
	\begin{scope}
 [rotate=47.5829]
	\draw \boundellipse{0,0}{2.81}{1.21};
	\filldraw (0,0) circle (.5pt) node[below] {$O$};
	\end{scope}
	\filldraw (0,2.79) circle (.9pt)  node[right] {$b$};
\filldraw (-.2,3.1) circle (.9pt)  node[above] {$\overline b$};
\filldraw (-2.5,-2.79)   node[above] {$E_{A, \underline \rho}$};
\filldraw (-2,-2.2)   node[above] {$E_{A, \rho}$};
\filldraw (-3,-4.1)   node[above] {$E_{A, \rho_*}$};
\filldraw (-3.90,-4.)   node[left] {$E_{A, \overline \rho}$};
	\filldraw (0.9815,2.158)  circle (.9pt)  node[above] {$v_\rho$};
	\filldraw (0.6580,0.75)  circle (.9pt)  node[left] {$b'$};
	\draw[dashed] (-1.37,1.4)--(2.69,2.709);
	\draw (0.6580,0.75)--(0.9815,2.158)--(0,2.79)--(0.6580,0.75);
    \begin{scope}
	[rotate=47.5829]
	\draw \boundellipse{0,0}{5.62}{2.42};
	\filldraw (0,0) circle (.5pt) node[below] {$O$};
	\end{scope}
	\end{scope}
     \begin{scope}
	[rotate=47.5829]
	\draw \boundellipse{0,0}{3.653}{1.573};
	\filldraw (0,0) circle (.5pt) node[below] {$O$};
	\end{scope}
\begin{scope}
	[rotate=47.5829]
	\draw \boundellipse{0,0}{3.653*1.31}{1.573*1.31};
	\filldraw (0,0) circle (.5pt) node[below] {$O$};
	\end{scope}
	\end{tikzpicture}}
	\caption{{\small $E_{A, \overline \rho}$ is initial ellipsoid. Initial $E_{A, \underline \rho}$ is the origin. Thus $\rho=\overline \rho/2$. The ellipsoid
$E_{A, \rho}$ gives a witness, $b'$, and it is expanded to give new $E_{A, \underline \rho}$,  $\underline \rho=\max\{\rho, (b-b')^Tb/\Vert c \Vert\}$. The process is repeated until $\overline \rho - \underline \rho \leq \varepsilon$.}}
	\label{minnormx}
\end{figure}
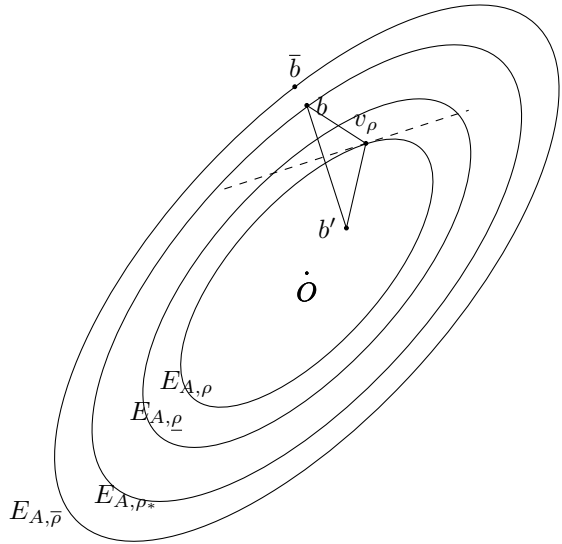
A modified Algorithm \ref{2.3} for when $A$ symmetric PSD can be stated.

\section{The First-Order Centering Triangle Algorithm (CTA)} \label{sec3}

Having described the Triangle Algorithm (TA), inspired by it, in this section, we introduce the {\it first-order} {\it Centering Triangle Algorithm} (CTA). CTA is an iterative method for solving $Ax=b$ or $A^TAx=A^Tb$. It is the first member of a family of iterative functions with a simple iterative formula but possessing powerful convergence properties, which will be described and proved here. The first-order CTA can be interpreted as a dynamic version of TA. Therefore, it can be described both geometrically and algebraically.

\subsection{CTA as a Geometric Algorithm}
\label{sec3.1}

When solving $Ax=b$, given $x' \in \mathbb{R}^n$, we compute $b' = Ax'$. The residual is then $r' = A(x-x') = b - b'$. To calculate the next iterate $b'' = Ax''$, we set $b''$ as the sum of $b'$ and the orthogonal projection of the vector of the residual $r'$ onto $Hr'$, where $H=AA^T$. However, for the case of $A$ symmetric PSD, we can take $H=A$. It's important to note that when $Hr'=0$, $A^TAx'=A^Tb$, and thus, if $Ax=b$ is solvable, $Ax'=b$. The orthogonal projection of a given $u \in \mathbb{R}^m$ onto a given nonzero $v \in \mathbb{R}^m$ is ${u^Tv}/{\Vert v \Vert^2}v$. Using this,
\begin{equation} \label{CTA1}
 b'' = b' + \alpha(r') H r',  \quad \alpha(r') =  \frac{{r'}^T\!\! H r'}{{r'}^TH^2r'}.
\end{equation}
Equivalently, the new residual $r''=b-b''$ and approximate solution $x''$ satisfy $r''=b-Ax''$, where
\begin{equation}  \label{CTAxnew}
\quad  r'' =F_1(r')= r' - \alpha(r') H r', \quad
x''= x'+ \begin{cases}
\alpha(r') r', & \text{if ~} H=A\\
\alpha(r') A^Tr', & \text{if~} H=AA^T.
\end{cases}
\end{equation}
When $H=A$, $(x''-x')$ is a scalar multiple of the residual $r'$, and when $H=AA^T$, $(x''-x')$ is a scalar multiple of the {\it least-squares} residual $A^Tr'$.
\subsection{CTA as an Algebraic Algorithm} \label{sec3.2}
Given residual $r' \in \mathbb{R}^m$, with $H r' \not  =0$, the next residual $r''$ is given as
\begin{equation}  \label{F1def}
r''=F_1(r')=r'-  \alpha_{1,1}(r')Hr',  \quad
\alpha_{1,1}(r')= {\rm argmin} \{\Vert r'- \alpha Hr' \Vert^2: \alpha \in \mathbb{R} \} =\frac{{r'}^THr'}{{r'}^TH^2r'}.
\end{equation}

Algorithm \ref{3.1} describes  first-order CTA. In the implementation of the algorithm, when $H=AA^T$, it will only be used implicitly and thus there is no need to compute the product explicitly.
In the implementation of Algorithm \ref{3.1}, we may apply preconditioning  to the matrix $H$.  For instance, a simple preconditioning method is to pre and post-multiplying $H$ by $D^{-1/2}$, where $D$ is the diagonal matrix of diagonal entries of $H$.

\begin{algorithm}[!htb]
\scriptsize
\SetAlgoNoLine
\KwIn{$A \in \mathbb{R}^{m \times n}$, $b \in \mathbb{R}^m$, $\varepsilon \in (0, 1)$.}
$x' \gets 0$ (or $x'=A^Tw'$, $w' \in \mathbb{R}^m$ random), $r' \gets b-Ax'$.

\While{$ {\rm (}\|r'\|  > \varepsilon {\rm )} $ $\wedge$ $ {\rm (} r'^T H r'> \varepsilon {\rm )}$}{
$\alpha \gets \frac{{r'}^T\!\! Hr'}{{r'}^TH^2r'}$,

$r' \gets F_1(r')= r' - \alpha H r'$, \quad $x' \gets  x'+ \begin{cases}
\alpha r', & \text{if ~} H=A\\
\alpha A^Tr', & \text{if~} H=AA^T. \end{cases}$}
\caption{(CTA) Iteration of $F_1$ for computing an $\varepsilon$-approximate solution of $Ax=b$ or $A^TAx=A^Tb$ (final $x'$ is approximate solution).} \label{3.1}
\end{algorithm}

As in the Triangle Algorithm, the dominant computational part of the algorithm is matrix-vector multiplications. While $b''=Ax''$, the algorithm first computes  $b''$ and later $x''$.  The main work in each iteration is computing $Hr'$. This takes two matrix-vector operations when $H=AA^T$ and one when $H=A$. All  other computations take $O(m+n)$ operations. If the number of nonzero entries of $A$ is $N$, each matrix-vector computation takes $O(N)$ elementary operations.

If the algorithm terminates with $r'^THr' \leq \varepsilon$,  when $H=AA^T$, it implies $\Vert A^T r' \Vert \leq \sqrt{\varepsilon}$, and when $H=A$ it implies  $\Vert A^{1/2} r' \Vert \leq \sqrt{\varepsilon}$. In either case, this simple algorithm is capable of approximating a solution to the linear system or to the normal equation. We will derive bounds on the iteration complexity for the symmetric positive definite case of $H$ as well as positive semidefinite case.

\begin{remark}
Similar to the Triangle Algorithm, the first-order CTA is based on two-term recurrences that generate two vector sequences: $r_k$ and $x_k$. The overall computational cost of CTA per step is comparable to that of the Bi-Conjugate Gradient (BiCG) method for solving square and possibly nonsymmetric linear algebraic systems. Due to the short recurrences, the cost and storage requirements of CTA per step remain fixed throughout the iteration.
\end{remark}

\begin{example} \label{example1} Here we consider an example of a single iterate of first-order CTA for a symmetric PSD matrix.
Consider $Ax=b$, where $H=A={\rm diag}(1,2, \dots, m)$, $b=(1, \dots, 1)^T$.  We compute the error in the first iteration of CTA. With $x_0=0$, $r_0=b$.
It is easy to show that in general  with $\phi_i(r)=r^TH^ir$,
$\Vert F_1(r_0) \Vert^2 = \Vert r_0 \Vert^2- {\phi^2_1(r_0)}/ {\phi_2(r_0)}.$
Using that $\Vert r_0 \Vert^2=m$, $\phi_1(r_0)= \sum_{j=1}^m j=m(m+1)/2$,
$\phi_2(r_0)= \sum_{j=1}^m j^2=m(m+1)(2m+1)/6$, we get
${\Vert F(r_0) \Vert}/{\Vert r_0 \Vert}= \sqrt{(m-1)/{2(2m+1)}} \approx \frac{1}{2}.$
\end{example}

\begin{example} \label{example1p} Here we consider the example as in the previous case except that we use $H=AA^T={\rm diag}(1^2,2^2, \dots, m^2)$, $x_0=0$, $r_0=b$. We would expect the ration of $\Vert F_1(r_0) \Vert/ \Vert r_0 \Vert$ to be worse. In this case
$\phi_1(r_0)= \sum_{j=1}^m j^2=m(m+1)(2m+1)/6 \approx m^3/3$.  Moreover,
$\phi_2(r_0)= \sum_{j=1}^m j^4 \approx m^5/5$.
Thus $\Vert F_1(r_0) \Vert^2 \approx m- \frac{5}{9}m= \frac{4}{9}m.$ Hence
${\Vert F_1(r_0) \Vert}/{\Vert r_0 \Vert}  \approx {2}/{3}.$
In other words, while the condition number of $H$ in the examples are $m$ and $m^2$, respectively, the corresponding reduction of residual,  $1/2$ and $2/3$ is  reasonably close.
\end{example}

\subsection{Interpretation of CTA for Symmetric PSD Matrices} \label{sec3.3}

In this subsection we give an interpretation of the first-order CTA for the case where $A$ is symmetric PSD.

\begin{theorem}  Consider the linear system $Ax=b$, where  $A$ is symmetric PSD. Applying  Algorithm \ref{3.1} for this case can be interpreted as applying the algorithm for the general matrix to solve $A^{1/2} y= b$, as if computing its minimum-norm solution, followed by an adjustment of the iterate via an implicit use of $A^{1/2}$.
\end{theorem}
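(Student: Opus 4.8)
The goal is to show that running Algorithm \ref{3.1} on a symmetric PSD matrix $A$ (so $H=A$) is the same computation as running the general-matrix version of Algorithm \ref{3.1} on the reformulated system $A^{1/2}y=b$ (so that there $H = A^{1/2}(A^{1/2})^T = A$), interpreting the latter as the quest for the minimum-norm solution $y_* = A^{1/2}x$, and then recovering the $x$-iterate by an implicit application of $A^{1/2}$. So I want to show the $b'$-iterates and residuals coincide verbatim, while the $x$-iterates of the PSD version are exactly the images under $A^{1/2}$-inversion, in the appropriate sense, of the $y$-iterates of the general version.

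The plan is to run the two iterations in parallel and match them term by term. First I would write down the general-matrix CTA of Algorithm \ref{3.1} applied to $\widetilde A := A^{1/2}$, so that $\widetilde H = \widetilde A \widetilde A^T = A^{1/2}A^{1/2}=A$; its update is $r'' = r' - \alpha A r'$ with $\alpha = (r'^T A r')/(r'^T A^2 r')$, and $y'' = y' + \alpha \widetilde A^T r' = y' + \alpha A^{1/2} r'$, starting from $y_0 = \widetilde A^T w_0 = A^{1/2}w_0$. Next, I would record the PSD-case CTA, i.e.\ Algorithm \ref{3.1} with $H=A$: $r'' = r' - \alpha A r'$ with the \emph{same} $\alpha$, and $x'' = x' + \alpha r'$, starting from $x_0 = 0$ (or $x_0 = A^T w_0 = A w_0$). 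Since the residual recurrence $r'' = F_1(r') = r' - \alpha A r'$ and the scalar $\alpha$ depend only on $A$ and $r'$ and are literally identical in both runs, an easy induction shows the residual sequences $\{r_k\}$ (hence the $\{b_k\}$) agree exactly, provided the initial residuals agree — which holds because $b - A^{1/2}y_0 = b - A^{1/2}(A^{1/2}w_0) = b - A w_0 = b - A x_0$ when we pick the matching starting vector.

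Then I would establish the relationship between the two solution-iterates. Claim: for every $k$, $y_k = A^{1/2} x_k$. This follows by induction: it holds at $k=0$ since $A^{1/2}x_0 = A^{1/2}(Aw_0) = A^{1/2}A^{1/2}(A^{1/2}w_0)$... here I should be careful and instead just take $x_0=0$, $y_0=0$ (the default initialization), so the base case is trivial; and for the step, $A^{1/2}x'' = A^{1/2}(x' + \alpha r') = A^{1/2}x' + \alpha A^{1/2}r' = y' + \alpha A^{1/2}r' = y''$, using the inductive hypothesis and the fact that the two runs share the same $\alpha$ and $r'$. This is exactly the statement that $x_k$ is obtained from $y_k$ by an implicit application of $A^{1/2}$ (namely $y_k = A^{1/2}x_k$), and that the PSD algorithm never needs to form $A^{1/2}$ because it propagates $x_k$ directly. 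Finally, I would invoke Proposition \ref{minnorm}: $A^{1/2}y = b$ is solvable iff $Ax=b$ is, and if $x$ solves $Ax=b$ then $y_* = A^{1/2}x$ is the minimum-norm solution of $A^{1/2}y=b$; so when the general CTA on $A^{1/2}y=b$ converges it is converging to $y_*$ (an $\varepsilon$-approximate minimum-norm solution, since $y_k = \widetilde A^T w_k$ stays in the range of $\widetilde A^T$), and the adjustment $x_k$ with $A^{1/2}x_k = y_k$ is then an $\varepsilon$-approximate solution of $Ax=b$ — matching the summary's description that in this case the minimum-norm property is for $A^{1/2}y=b$ rather than for $Ax=b$ itself.

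I expect the only real subtlety — and hence the "main obstacle" — to be bookkeeping about initialization and about the direction of the iterates rather than any analytic difficulty: one must be careful that $y_k=A^{1/2}x_k$ (not $x_k=A^{1/2}y_k$), that the stopping test $r'^T H r' = r'^T A r' = \|A^{1/2}r'\|^2$ is literally the $\|\widetilde A^T r'\|^2$-test of the general algorithm, and that the $\widehat c$, $v_\rho$-type quantities (if one runs the comparison through the TA-flavored description) reduce correctly under the $A^{1/2}$ substitution exactly as in the passage preceding Algorithm \ref{2.2}. Once the parallel-induction is set up with matching initial data, the verification is a one-line computation at each step, so I would present it as a clean double induction (one for $\{r_k\}$, one for $y_k = A^{1/2}x_k$) and then a short paragraph citing Proposition \ref{minnorm} for the minimum-norm interpretation.
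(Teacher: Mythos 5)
Your proposal is correct and matches the paper's own argument in substance: both identify that the general-matrix CTA applied to $\widetilde A = A^{1/2}$ yields $\widetilde H = A^{1/2}(A^{1/2})^T = A$, produces the identical residual recurrence $r'' = r' - \alpha A r'$ with the same $\alpha$, and gives a $y$-update $y'' = y' + \alpha A^{1/2} r'$ from which $A^{1/2}$ can be factored to obtain $y'' = A^{1/2}(x' + \alpha r') = A^{1/2}x''$, with Proposition \ref{minnorm} supplying the minimum-norm interpretation. You merely state the one-step observation in the paper more formally as a double induction and add the initialization bookkeeping (correctly resolving it by taking $x_0 = y_0 = 0$), but the decomposition and the invariant $y_k = A^{1/2}x_k$ are the same.
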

\begin{proof} If $A^{1/2} y= b$ is solvable, by Proposition \ref{minnorm}
its minimum-norm solution is the solution of $Ax=b$. That is, if $x$ is any solution of the latter equation, $y_*=A^{1/2}x$ is the minimum-norm solution of the former.   Now applying Algorithm \ref{3.1} for general matrix to the system $A^{1/2} y= b$, $H=A^{1/2}A^{1/2}=A$ so that given the current residual $r'=b - b'$, where $b'=A^{1/2} y'$, from (\ref{CTAxnew}) the next residual is
\begin{equation}
r''= r' - \frac{r'^TAr'}{r'^TA^2r'}Ar', \quad
y''= y'+ \frac{{r'}^T\!\!Ar'}{{r'}^TA^2r'} A^{1/2} r' = A^{1/2}  \bigg (
x'+ \frac{{r'}^T\!\!Ar'}{{r'}^TA^2r'}  r' \bigg).
\end{equation}
Thus we set $x''= x' + ({{r'}^T\!\!Ar'}/{{r'}^TA^2r'})r'$.
\end{proof}

\subsection{CTA as a Dynamic Version of Triangle Algorithm} \label{sec3.4}

Consider the iterative step in solving $Ax=b$ via Algorithm \ref{2.1}. For a given $x' \in \mathbb{R}^n$ with $b'=Ax' \in E_{A, \rho}$ and residual $r'=b-b'$, if $c=A^T(b-b') \not =0$, the algorithm computes $v_\rho = Ac/\Vert c\Vert$. If $\rho \geq (b-b')^Tb/\Vert c \Vert$, it sets $\alpha =(b - b')^T(v_\rho - b')/|v_\rho - b'|^2$ and defines $b''=Ax''$, where $b''=(1-\alpha) b' + \alpha v_\rho$, and $x''=(1-\alpha) x' + \alpha {\rho c}/{\Vert c \Vert}$.

Alternatively, let $y=x-x'$ and consider the iterative step of Algorithm \ref{2.1} with respect to the equation $Ay=r'$ at $y'=0$ over $E_{A, \rho'}$, where $\rho'$ is the smallest radius $\rho$ so that the corresponding $v_{\rho}$, denoted by $v'$, is a strict pivot at $y'=0$. Noting that $r'-Ay'=r'$, it follows that

\begin{equation}
\rho'= \frac{\Vert r' \Vert^2}{\Vert A^Tr' \Vert},  \quad v'=\rho' \frac{AA^Tr'}{\Vert A^Tr' \Vert}.
\end{equation}
Thus
\begin{equation} \label{eq99}
v'=\frac{\Vert r' \Vert^2}{\Vert A^Tr' \Vert^2} AA^Tr'.
\end{equation}
Denoting the corresponding step-size $\alpha$ by $\alpha'$, computing it and substituting  for $v'$ from (\ref{eq99}), and substituting $H=AA^T$, we have
\begin{equation}
\alpha' = \frac{{r'}^Tv'}{\Vert v' \Vert^2}=  \frac{ \Vert r' \Vert^2}{\Vert v' \Vert^2}= \frac{\Vert A^T r' \Vert}{\Vert AA^Tr' \Vert^2}= \frac{{r'}^THr'}{{r'}^TH^2 r'}.
\end{equation}
The new residual $r''$, hence $b''$ and $x''$ are,
\begin{equation}
r''= \alpha' v' = \frac{{r'}^THr'}{{r'}^TH^2r'} Hr', \quad
b''=b'+ \frac{{r'}^THr'}{{r'}^TH^2r'} Hr', \quad x''=x'+ \frac{{r'}^THr'}{{r'}^TH^2r'} A^Tr'.
\end{equation}

\begin{theorem} The iteration of CTA in Algorithm \ref{3.1} to solve $Ax=b$ at a given iterate $b'=Ax'$ corresponds to the iteration of TA in Algorithm \ref{2.1} to solve $Ay=r'$, where $r'=b-b'$, at $y'=0$, over the ellipsoid $E_{A,\rho'}$. Here, $\rho'$ is defined as ${\Vert r' \Vert^2}/\Vert A^Tr'\Vert$, which represents the smallest radius ball that allows $v'$ in (\ref{eq99}) to function as a strict pivot. This is followed by computing the next iterate of TA, $b''=Ax''$, and determining the corresponding residual $r''=b-b''$. \qed
\end{theorem}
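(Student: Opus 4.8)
The statement is a bookkeeping identity asserting that a single CTA step coincides, after the change of variables $y = x - x'$, with a single TA step applied to $Ay = r'$ from $y' = 0$ over $E_{A,\rho'}$. The plan is to write out both updates in closed form and match them term by term, drawing $\rho'$, $v'$, and the step-size out of Theorem \ref{prop0TA} and Proposition \ref{prop1TA}.

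First I would record the CTA step from (\ref{CTA1})--(\ref{CTAxnew}): with $H = AA^T$, $b' = Ax'$, $r' = b - b'$ and $Hr' \neq 0$, CTA produces $b'' = b' + \alpha(r') Hr'$, $x'' = x' + \alpha(r') A^T r'$, hence $r'' = b - b'' = r' - \alpha(r') Hr' = F_1(r')$, where $\alpha(r') = (r'^T H r')/(r'^T H^2 r')$. Then I would instantiate Algorithm \ref{2.1} on the system $Ay = r'$ at the iterate $y' = 0$: here the target point plays the role of $b$ and equals $r'$, the current iterate plays the role of $b'$ and equals $Ay' = 0$, so $c = A^T(r' - 0) = A^T r'$ and $\Vert c \Vert^2 = r'^T A A^T r' = r'^T H r'$. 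By Theorem \ref{prop0TA}(3), $v_\rho$ is a strict pivot at $y' = 0$ iff $\rho \Vert c \Vert \geq (r' - 0)^T r' = \Vert r' \Vert^2$; thus the least admissible radius is exactly $\rho' = \Vert r' \Vert^2 / \Vert A^T r' \Vert$, which establishes the parenthetical claim about $\rho'$. By Theorem \ref{prop0TA}(2), the corresponding pivot is $v' = v_{\rho'} = \rho' A c / \Vert c \Vert = (\Vert r' \Vert^2 / \Vert A^T r' \Vert^2) A A^T r' = (\Vert r' \Vert^2 / \Vert A^T r' \Vert^2) H r'$, which is (\ref{eq99}).

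Next I would apply Proposition \ref{prop1TA} to the $Ay = r'$ problem: the step-size is $\alpha' = (r' - 0)^T (v' - 0)/\Vert v' - 0 \Vert^2 = r'^T v'/\Vert v' \Vert^2$. Using $r'^T v' = \Vert r' \Vert^2$ --- equivalently $(r')^T(v' - r') = 0$, which holds because at the minimal radius the strict-pivot inequality (\ref{pivotsTA}) is tight --- together with $\Vert v' \Vert^2 = (\Vert r' \Vert^4 / \Vert A^T r' \Vert^4) r'^T H^2 r'$, and then substituting back $v'$ and using $\Vert A^T r' \Vert^2 = r'^T H r'$, one checks $\alpha' v' = \alpha(r') H r'$; the TA increment to the $x$-vector, namely $\alpha' \rho' c/\Vert c \Vert$, simplifies the same way to $\alpha(r') A^T r'$. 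Hence the TA iterate for $Ay = r'$ has image $\alpha(r') H r'$ and $x$-part $\alpha(r') A^T r'$; translating back via $b'' = b' + A y''$ and $x'' = x' + y''$ reproduces exactly the CTA $b''$ and $x''$, and the residual of $Ay = r'$ after this step, $r' - \alpha' v'$, equals $b - b'' = r''$. The only nonroutine point is this last matching of step-sizes: $\alpha'$ is a ratio of a linear to a quadratic form in $v'$ while $\alpha(r')$ is a ratio of two quadratic forms in $r'$, so they look different until $v' = (\Vert r' \Vert^2/\Vert A^T r' \Vert^2) H r'$ is inserted and the identity $\Vert A^T r' \Vert^2 = r'^T H r'$ (valid precisely because $H = AA^T$) is used; I would also flag that it is the minimality of $\rho'$ that makes the TA step land on $F_1(r')$ rather than overshooting or undershooting.
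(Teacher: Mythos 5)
Your argument follows the paper's strategy essentially verbatim: recast the CTA step as a TA step on $Ay=r'$ from $y'=0$, extract $\rho'$ and $v'$ from Theorem \ref{prop0TA}, apply Proposition \ref{prop1TA} to get the step-size, and translate back to $(b'',x'',r'')$. However, you are more careful than the paper at the step-size comparison, and your care in fact exposes a slip in the paper's displayed chain. The paper asserts
$\alpha' = \Vert r'\Vert^2/\Vert v'\Vert^2 = \Vert A^Tr'\Vert/\Vert AA^Tr'\Vert^2 = r'^THr'/r'^TH^2r'$,
but substituting $\rho'=\Vert r'\Vert^2/\Vert A^Tr'\Vert$ into $\Vert v'\Vert^2 = {\rho'}^2\Vert AA^Tr'\Vert^2/\Vert A^Tr'\Vert^2$ gives $\alpha' = \Vert A^Tr'\Vert^4/(\Vert r'\Vert^2\Vert AA^Tr'\Vert^2)$, which is \emph{not} the CTA step-size $\alpha(r')=\Vert A^Tr'\Vert^2/\Vert AA^Tr'\Vert^2$ unless $\Vert A^Tr'\Vert=\Vert r'\Vert$ (a concrete check: $A=\mathrm{diag}(1,2)$, $r'=(1,1)^T$ gives $\alpha'=25/34$ but $\alpha(r')=5/17$). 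The paper's displayed line $r''=\alpha'v'$ should likewise read $r''=r'-\alpha'v'$. You correctly identify the identities that actually matter and that do hold, namely the compound ones $\alpha' v' = \alpha(r')Hr'$ and $\alpha'\rho' c/\Vert c\Vert = \alpha(r')A^Tr'$, and you give the clean reason they hold: $r'^Tv'=\Vert r'\Vert^2$ (tightness of the strict-pivot inequality at the minimal radius $\rho'$) combined with $\Vert A^Tr'\Vert^2=r'^THr'$. So the route and the final formulas for $b''$, $x''$, $r''$ agree with the paper, but your intermediate computations are correct where the paper's displayed equalities are not.
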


\begin{remark}
As shown, CTA can be considered a dynamic version of TA, and in this regard, it appears to be more powerful. However, these two algorithms can actually complement each other. For example, suppose we are given an $\varepsilon$-approximate solution to a linear system $Ax=b$. We may wonder if we can obtain such an approximate solution with a smaller norm. TA can achieve this with Algorithm \ref{2.3}. The same question may arise concerning an approximate solution of the normal equation. Furthermore, there may be situations where we aim to find the best solution to a linear system but with a constraint on its norm. This can be accomplished using Algorithm \ref{2.1}.
\end{remark}

\subsection{An Auxiliary Lemma on Symmetric PSD Matrices} \label{sec3.5}
Our goal here is to state an important auxiliary lemma regarding PSD matrices. This lemma plays a significant role in the analysis of the convergence properties of the first-order CTA. The same result will also be used in the analysis of the convergence of high-order CTA.

\begin{lemma} \label{lem1} Let $H$ be an $m \times m$ symmetric PSD matrix, $H \not =0$.  Let $\kappa^+$ denote the ratio of its largest to smallest positive eigenvalues.

(i) Suppose $H$ is positive definite, i.e. $\kappa^+=\kappa$, its condition number.
For any  $x \not =0$,
\begin{equation}  \label{lema1eq1}
\frac{(x^TH x)^2}{x^TH^2x}  \frac{1}{\Vert x \Vert^2} \geq  \frac{4\kappa}{(\kappa+1)^2}.
\end{equation}

Moreover, if $u_{\min}$ and $u_{\max}$ are orthogonal unit-norm eigenvectors corresponding to the eigenvalues $\lambda_{\min}$ and $\lambda_{\max}$, respectively, then equality is achieved in (\ref{lema1eq1}) when $x$ is any nonzero multiple of the following:

\begin{equation} \label{lema1eq1xxz}
\sqrt{\frac{\kappa}{\kappa+1}} u_{\min} + \sqrt{\frac{1}{\kappa+1}} u_{\max}
\end{equation}

(ii) If $x \not =0$ is a linear combination of eigenvectors of $H$ corresponding to positive eigenvalues,
\begin{equation}  \label{lema1eq2}
\frac{(x^TH x)^2}{x^TH^2x}  \frac{1}{\Vert x \Vert^2} \geq  \frac{4\kappa^+}{(\kappa^++1)^2}.
\end{equation}

(iii) Let $c(H)={1}/{\kappa^+ \lambda_{\max}}$.
If for a given $x$, $x^THx >0$, then
\begin{equation} \label{LEM1eq2}
\frac{(x^TH x)^2}{x^TH^2x} \geq c(H) \times x^T Hx.
\end{equation}

\end{lemma}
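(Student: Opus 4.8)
The plan is to reduce all three parts to a single one-dimensional inequality about the first two moments of a discrete probability distribution supported on the eigenvalues of $H$. Diagonalize $H$ in an orthonormal eigenbasis $u_1,\dots,u_m$ with eigenvalues $\lambda_1,\dots,\lambda_m\ge 0$, write $x=\sum_i c_iu_i$, and set $p_i=c_i^2/\Vert x\Vert^2$, so that the $p_i$ form a probability vector and
\[
\frac{(x^THx)^2}{x^TH^2x}\cdot\frac{1}{\Vert x\Vert^2}=\frac{\big(\sum_i\lambda_ip_i\big)^2}{\sum_i\lambda_i^2p_i}.
\]
Everything then comes down to lower-bounding the right-hand side, i.e.\ to upper-bounding $\big(\sum_i\lambda_i^2p_i\big)\big/\big(\sum_i\lambda_ip_i\big)^2$.

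For part (i) I will use the elementary quadratic bound: since each $\lambda_i\in[\lambda_{\min},\lambda_{\max}]$, $(\lambda_i-\lambda_{\min})(\lambda_{\max}-\lambda_i)\ge 0$, hence $\lambda_i^2\le(\lambda_{\min}+\lambda_{\max})\lambda_i-\lambda_{\min}\lambda_{\max}$; averaging against the $p_i$ and writing $t=\sum_i\lambda_ip_i\in[\lambda_{\min},\lambda_{\max}]$ gives $\sum_i\lambda_i^2p_i\le(\lambda_{\min}+\lambda_{\max})t-\lambda_{\min}\lambda_{\max}$. Thus the ratio is at least $g(t):=t^2/\big((\lambda_{\min}+\lambda_{\max})t-\lambda_{\min}\lambda_{\max}\big)$, and a one-variable calculus check shows $g$ attains its minimum over $[\lambda_{\min},\lambda_{\max}]$ at the harmonic mean $t^\star=2\lambda_{\min}\lambda_{\max}/(\lambda_{\min}+\lambda_{\max})$, with $g(t^\star)=4\lambda_{\min}\lambda_{\max}/(\lambda_{\min}+\lambda_{\max})^2=4\kappa/(\kappa+1)^2$. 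This yields \eqref{lema1eq1}. For the equality case I will trace both inequalities backwards: equality in the quadratic bound forces the $p_i$ to be supported on $\{\lambda_{\min},\lambda_{\max}\}$, and equality in the calculus step forces $t=t^\star$; solving $q\lambda_{\min}+(1-q)\lambda_{\max}=t^\star$ gives $q=\kappa/(\kappa+1)$ for the $\lambda_{\min}$-weight and $1/(\kappa+1)$ for the $\lambda_{\max}$-weight, which is exactly the statement that $x$ is a nonzero multiple of $\sqrt{\kappa/(\kappa+1)}\,u_{\min}+\sqrt{1/(\kappa+1)}\,u_{\max}$ (the signs of the two components being free).

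Part (ii) is the same argument with the spectrum restricted: when $x$ lies in the span of the eigenvectors for the positive eigenvalues, only indices with $\lambda_i\in[\lambda^+_{\min},\lambda_{\max}]$ occur in the sums, where $\lambda^+_{\min}$ is the smallest positive eigenvalue, so replacing $[\lambda_{\min},\lambda_{\max}]$ by $[\lambda^+_{\min},\lambda_{\max}]$ and $\kappa$ by $\kappa^+=\lambda_{\max}/\lambda^+_{\min}$ throughout gives \eqref{lema1eq2}. For part (iii) I will split $x=x_0+\widetilde x$ with $x_0\in\ker H$ and $\widetilde x$ in the span of the positive-eigenvalue eigenvectors; then $x^THx=\widetilde x^TH\widetilde x$ and $x^TH^2x=\widetilde x^TH^2\widetilde x$, and $x^THx>0$ forces $\widetilde x\neq 0$. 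Expanding $\widetilde x$ in the eigenbasis and using $\lambda_i^2\le\lambda_{\max}\lambda_i$ for $\lambda_i\in(0,\lambda_{\max}]$ gives $x^TH^2x\le\lambda_{\max}\,x^THx$, whence $(x^THx)^2/(x^TH^2x)\ge x^THx/\lambda_{\max}\ge x^THx/(\kappa^+\lambda_{\max})=c(H)\,x^THx$, the last step because $\kappa^+\ge 1$; alternatively \eqref{LEM1eq2} follows from part (ii) together with $\Vert x\Vert^2\ge\Vert\widetilde x\Vert^2\ge x^THx/\lambda_{\max}$ and $4\kappa^+/(\kappa^++1)^2\ge 1/\kappa^+$.

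The only genuinely delicate point is part (i): pinning down the minimizer of $g$ on the interval and then carrying the equality analysis cleanly back through both inequalities to recover the extremal vector in the normalized form \eqref{lema1eq1xxz}. Once (i) is in place, (ii) is a verbatim re-run on a subspace and (iii) is bookkeeping.
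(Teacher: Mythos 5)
Your proposal is correct, and for part (i) it takes a genuinely different route from the paper. The paper proves (i) by setting up the constrained minimization directly, applying the Lagrange multiplier conditions to argue that any minimizer can have at most two nonzero coordinates (since otherwise several distinct $\lambda_i$'s would solve the same quadratic equation), and then solving the resulting two-variable problem by elementary calculus. You instead recognize the statement as the Kantorovich inequality and run its classical proof: write the ratio in terms of a probability vector $p_i$ on the spectrum, use the pointwise quadratic bound $(\lambda_i-\lambda_{\min})(\lambda_{\max}-\lambda_i)\ge 0$ to replace the second moment by an affine function of the first moment $t$, and then minimize the resulting one-variable function $g(t)=t^2/\big((\lambda_{\min}+\lambda_{\max})t-\lambda_{\min}\lambda_{\max}\big)$ over $[\lambda_{\min},\lambda_{\max}]$. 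Both are sound; your route is shorter, avoids the argument that a minimizer concentrates on two eigenvalues (that fact drops out of tracing the equality cases rather than being needed up front), and naturally yields the same extremal weights $q=\kappa/(\kappa+1)$ and $1-q=1/(\kappa+1)$.

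For part (ii) you do exactly what the paper does: restrict to the span of the positive-eigenvalue eigenvectors and invoke part (i) on that subspace. For part (iii) your primary argument, using $\lambda_i^2\le\lambda_{\max}\lambda_i$ to get $x^TH^2x\le\lambda_{\max}\,x^THx$ and hence $(x^THx)^2/(x^TH^2x)\ge x^THx/\lambda_{\max}$, is actually simpler and slightly stronger than the paper's chain (the paper routes through (ii) together with $\|y\|^2\ge x^THx/\lambda_{\max}$ and $4\kappa^+/(\kappa^++1)^2\ge 1/\kappa^+$, which is exactly your ``alternative'' version). Either way the inequality \eqref{LEM1eq2} follows since $\kappa^+\ge 1$. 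One small point worth making explicit if you write this up: in the equality analysis for (i), equality in the averaged quadratic bound constrains the support of $(p_i)$ to eigenvalues in $\{\lambda_{\min},\lambda_{\max}\}$, and if those eigenvalues have multiplicity greater than one the extremal set is correspondingly larger; but the exhibited vector \eqref{lema1eq1xxz} certainly attains equality, which is all the lemma asserts.
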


\begin{proof}  Let $u_1, \dots, u_m$ be an orthonormal set of eigenvectors of $H$. Let $\lambda_1, \dots, \lambda_m$ be the
corresponding eigenvalues. Without loss of generality assume
$\lambda_1 \leq  \cdots  \leq \lambda_m$.

(i):  If $ \lambda_1= \lambda_m$,  (i) is trivial. Thus assume there are at least two distinct eigenvalues. We show
\begin{equation} \label{eqrho2}
{\rm inf} \bigg \{ \frac{(w^TH w)^2}{w^TH^2w} \frac{1}{\Vert w \Vert}:   w \not =0  \bigg \} =  \min \bigg \{ \frac{(w^TH w)^2}{w^TH^2w}: \Vert  w \Vert^2 = 1 \bigg \} =
\min \bigg \{ \frac{(\sum_{i=t}^m \lambda_i x_i^2)^2}{(\sum_{i=t}^m \lambda^2_i x_i^2)}: \sum_{i=t}^m x_i^2=1 \bigg \}.
\end{equation}
The first equality in (\ref{eqrho2}) follows since the objective function is homogeneous of degree zero in $w$, i.e. replacing $w$ by $\alpha w$, $\alpha \not =0$, the objective function remains unchanged. Thus we may assume $\Vert w \Vert =1$.

Given $w \in \mathbb{R}^m$ we can write $w = \sum_{i=1}^m x_i u_i$. Thus
\begin{equation}
w^Tw = \sum_{i=1}^m x_i^2, \quad w^THw= \sum_{i=t}^m \lambda_i x_i^2, \quad w^TH^2w= \sum_{i=t}^m \lambda^2_i x_i^2.
\end{equation}
Substituting these into the objective function in (\ref{eqrho2}) we get the proof of the second equality.

Now consider the last optimization in (\ref{eqrho2}). If two  eigenvalues $\lambda_i$ and $\lambda_j$ are identical we combine the corresponding sum $x_i^2+x_j^2$ into a single variable, say $y_i^2$, thereby simplifying the optimization problem into a corresponding one in dimension one less. Thus we assume eigenvalues are distinct.
Let $x^*=(x_1^*, \dots, x_m^*)^T$ be an optimal solution of this optimization in (\ref{eqrho2}).
We claim $x_i^*$ is nonzero only for two distinct indices.
To prove the claim, from the Lagrange multiplier optimality condition applied to this optimization we  have
\begin{equation} \label{eq333}
4 \lambda_i x^*_i \big (\sum_{i=1}^m \lambda_i {x^*_i}^2 \big ) \big (\sum_{i=1}^m \lambda^2_i {x^*_i}^2 \big )- 2 \lambda^2_i x^*_i \big ( \sum_{i=1}^m \lambda_i {x^*_i}^2 \big )^2=
\delta x^*_i,   \quad i=1, \dots, m,
\end{equation}
where $\delta$ is a  constant. Dividing each equation in (\ref{eq333}) with $x^*_i \not =0$ by $x^*_i$ and  letting $\delta'= \delta/ (\sum_{i=1}^m \lambda_i {x^*_i}^2)$ we get
\begin{equation} \label{eqn22}
4 \lambda_i \sum_{i=1}^m \lambda^2_i {x^*_i}^2- 2 \lambda^2_i \sum_{i=1}^m \lambda_i {x^*_i}^2=
\delta', \quad i \in \{1, \dots, m\}, \quad x^*_i \not =0.
\end{equation}

If more than two of $x^*_i$'s in the equations of (\ref{eqn22}) are nonzero, the corresponding $\lambda_i$'s are distinct roots of the same quadratic equation in $\lambda$, defined  by (\ref{eqn22}), with the summations as its coefficients. But this is a contradiction since the quadratic equation has at most two solutions. Thus there exist only two indices $1 \leq i < j \leq m$, where $x^*_i, x^*_j$ are nonzero. Set $\gamma_{ji}= \lambda_j/\lambda_i$. Since by assumption on the eigenvalues $\lambda_i < \lambda_j$,  $\gamma_{ji} >1$.  Thus the corresponding optimization in (\ref{eqrho2}) reduces to an optimization in two variables:

\begin{equation} \label{eqopti}
\min \bigg \{ \frac{(\lambda_i x_i^2+ \lambda_j x_j^2)^2}{\lambda^2_i x_i^2+ \lambda_j x_j^2}: x_i^2+x_j^2=1 \bigg \}.
\end{equation}

Using the equation $x_i^2+x_j^2=1$, we reduce the above to an optimization problem with a single-variable, shown below:

\begin{equation} \label{eqopti2h}
\min \bigg \{ \frac{\big (1+ (\gamma_{ji}-1) x_j^2 \big )^2}{ 1+ (\gamma^2_{ji} -1) x_j^2}: x_j^2 \leq 1 \bigg \}.
\end{equation}

When $x_j^2=1$, the objective value is $1$. We claim the minimum is attained when $x_j^2 <1$. To this end we differentiate the objective function  in (\ref{eqopti2h}) and set equal to zero to get
\begin{equation}
4x_j(\gamma_{ji}-1) \big (1+ (\gamma_{ji}-1) x_j^2 \big ) \big (1+ (\gamma^2_{ji} -1) x_j^2 \big )-
2x_j(\gamma^2_{ji}-1)\big (1+(\gamma_{ji}-1) x_j^2 \big )^2=0.
\end{equation}
Dividing by $2x_j (\gamma_{ji}-1) \big (1+ (\gamma_{ji}-1) x_j^2 \big )$ gives
\begin{equation}
2\big (1+ (\gamma^2_{ji} -1) x_j^2 \big )-
(\gamma_{ji}+1)\big (1+ (\gamma_{ji} -1) x_j^2 \big )=0.
\end{equation}
Equivalently,
\begin{equation}
2(\gamma^2_{ji} -1) x_j^2 -(\gamma_{ji}+1)(\gamma_{ji} -1) x_j^2=
(\gamma_{ji}+1) - 2.
\end{equation}
Simplifying gives
\begin{equation}
(\gamma^2_{ji} -1) x_j^2 =(\gamma_{ji}-1).
\end{equation}
It is now straightforward to show that the solution of the above equation gives the optimal solution of (\ref{eqopti2h}). Thus the optimal solutions of (\ref{eqopti}) satisfies:
\begin{equation} \label{eqopti2}
x^2_{j*}=\frac{1}{\gamma_{ji} +1}= \frac{\lambda_i}{\lambda_i + \lambda_j}
,  \quad x^2_{i*}=\frac{\gamma_{ji}}{\gamma_{ji} +1}=\frac{\lambda_j}{\lambda_i + \lambda_j}.
\end{equation}
Substituting (\ref{eqopti2}) into the objective function in (\ref{eqopti}) yields the following objective value, which happens to be less than one; hence, it is the optimal value.
\begin{equation} \label{eqlam4}
\frac{4 \gamma_{ji} }{(\gamma_{ji}+1)^2} < 1.
\end{equation}
Considering the function $\gamma/(\gamma+1)^2$, it decreases over the interval $[1, \infty)$. Hence, the minimum value (\ref{eqlam4}) occurs when $\gamma_{ji}= \gamma_{m1}=\kappa$. This proves that (\ref{lema1eq1xxz}) is an optimal solution, and due to the homogeneity of the objective function, any scalar multiple of it is also optimal. Thus, the proof of (i) is complete.

(ii):  Suppose the positive eigenvalues of $H$ are $\lambda_t, \dots, \lambda_m$. Let $\Lambda_+={\rm diag}(\lambda_t, \dots, \lambda_m)$. By assumption on $x$, we can write $x=\sum_{i=t}^m \beta_i u_i$ for some coefficients $\beta_t, \dots, \beta_m$. Let $y=(\beta_t, \dots, \beta_m)^T$. Note that $\Vert x \Vert = \Vert y \Vert$,  $x^THx=y^T\Lambda_+y$ and $x^TH^2x=y^T\Lambda^2_+y$.
Using these and applying the bound from part (i) to the positive definite matrix $\Lambda_+$, we complete the proof of (ii).

(iii): When $x$ is arbitrary, we still have $x^THx=y^T\Lambda_+y$ and $x^TH^2x=y^T\Lambda^2_+y$. However, $\Vert x \Vert \geq \Vert y \Vert$. But we have,
\begin{equation} \label{eqyy}
\Vert y \Vert ^2 = \sum_{i=t}^m \beta_i^2 \geq \frac{1}{\lambda_{\max}} \sum_{i=t}^m \lambda_i \beta_i^2 = \frac{x^THx}{\lambda_{\max}}.
\end{equation}
Using (\ref{eqyy}) and applying the bound on part (i) to the positive definite matrix $\Lambda_+$ we may write
\begin{equation}
\frac{(x^THx)^2}{x^TH^2x}  = \frac{(y^T\Lambda_+ y)^2}{ y^T\Lambda^2_+y} =
\frac{(y^T\Lambda_+y)^2}{ y^T \Lambda^2_+y}  \frac{1}{ \Vert y \Vert^2} \Vert y \Vert^2 \geq
\frac{4\kappa^+}{(\kappa^++1)^2} \frac{x^THx}{\lambda_{\max}}.
\end{equation}
Since $\kappa^+ \geq 1$, we have
\begin{equation} \label{eq31}
\frac{4\kappa^+} {(\kappa^++1)^2}   \geq \frac{1}{\kappa^+}.
\end{equation}
This completes the proof of (iii).
\end{proof}

\subsection{Relating  Magnitudes of Consecutive Residuals in First-Order CTA} \label{sec3.6}

\begin{lemma} \label{newlemz} Given $H$ an $m \times m$ symmetric PSD matrix, for any  $r \in \mathbb{R}^m$ with $Hr \not =0$, set
$F_1(r)=r- (r^THr/r^TH^2r)Hr$. Then
\begin{equation} \label{erf}
\Vert F_1(r) \Vert^2=  \Vert r \Vert^2- \frac{(r^TH r)^2}{r^TH^2r}.
\end{equation}
\end{lemma}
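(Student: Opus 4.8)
The plan is to compute $\Vert F_1(r)\Vert^2$ directly by expanding the square, using only the definition $F_1(r) = r - \alpha Hr$ with $\alpha = (r^THr)/(r^TH^2r)$, and exploiting the symmetry of $H$. Write $\Vert F_1(r)\Vert^2 = \Vert r - \alpha Hr\Vert^2 = r^Tr - 2\alpha\, r^THr + \alpha^2\, (Hr)^T(Hr)$. Since $H$ is symmetric, $(Hr)^T(Hr) = r^TH^2r$, so the last term is $\alpha^2 r^TH^2r$.

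Next I would substitute the value of $\alpha$. The middle term becomes $-2\,\frac{(r^THr)^2}{r^TH^2r}$, and the last term becomes $\frac{(r^THr)^2}{(r^TH^2r)^2}\cdot r^TH^2r = \frac{(r^THr)^2}{r^TH^2r}$. Adding these, $-2\,\frac{(r^THr)^2}{r^TH^2r} + \frac{(r^THr)^2}{r^TH^2r} = -\frac{(r^THr)^2}{r^TH^2r}$, which gives exactly $\Vert F_1(r)\Vert^2 = \Vert r\Vert^2 - \frac{(r^THr)^2}{r^TH^2r}$, as claimed.

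One should note why the expression is well-defined: the hypothesis $Hr\neq 0$ guarantees $r^TH^2r = \Vert Hr\Vert^2 > 0$, so $\alpha$ and the fraction $(r^THr)^2/(r^TH^2r)$ make sense. This is the only subtlety; there is no real obstacle here, since the statement is essentially the Pythagorean identity for the orthogonal projection of $r$ onto the line spanned by $Hr$ — indeed $\alpha Hr$ is precisely that projection (as already observed in Subsection~\ref{sec3.1}), and $F_1(r) = r - \alpha Hr$ is the component orthogonal to $Hr$, so $\Vert r\Vert^2 = \Vert \alpha Hr\Vert^2 + \Vert F_1(r)\Vert^2$ with $\Vert \alpha Hr\Vert^2 = (r^THr)^2/(r^TH^2r)$. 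I would present the direct algebraic expansion as the proof, perhaps remarking on the geometric interpretation for intuition.
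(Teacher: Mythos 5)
Your proof is correct and is essentially the paper's own argument carried out in full: the paper merely says ``It suffices to note $\Vert F_1(r)\Vert^2 = F_1(r)^T F_1(r)$,'' and your explicit expansion, substitution of $\alpha$, and the observation that $Hr\ne 0$ makes $r^TH^2r>0$ are exactly the steps being left implicit there.
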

\begin{proof} It suffices to note $\Vert F_1(r) \Vert^2 = F_1(r)^TF_1(r)$.
\end{proof}

\begin{theorem} \label{thm1} Let $F_1(r)$  be as defined previously in terms of a symmetric PSD matrix $H$. Let $\kappa^+$ be
the ratio of the largest to smallest positive eigenvalues of $H$.  Let  $c(H)={1}/{\kappa^+ \lambda_{\max}}$.

\noindent (i) Suppose $r \in \mathbb{R}^m$ is a linear combination of eigenvectors of $H$ corresponding to positive eigenvalues. Then,

\begin{equation} \label{coreq1}
\Vert F_1(r) \Vert \leq   \bigg (\frac{\kappa^+-1}{\kappa^++1} \bigg ) \Vert r \Vert.
\end{equation}

\noindent (ii)  Given arbitrary $r \in \mathbb{R}^m$,
if $Hr \not =0$,
\begin{equation} \label{coreq2}
\Vert F_1(r) \Vert^2  \leq   \Vert r \Vert^2   -  c(H) \times r^THr,
\end{equation}
\end{theorem}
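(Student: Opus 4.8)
The plan is to obtain both inequalities directly from the exact identity of Lemma~\ref{newlemz} combined with the two lower bounds on the quotient $(r^THr)^2/(r^TH^2r)$ supplied by Lemma~\ref{lem1}. Since $\Vert F_1(r)\Vert^2 = \Vert r\Vert^2 - (r^THr)^2/(r^TH^2r)$, everything reduces to bounding that quotient from below, and parts (ii) and (iii) of Lemma~\ref{lem1} do exactly this in the two regimes the theorem distinguishes.

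For part (i): if $r=0$ the claim is trivial, so assume $r\neq 0$. Since $r$ is a linear combination of eigenvectors of $H$ associated with positive eigenvalues and $r\neq 0$, we have $Hr\neq 0$, so Lemma~\ref{newlemz} is applicable. Applying Lemma~\ref{lem1}(ii) gives $(r^THr)^2/(r^TH^2r) \geq \tfrac{4\kappa^+}{(\kappa^++1)^2}\Vert r\Vert^2$, and substituting into the identity yields $\Vert F_1(r)\Vert^2 \leq \Vert r\Vert^2\bigl(1 - \tfrac{4\kappa^+}{(\kappa^++1)^2}\bigr)$. Using the algebraic simplification $(\kappa^++1)^2 - 4\kappa^+ = (\kappa^+-1)^2$ and taking square roots gives (\ref{coreq1}).

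For part (ii): here $Hr\neq 0$ by hypothesis. Because $H$ is PSD, $r^THr = \Vert H^{1/2}r\Vert^2$, and $r^THr=0$ would force $H^{1/2}r=0$, hence $Hr=H^{1/2}(H^{1/2}r)=0$; therefore $r^THr>0$, and Lemma~\ref{lem1}(iii) applies, giving $(r^THr)^2/(r^TH^2r) \geq c(H)\, r^THr$. Plugging this into the identity of Lemma~\ref{newlemz} yields (\ref{coreq2}) at once.

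There is essentially no obstacle here beyond bookkeeping: the substantive work — the variational argument that pins down the worst-case two-dimensional configuration of eigencomponents — is already packaged inside Lemma~\ref{lem1}. The only points needing care are the degenerate cases (namely $r=0$ in part (i), and distinguishing $r^THr=0$ from $Hr\neq 0$ in part (ii)) and the identity $(\kappa^++1)^2-4\kappa^+=(\kappa^+-1)^2$ used to read off the contraction factor.
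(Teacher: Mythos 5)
Your proposal is correct and follows essentially the same route as the paper: combine the exact identity of Lemma~\ref{newlemz} with Lemma~\ref{lem1}(ii) for part (i) and Lemma~\ref{lem1}(iii) for part (ii). The only difference is that you explicitly verify the step $Hr\neq 0\Rightarrow r^THr>0$ (needed because Lemma~\ref{lem1}(iii) is stated under the hypothesis $x^THx>0$), which the paper leaves implicit with the word ``trivially''; this is a welcome bit of extra care but not a different argument.
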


\begin{proof}

 (i): If $r=0$, $(\ref{coreq1})$ holds. Otherwise, by assumption on $r$, $Hr \not =0$.  From Lemma \ref{newlemz}, (\ref{erf}), followed by the bound (\ref{lema1eq2}) in Lemma \ref{lem1} we get
\begin{equation}
\Vert F_1(r) \Vert^2 \leq   \Vert r \Vert^2 \bigg (1 - \frac{4 \kappa^+}{(\kappa^++1)^2} \bigg)= \Vert r \Vert^2 \bigg (\frac{ \kappa^+-1}{\kappa^++1} \bigg )^2.
\end{equation}
Hence  (\ref{coreq1}) holds.

(ii): Proof of (\ref{coreq2})
follows trivially from (\ref{erf}) and the bound (\ref{LEM1eq2}) in Lemma \ref{lem1}.
\end{proof}

\begin{example} \label{example3}
We consider a worst-case example for $F_1(r)$. Let $A={\rm diag}(1,2, \dots, m)$, $b=(1, \dots, 1)^T$.  Let
$r=({\sqrt{m}}/{\sqrt{m+1}}, 0, \cdots,0, 1/{\sqrt{m+1}})^T$.
Then $r^Tr=1$, $r^TAr=\frac{2m}{(m+1)}$, $r^TA^2r=m$. Substituting these, and using $\kappa^+=m$, we get $\Vert F_1(r) \Vert= (\kappa^+-1)/(\kappa^++1)$ so that the worst-case bound it attained.
\end{example}

\subsection{Convergence Properties of First-Order CTA} \label{sec3.7}

We characterize the convergence properties of iterations of $F_1$. Firstly, we establish a simple yet crucial property concerning PSD linear systems. This result, in conjunction with the auxiliary lemma on symmetric PSD matrices (Lemma \ref{lem1}), will be employed to derive complexity bounds for first-order CTA and its generalizations. These bounds will be expressed in terms of $\kappa^+$ rather than $\kappa$. In other words, when considering the use of CTA, the crucial factor is the solvability of the system $Ax=b$, rather than the invertibility of $A$ or $H=AA^T$, which implies solvability.

\begin{proposition} \label{syst}  Consider $Hx=b$, where $H$ is an $m \times m$ symmetric PSD matrix, $b \not=0$.

(1) The system $Hx=b$ is solvable if and only if $b$ is a linear combination of eigenvectors of $H$ corresponding to positive eigenvalues.

(2) The system $Hx=b$ is solvable if and only if, for any $x_0 \in \mathbb{R}^m$, $r_0=b-Hx_0$ is a linear combination of eigenvectors of $H$ corresponding to positive eigenvalues.

(3) The system $Ax=b$  is solvable if and only if $b$ is a linear combination of eigenvectors of $AA^T$ corresponding to positive eigenvalues.

(4) The system $Ax=b$ is solvable if and only if, for any $w_0$, $r_0=b-AA^Tw_0$ is a linear combination of eigenvectors of $H$ corresponding to positive eigenvalues.
\end{proposition}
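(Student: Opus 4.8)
The plan is to prove all four statements by reducing them to a single spectral fact about symmetric PSD matrices and then chaining the reductions. First I would prove statement (1), which is the heart of the matter. Diagonalize $H = U\Lambda U^T$ with orthonormal eigenvectors $u_1,\dots,u_m$ and eigenvalues $\lambda_1,\dots,\lambda_m\ge 0$. Write $b=\sum_i \beta_i u_i$. If $Hx=b$ has a solution $x=\sum_i \xi_i u_i$, then comparing coefficients gives $\lambda_i \xi_i=\beta_i$, so $\beta_i=0$ whenever $\lambda_i=0$; hence $b$ lies in the span of eigenvectors with positive eigenvalues. Conversely, if $\beta_i=0$ for all $i$ with $\lambda_i=0$, set $\xi_i=\beta_i/\lambda_i$ for $\lambda_i>0$ and $\xi_i=0$ otherwise; then $x=\sum_i \xi_i u_i$ satisfies $Hx=b$. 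Equivalently and more cleanly, the range of $H$ equals the orthogonal complement of its kernel (since $H$ is symmetric), and $\ker H$ is exactly the span of the zero-eigenvalue eigenvectors, so $\mathrm{range}(H)$ is the span of the positive-eigenvalue eigenvectors.

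Next I would derive statement (2) from (1). Fix any $x_0$ and let $r_0=b-Hx_0$. If $Hx=b$ is solvable, then $r_0 = H(x-x_0) \in \mathrm{range}(H)$, which by (1) is the span of positive-eigenvalue eigenvectors. Conversely, if $r_0$ is such a linear combination, then $r_0\in\mathrm{range}(H)$, so there is $z$ with $Hz=r_0$; then $x=x_0+z$ solves $Hx=b$. (Note $b=r_0+Hx_0$ is then automatically in $\mathrm{range}(H)$ as well, consistent with (1).)

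For statements (3) and (4), I would invoke Proposition \ref{minnorm}, which gives that $Ax=b$ is solvable if and only if $AA^Tw=b$ is solvable. Applying statement (1) to the symmetric PSD matrix $H=AA^T$ immediately yields (3): $AA^Tw=b$ is solvable iff $b$ is a linear combination of eigenvectors of $AA^T$ corresponding to positive eigenvalues. Likewise, applying statement (2) to $H=AA^T$ — with $x_0$ there playing the role of $w_0$ here — yields (4): $AA^Tw=b$ is solvable iff $r_0=b-AA^Tw_0$ is in the span of positive-eigenvalue eigenvectors of $AA^T$, and by Proposition \ref{minnorm} this is equivalent to solvability of $Ax=b$.

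The only genuine content is the range-kernel characterization in step (1); everything else is a formal transfer. The mild obstacle to watch for is keeping the quantifiers straight in (2) and (4): the claim is "for any $x_0$ (resp. $w_0$)", so I must show solvability forces the eigenvector condition for \emph{every} choice of shift (immediate, since $r_0$ is always in $\mathrm{range}(H)$ when the system is solvable), while for the converse a single $r_0$ satisfying the condition suffices to reconstruct a solution. There are no delicate estimates here, so no real analytic difficulty — the proof is short once (1) is in place.
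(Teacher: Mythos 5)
Your proposal is correct and follows essentially the same line as the paper: diagonalize $H$, compare coefficients to establish (1), reduce (2) to (1) via $r_0 = H(x-x_0)$, and transfer (3), (4) through the equivalence of solvability of $Ax=b$ and $AA^Tw=b$ from Proposition~\ref{minnorm}. Your optional reformulation of (1) as $\mathrm{range}(H) = (\ker H)^\perp = \mathrm{span}$ of positive-eigenvalue eigenvectors is a cleaner packaging of the same fact, but does not constitute a genuinely different route.
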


\begin{proof} Let $H=U \Lambda U^T$ be the spectral decomposition of $H$, where $U=[u_1, \cdots, u_m]$ is the matrix of orthonormal eigenvectors, and $\Lambda={\rm diag}(\lambda_1, \cdots, \lambda_m)$ the diagonal matrix of eigenvalues. Without loss of generality, assume $\lambda_i=0$ for $i=1, \dots, t-1$, and $\lambda_i >0$ for $i \geq t$.

Multiplying the equation $Hx=b$ by $U^T$ on both sides, we get $\Lambda U^Tx=U^Tb$. Let $y=U^Tx$. We can write $b=\sum_{i=1}^m \beta_i u_i$.
Then, $U^Tb=(\beta_1, \cdots, \beta_m)^T$.

(1): Suppose $Hx=b$ is solvable. Then $\lambda_iy_i=\beta_i$ for $i=1, \dots, m$. This implies $\beta_i=0$ for $i=1, \dots, t-1$. Consequently, this implies $b$ is a linear combination of eigenvectors with positive eigenvalues.

Conversely, suppose $b$ is such a linear combination. Then in $\lambda_i y_i=\beta_i$, we set $y_i=0$ for $i=1, \dots, t-1$. For $i \geq t$, we set $y_i=\beta_i/\lambda_i$. This choice results in a solution to the system $\Lambda y=U^Tb$.  Then, by computing $x=U^Ty$, we obtain is a solution to $Hx=b$.

(2): Given that $r_0=b-Hx_0$, it follows that $Hx=b$ is solvable if and only if $H(x-x_0)=r_0$ is solvable. Thus, we can apply the first part of the theorem to the latter system.

(3) and (4): The proofs for (3) and (4) follow directly because $Ax=b$ is solvable if and only if $AA^Tw=b$ is solvable, and these equations can be analyzed similarly to (1) and (2) by applying the results from part (1) of the theorem.
\end{proof}

\begin{theorem}  \label{thm2} {\rm (Properties of Residual Orbit)} Consider solving $Ax=b$ or $A^TAx=A^Tb$, where $A$ is an $m \times n$ matrix.
Let $H=AA^T$. If $A$ is symmetric PSD, we can take $H=A$.
Let $\kappa^+$ be the ratio of the largest to smallest positive eigenvalues of $H$.  Let  $c(H)={1}/{\kappa^+ \lambda_{\max}}$. Given $r \in \mathbb{R}^m$ with $Hr \not =0$, let

\begin{equation} \label{thmeq200}
F_1(r) = r - \alpha_{1,1}(r)Hr, \quad  \alpha_{1,1}(r)=\frac{r^THr}{r^TH^2r}.
\end{equation}

Given $w_0 \in \mathbb{R}^m$, set $x_0 =A^Tw_0$, $r_0 =b-Ax_0\in \mathbb{R}^m$. For any $k \geq 1$, let
\begin{equation} \label{thmeq3}
r_{k}=F_1(r_{k-1})= F^{\circ k}_1(r_0)= \overbrace{F_1 \circ F_1 \circ \cdots \circ F_1}^{k ~ \text{times}}(r_0),
\end{equation}
the $k$-fold composition of $F_1$, where we assume for $j=0, \dots, k-1$, $Hr_j \not =0$.  Also for any $k \geq 1$ define,

\begin{equation} \label{thmeq4s1}
x_{k}= x_{k-1} +\begin{cases}
\alpha_{1,1}(r_{k-1}) r_{k-1}, & \text{if ~} H=A\\
\alpha_{1,1}(r_{k-1}) A^Tr_{k-1}, & \text{if~} H=AA^T.
\end{cases}
\end{equation}

\noindent (I) For any $k \geq 0$ we have
\begin{equation}  \label{rescorect}
r_{k}=b-Ax_{k}.
\end{equation}

\noindent (II) Suppose $Ax=b$ is solvable. Then

\begin{equation} \label{eqrate2}
\Vert r_k \Vert \leq \bigg (\frac{\kappa^+-1}{\kappa^++1} \bigg)^{k}
\Vert r_0 \Vert, \quad \forall k \geq 1.
\end{equation}

\noindent (III) Moreover, given $\varepsilon \in (0,1)$, for some $k$ iterations of $F_1$, satisfying
\begin{equation}  \label{epbond}
k \leq \kappa^+ \ln \bigg (\frac{\Vert r_0 \Vert}{\varepsilon} \bigg ),
\end{equation}
\begin{equation}  \label{bdonrk}
\Vert r_k \Vert \leq \varepsilon.
\end{equation}

\noindent (IV) Regardless of the solvability of $Ax=b$,
\begin{equation} \label{eqbounderrr77}
\Vert r_k \Vert^2 \leq \Vert r_0 \Vert^2 - c(H) \times  \sum_{j=0}^{k-1}
r_{j}^T H r_{j}.
\end{equation}

\noindent (V)
Moreover, given $\varepsilon \in (0,1)$, for some $k$ iterations of $F_1$, satisfying
\begin{equation} \label{iterbdsing}
k \leq \frac{\kappa^+}{\varepsilon} \lambda_{\max} \Vert r_0 \Vert^2,
\end{equation}
\begin{equation} \label{iterbdsingtone}
r_k^THr_k  \leq \varepsilon.
\end{equation}

\noindent (VI) The corresponding approximate solutions and error bounds are:

\begin{itemize}

\item
If $\Vert r_k \Vert \leq \varepsilon$, then $x_k$, defined in (\ref{thmeq4s1}), satisfies
\begin{equation}  \label{thm4case1}
\Vert A x_k - b \Vert \leq \varepsilon.
\end{equation}

\item
If $\Vert r_k \Vert > \varepsilon$ but $r_k^THr_k \leq \varepsilon$, then $\Vert A^Tr_k \Vert =O(\sqrt{\varepsilon})$. More precisely,
\begin{equation} \label{thmeq4gggnew}
\Vert A^TA x_{k} - A^Tb \Vert \leq \begin{cases}
\sqrt{\varepsilon} \Vert A \Vert^{1/2} , & \text{if ~} H=A\\
\sqrt{\varepsilon}, & \text{if~} H=AA^T.
\end{cases}
\end{equation}
\end{itemize}
\end{theorem}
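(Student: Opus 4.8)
The plan is to prove the six parts in the order stated: part (I) fixes what $x_k$ means, parts (II)--(III) dispatch the consistent case via Theorem~\ref{thm1}(i), and parts (IV)--(VI) the general case via Theorem~\ref{thm1}(ii), with the quantitative parts (III) and (V) being short deductions from (II) and (IV). For (I), I would induct on $k$, the base case being the definition $r_0=b-Ax_0$. For the step, assume $r_{k-1}=b-Ax_{k-1}$; the update (\ref{thmeq4s1}) is arranged so that $A(x_k-x_{k-1})=\alpha_{1,1}(r_{k-1})\,Hr_{k-1}$ in both conventions (if $H=A$, $A\big(\alpha_{1,1}(r_{k-1})r_{k-1}\big)=\alpha_{1,1}(r_{k-1})Hr_{k-1}$; if $H=AA^T$, $A\big(\alpha_{1,1}(r_{k-1})A^Tr_{k-1}\big)=\alpha_{1,1}(r_{k-1})Hr_{k-1}$), whence $b-Ax_k=r_{k-1}-\alpha_{1,1}(r_{k-1})Hr_{k-1}=F_1(r_{k-1})=r_k$.

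For (II)--(III), the structural fact to isolate is that $V=\mathrm{range}(H)$ --- which for symmetric PSD $H$ is exactly the span of the eigenvectors with positive eigenvalues --- is invariant under $F_1$: since $F_1(r)=r-\alpha_{1,1}(r)Hr$ and $Hr\in V$ for every $r$, $r\in V\Rightarrow F_1(r)\in V$. When $Ax=b$ is consistent we have $r_0\in V$: write $r_0=b-H(Aw_0)$ and apply Proposition~\ref{syst}(2) when $H=A$, or $r_0=b-Hw_0$ and apply Proposition~\ref{syst}(4) when $H=AA^T$. Induction then gives $r_j\in V$ for all $j$, so Theorem~\ref{thm1}(i) applies at each step to give $\Vert r_j\Vert\le\tfrac{\kappa^+-1}{\kappa^++1}\Vert r_{j-1}\Vert$; iterating over $j=1,\dots,k$ yields (\ref{eqrate2}). (The hypothesis $Hr_j\ne0$ for $j<k$ costs nothing: $H$ is positive definite on $V$, so $r_j\in V$ and $Hr_j=0$ force $r_j=0$, i.e.\ convergence already happened.) For (III), (\ref{eqrate2}) gives $\Vert r_k\Vert\le\varepsilon$ as soon as $k\,\ln\!\big(\tfrac{\kappa^++1}{\kappa^+-1}\big)\ge\ln(\Vert r_0\Vert/\varepsilon)$, and the elementary estimate $\ln\!\big(1+\tfrac{2}{\kappa^+-1}\big)\ge\tfrac{2}{\kappa^+}\ge\tfrac{1}{\kappa^+}$ shows $k\le\kappa^+\ln(\Vert r_0\Vert/\varepsilon)$ suffices.

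Part (IV) is the telescoping of $\Vert r_j\Vert^2\le\Vert r_{j-1}\Vert^2-c(H)\,r_{j-1}^THr_{j-1}$ (Theorem~\ref{thm1}(ii)) over $j=1,\dots,k$. For (V), $\Vert r_k\Vert^2\ge0$ with (\ref{eqbounderrr77}) gives $\sum_{j=0}^{k-1}r_j^THr_j\le\Vert r_0\Vert^2/c(H)=\kappa^+\lambda_{\max}\Vert r_0\Vert^2$, so the smallest of $r_0^THr_0,\dots,r_{k-1}^THr_{k-1}$ is at most $\kappa^+\lambda_{\max}\Vert r_0\Vert^2/k\le\varepsilon$ once $k\ge\kappa^+\lambda_{\max}\Vert r_0\Vert^2/\varepsilon$ (if the run stopped earlier with $Hr_j=0$ then $r_j^THr_j=0$ already). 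Part (VI) combines (I) with $r_k^THr_k=\Vert A^Tr_k\Vert^2$ when $H=AA^T$, and with $r_k^THr_k=\Vert A^{1/2}r_k\Vert^2$ plus $\Vert Ar_k\Vert\le\Vert A^{1/2}\Vert\,\Vert A^{1/2}r_k\Vert=\Vert A\Vert^{1/2}\Vert A^{1/2}r_k\Vert$ when $H=A$; since by (I) $A^Tr_k=A^Tb-A^TAx_k$ and, in the symmetric case, $Ar_k=Ab-A^2x_k=A^Tb-A^TAx_k$, substituting $\sqrt{r_k^THr_k}\le\sqrt{\varepsilon}$ gives (\ref{thm4case1}) and (\ref{thmeq4gggnew}).

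The one place I expect friction is the bookkeeping in (II): identifying $\mathrm{range}(H)$ with the positive-eigenvalue eigenspace, checking its $F_1$-invariance, verifying $r_0\in\mathrm{range}(H)$ under the two slightly different initializations, and confirming the hypothesis $Hr_j\ne0$ for $j<k$ is genuinely available so that Theorem~\ref{thm1}(i) is applicable at each step. Everything else is telescoping sums plus the two elementary spectral identities for $r^THr$.
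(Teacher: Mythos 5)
Your proposal is correct and follows essentially the same route the paper takes: induction for~(I), the one-step contraction of Theorem~\ref{thm1}(i) iterated for~(II)--(III), the telescoped version of Theorem~\ref{thm1}(ii) for~(IV)--(V), and the identities $r^THr=\Vert A^Tr\Vert^2$ (if $H=AA^T$) or $r^THr=\Vert A^{1/2}r\Vert^2$ (if $H=A$) together with~(I) for~(VI). The one place where you genuinely add something is part~(II): the paper applies Proposition~\ref{syst} directly to $r_{k-1}$ without explaining why membership in the positive eigenspace of $H$ propagates along the orbit; your observation that this subspace equals $\mathrm{range}(H)$ and is visibly $F_1$-invariant (since $F_1(r)-r\in\mathrm{range}(H)$), together with the check that $r_0$ lands there under both initializations, fills that gap cleanly and also disposes of the standing hypothesis $Hr_j\neq0$. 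Your arithmetic in~(III) deviates slightly from the paper's $1+\alpha\le e^{\alpha}$ device; the intermediate claim $\ln\bigl(\tfrac{\kappa^++1}{\kappa^+-1}\bigr)\ge 2/\kappa^+$ is true but not obvious, and the simpler chain $\ln\bigl(\tfrac{\kappa^++1}{\kappa^+-1}\bigr)\ge\tfrac{2}{\kappa^++1}\ge\tfrac{1}{\kappa^+}$ reaches the same bound with no work.
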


\begin{proof}

\noindent (I): We prove (\ref{rescorect}) by induction on $k$.  By definition it is true for $k=0$. Assume true for $k-1$. Multiplying (\ref{thmeq4s1}) by $A$, using the definition of $H$,  $F_1(r)$ and $\alpha_{1,1}(r)$, we get
$$Ax_k=Ax_{k-1} +r_{k-1}- F_1(r_{k-1}).$$
Substituting for $r_{k-1}= b - Ax_{k-1}$, and for $F_1(r_{k-1})=r_k$, we get  $Ax_k=b - r_k$. Hence the statement is true for $k$.

\noindent (II): Since $Ax=b$ is solvable, $AA^Tw=b$ is solvable. Since $x_0=A^Tw_0$, $r_0=b-AA^Tw_0$. From Proposition \ref{syst}, $r_{k-1}$ is a linear combination of eigenvectors of $H$ corresponding to positive eigenvalues and since we assumed $Hr_{k-1} \not =0$, from Theorem \ref{thm1}, (\ref{coreq1}) we have
\begin{equation}
\Vert r_k \Vert \leq \bigg (\frac{\kappa^+ -1}{\kappa^++1} \bigg )\Vert r_{k-1} \Vert.
\end{equation}
The above inequality can be written for $k-1, k-2, \dots, 1$. Hence the  proof of (\ref{eqrate2}).

\noindent (III):
To get a bound on the number of iteration in (\ref{epbond}), we use the well-known inequality $ 1+ \alpha \leq \exp(\alpha)$. Using this,
$$ \frac{\kappa^+ -1}{\kappa^+ +1} = 1 - \frac{2}{\kappa^++1} \leq \exp \bigg ( \frac{-2}{ \kappa^++1} \bigg).$$
To get $\Vert r_k \Vert \leq \varepsilon$ it suffices to have
$$\exp \bigg (\frac{-2k}{\kappa^++1} \bigg ) \Vert r_0 \Vert \leq \varepsilon.$$
This implies the bound in (\ref{bdonrk}).

\noindent (IV):
From the assumption $Hr_{k-1} \not =0$, and Theorem \ref{thm1}, (\ref{coreq1}),
we have
\begin{equation} \label{eqbounderrr}
\Vert r_k \Vert^2 \leq \Vert r_{k-1} \Vert^2 - c(H) \times r_{k-1}^T H r_{k-1}.
\end{equation}
From repeated applications of (\ref{eqbounderrr}) for $k-1, \dots, 1$, we get (\ref{eqbounderrr77}).

\noindent (V):  From (\ref{eqbounderrr77}), it follows that if for $j =0, \dots, k-1$,
$r_{j}^T H r_{j} > \varepsilon$, then
\begin{equation} \label{eqbounder2}
 \Vert r_k \Vert^2
\leq  \Vert r_0 \Vert^2 - k \times c(H) \varepsilon.
\end{equation}
Since $\Vert r_k \Vert \geq 0$, from (\ref{eqbounder2}) we get the bound (\ref{iterbdsing}).

\noindent (VI):
We have shown
$x_k$, defined recursively in (\ref{thmeq4s1}),
satisfies $r_k=b-Ax_k$. This proves (\ref{thm4case1}).

Suppose $\Vert r_k \Vert > \varepsilon$ but $r_k^THr_k \leq \varepsilon$. If $H=AA^T$, then  $\Vert A^T r_k \Vert \leq \sqrt{\varepsilon}$ and hence  (\ref{thmeq4gggnew}) is satisfied for $H=AA^T$.  If $H=A$,  $r_k^THr_k= \Vert H^{1/2} r_k \Vert^2 \leq \varepsilon$. Thus
\begin{equation}
\Vert H r_k \Vert = \Vert H^{1/2} H^{1/2} r_k \Vert \leq
 \Vert H^{1/2} \Vert \times \Vert H^{1/2} r_k \Vert  \leq \Vert H \Vert ^{1/2} \sqrt{\varepsilon}.
\end{equation}
Hence (\ref{thmeq4gggnew}) is satisfied for $H=A$ as well.
This completes the proof of the theorem.
\end{proof}

\begin{corollary} \label{corsmall} Let $r_k=b-Ax_k$ and $x_k$ be defined as in Theorem \ref{thm2}.

(1) Suppose ${\rm rank}(A)=m$. Then, $x_k$ converges to a solution of $Ax=b$.  If $H=AA^T$, $x_k$ converges to the minimum-norm solution of $Ax=b$.

(2) $A^Tr_k$ converges to zero.

(3) Suppose $Ax=b$ is solvable.  If $\{x_k\}$ is bounded, then $r_k$ converges to zero. Moreover, any accumulation point of $x_k$'s is a solution of $Ax=b$.

(4) Suppose $Ax=b$ is solvable, $H=AA^T$, and $x_0=A^Tw_0$ for some $w_0 \in \mathbb{R}^m$.  Then, for all $k \geq 1$, $x_k=A^Tw_k$ for some $w_k \in \mathbb{R}^m$. If $\{w_k\}$ is a bounded, $x_k$ converges to the minimum-norm solution $x_*$.
\end{corollary}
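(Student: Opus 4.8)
The plan is to read all four items off the a~priori estimates of Theorem~\ref{thm2}, supplemented by Proposition~\ref{minnorm} and two elementary facts: a nonnegative series with bounded partial sums has terms tending to $0$, and a bounded sequence in $\mathbb{R}^n$ with a unique accumulation point converges to it. Throughout, the standing hypotheses of Theorem~\ref{thm2} are in force (in particular $x_0=A^Tw_0$); the running assumption there that $Hr_j\neq0$ is harmless, since if $Ax=b$ is solvable and $Hr_j=AA^Tr_j=0$ then $A^Tr_j=0$, hence $A^TAx_j=A^Tb$, hence $Ax_j=b$ and $r_j=0$, so the iteration has already hit an exact solution and all stated limits hold trivially from that index on. For item (1): ${\rm rank}(A)=m$ makes $Ax=b$ consistent and makes $H$ positive definite, whether $H=A$ (then $A$ itself is invertible) or $H=AA^T$; hence $\kappa^+=\kappa<\infty$ and $(\kappa^+-1)/(\kappa^++1)<1$, so Theorem~\ref{thm2}(II) gives $r_k\to0$. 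If $H=A$, then $x_k=A^{-1}(b-r_k)\to A^{-1}b$, the unique solution. If $H=AA^T$, then (by the induction in item (4)) $x_k=A^Tw_k$ with $AA^Tw_k=b-r_k\to b$, so $w_k\to(AA^T)^{-1}b$ and $x_k\to A^T(AA^T)^{-1}b$, which by Proposition~\ref{minnorm} (taking $w=(AA^T)^{-1}b$) is the minimum-norm solution.

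For item (2): inequality (\ref{eqbounderrr77}) of Theorem~\ref{thm2}(IV) gives $c(H)\sum_{j=0}^{k-1}r_j^THr_j\leq\Vert r_0\Vert^2$ for every $k$, and since $c(H)>0$ and each term is nonnegative, $r_j^THr_j\to0$. When $H=AA^T$ this says $\Vert A^Tr_j\Vert^2\to0$. When $H=A$ it says $\Vert A^{1/2}r_j\Vert^2\to0$, whence $\Vert Ar_j\Vert=\Vert A^{1/2}(A^{1/2}r_j)\Vert\leq\Vert A\Vert^{1/2}\Vert A^{1/2}r_j\Vert\to0$, and $A=A^T$ in this case, so again $A^Tr_k\to0$.

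For items (3) and (4): solvability already forces $r_k\to0$ by Theorem~\ref{thm2}(II); boundedness only supplies accumulation points, and the rigidity of $r_k\to0$ pins them down. In (3), if $\{x_k\}$ is bounded, pick an accumulation point $\bar x=\lim_j x_{k_j}$; then $A\bar x=\lim_j(b-r_{k_j})=b$, so $\bar x$ solves $Ax=b$. In (4), the identity $x_k=A^Tw_k$ with $w_k=w_{k-1}+\alpha_{1,1}(r_{k-1})r_{k-1}$ is an immediate induction from (\ref{thmeq4s1}); boundedness of $\{w_k\}$ forces boundedness of $\{x_k\}=\{A^Tw_k\}$, and for any accumulation point $\bar w=\lim_j w_{k_j}$ one gets $AA^T\bar w=\lim_j(b-r_{k_j})=b$, so $\bar w$ solves $AA^Tw=b$ and $\bar x=A^T\bar w=x_*$ by Proposition~\ref{minnorm}; since every accumulation point of the bounded sequence $\{x_k\}$ equals $x_*$, the whole sequence converges to $x_*$.

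I do not expect a genuinely hard step here; the work is bookkeeping. The points needing care are: dispatching the degenerate case $Hr_j=0$ so that Theorem~\ref{thm2}(II) and (IV) apply unconditionally; keeping the ``minimum-norm'' conclusion confined to the $H=AA^T$ branch in (1) and (4), since in the $H=A$ branch one only obtains ``a solution''; and correctly invoking the bounded-sequence/unique-accumulation-point principle to turn the subsequential limits in (3)--(4) into true limits.
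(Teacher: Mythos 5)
Your proof is correct and, for parts (1), (2), and (4), essentially mirrors the paper's argument (including the induction giving $x_k = A^T w_k$, the convergent-series argument for $A^T r_k \to 0$, and the subsequence passage to the limit via Proposition~\ref{minnorm}).

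The one place you diverge is part (3). You dispatch $r_k\to 0$ by invoking Theorem~\ref{thm2}(II) directly, which requires the hypothesis $x_0=A^Tw_0$ (that is what lets Proposition~\ref{syst} place every $r_k$ in the span of eigenvectors with positive eigenvalues, which is what the geometric decay rests on). The paper instead does \emph{not} lean on that hypothesis in (3): it deduces only that $\{r_k\}$ is bounded from (\ref{eqbounderrr77}), picks an accumulation point $\bar r$, passes to a further subsequence where the bounded $\{x_k\}$ also converges to some $\bar x$, applies part (2) to get $A^TA\bar x=A^Tb$, and uses solvability of $Ax=b$ to upgrade this to $A\bar x=b$, whence $\bar r=0$; since $\bar r$ was an arbitrary accumulation point, $r_k\to 0$. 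This route is strictly weaker in its hypotheses and explains why the boundedness of $\{x_k\}$ appears as an explicit condition in (3): under your reading that $x_0=A^Tw_0$ is always in force (which the preamble of Theorem~\ref{thm2} does support, even though part (4) of the corollary restates it), the boundedness hypothesis becomes superfluous for the conclusion $r_k\to0$. Your argument is not wrong, but it silently makes a hypothesis of the statement redundant, whereas the paper's subsequence argument is the one that actually uses it and therefore covers the case of an arbitrary starting $x_0$. Everything else — the degenerate case $Hr_j=0$, the bounded-sequence/unique-accumulation-point step in (4), the appeal to Proposition~\ref{minnorm} — matches the paper.
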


\begin{proof}

(1): Suppose $H=A$. Then $x_k= A^{-1}b -A^{-1} r_k$. Since the bound on  $r_k$ given in (\ref{eqrate2}) implies that $r_k$ converges to zero, $x_k$ converges to $A^{-1}b$.  Suppose $H=AA^T$. Since $AA^T$ is invertible, $AA^Tw=b$ is solvable. Starting from $x_0=A^Tw_0$, we can use the formulas for $x_k$ (see (\ref{thmeq4s1})) and induction on $k$ to conclude that for all $k \geq 1$, $x_k=A^Tw_k$ for some $w_k$.
Now consider $r_k=b-Ax_k = b - AA^Tw_k$. Since $r_k$ converges to zero and $AA^T$ is invertible, it follows that $w_k$ converges to some $\overline w$, and hence,  $x_k$ converges to some $\overline x=A^T \overline w$. Given that $A \overline x=A A^T\overline w=b$, by Proposition \ref{minnorm}, we can conclude that $\overline x$ is the minimum-norm solution to $Ax=b$.

(2): If $r_k^T H r_k=0$ for some $k$, then in either case of $H$, $A^Tr_k=0$.  Suppose $r_k^T H r_k >0$ for all $k$. Then from (\ref{eqbounderrr77}), it follows that
\begin{equation}
C(H) \sum_{k=0}^\infty r_k^T H r_k \leq \Vert r_0 \Vert^2.
\end{equation}
Since the infinite series above has positive terms and stays bounded, it must be the case that $r_k^T H r_k$ converges to zero.  This implies that in either case of $H$, $A^Tr_k$ converges to zero.

(3): From (\ref{eqbounderrr77}), it follows that $r_k$ is a bounded sequence. Consider any accumulation point $\overline{r}$.  Let $S_1$ be the set of indices of  corresponding  subsequence of $r_k$'s converging to $\overline{r}$. Since $x_k$'s are bounded, there exists a subset $S_2 \subseteq S_1$ such that the corresponding $x_k$'s converge to $\overline{x}$. By part (1), as $k$ ranges in $S_2$,
$A^Tb - A^TAx_k$ converges to $A^Tb - A^TA\overline{x} = 0$. Since $Ax=b$ is solvable, it implies that $\overline{r} = 0$. Since $\overline{r}$ is any accumulation point of $r_k$'s, we conclude that $\{r_k \}$ itself converges to zero.

(4): By induction, starting from $x_0 = A^Tw_0$ and using the definition of $x_k$ in (\ref{thmeq4s1}), it follows that $x_k = A^Tw_k$ for some $w_k$. If  $\{w_k\}$ is bounded, then$\{x_k\}$ is also bounded. From part (2), any accumulation point of $x_k$'s, denoted as $\overline{x}$, is a solution to $Ax=b$. Since $\overline{x}$ must equal $A^T\overline{w}$, for some accumulation point, $\overline{w}$, of $\{w_k\}$, we have $AA^T\overline{w} = b$. Hence, $\overline{x} = x_*$.
\end{proof}

\section{High-Order Centering Triangle Algorithm (CTA)} \label{sec4}

In this section, we introduce a family of iterative methods designed for solving the linear system $Ax=b$ when a solution exists. These methods are extended to address the normal equation $A^TAx=A^Tb$ when the linear system is unsolvable. This family comprises generalized versions of $F_1(r)$, and its development not only enables the utilization of individual members as iteration functions but also facilitates the application of the entire family in a sequential manner.
This can be achieved by evaluating the family members using a single arbitrary input $r_0=b-Ax_0$. We shall refer to the family as high-order {\it Centering Triangle Algorithm} (CTA).

Before formally defining the high-order family, we provide a definition and state and prove a proposition that will be used to establish properties of the family.

\begin{definition} \label{defcharact} Let $H$ be an $m \times m$ symmetric positive semidefinite matrix (PSD).  Given $r \in \mathbb{R}^m$, its {\it minimal polynomial with respect to $H$}, denoted by $\mu_{\!_{H,r}}(\lambda)$,  is the monic polynomial of least degree $s$:
\begin{equation} \label{Hrmu}
\mu_{\!_{H,r}}(\lambda)=a_0+a_1 \lambda+ \dots +a_{s-1} \lambda^{s-1} + \lambda^s,
\end{equation}
such that $\mu_{\!_{H,r}}(Hr)=a_0 r+a_1Hr+ \dots + a_{d-1} H^{s-1}r+ H^sr=0$.
\end{definition}

\begin{proposition} \label{prop1} The degree of $\mu_{\!_{H,r}}(\lambda)$ is $s$ if and only if $r$ can be represented as
$r=\sum_{i=1}^s x_i u_i$, where $x_i$'s are nonzero and $u_i$'s are eigenvectors corresponding to $s$ distinct eigenvalues of $H$.
\end{proposition}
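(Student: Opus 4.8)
The plan is to push everything through the spectral decomposition of $H$. Since $H$ is symmetric PSD I will write $H=\sum_{j=1}^p \nu_j P_j$, where $\nu_1,\dots,\nu_p$ are the \emph{distinct} eigenvalues of $H$ and $P_j$ is the orthogonal projection onto the $\nu_j$-eigenspace, so that $P_iP_j=0$ for $i\neq j$, $\sum_j P_j=I$, and the vectors $P_1r,\dots,P_pr$ are mutually orthogonal. For any polynomial $q$ one then has $q(H)r=\sum_{j=1}^p q(\nu_j)\,P_j r$, and because the summands are orthogonal, $q(H)r=0$ if and only if $q(\nu_j)P_j r=0$ for every $j$, i.e. if and only if $q(\nu_j)=0$ for every $j$ in the index set $J:=\{\,j:P_j r\neq 0\,\}$.

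From this identity the degree is immediately pinned down. The polynomial $\mu_{\!_{H,r}}$ of Definition \ref{defcharact} is by definition the monic polynomial of least degree with $\mu_{\!_{H,r}}(H)r=0$, and a monic polynomial vanishes at all $|J|$ distinct points $\{\nu_j:j\in J\}$ exactly when it is divisible by $\prod_{j\in J}(\lambda-\nu_j)$; the least-degree such monic polynomial is $\prod_{j\in J}(\lambda-\nu_j)$ itself. Hence $s=\deg\mu_{\!_{H,r}}=|J|$, and both directions of the proposition will follow once this equality is translated into the stated eigenvector representation.

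For the forward direction I would assume $\deg\mu_{\!_{H,r}}=s$, hence $|J|=s$, and write $r=\sum_{j=1}^p P_j r=\sum_{j\in J}P_j r$; setting $x_j:=\|P_j r\|\neq 0$ and $u_j:=P_j r/\|P_j r\|$, each $u_j$ is a unit eigenvector of $H$ for the eigenvalue $\nu_j$, and the $\nu_j$ with $j\in J$ are $s$ distinct eigenvalues, giving exactly the required representation. Conversely, given $r=\sum_{i=1}^s x_i u_i$ with all $x_i\neq 0$ and each $u_i$ an eigenvector for an eigenvalue $\nu_{(i)}$, the $\nu_{(i)}$ being $s$ distinct values, I would apply the projection onto the $\nu_{(i)}$-eigenspace: since eigenspaces of a symmetric matrix for distinct eigenvalues are mutually orthogonal, this yields $P_{\nu_{(i)}}r=x_i u_i\neq 0$, while the projection onto any eigenspace for an eigenvalue outside $\{\nu_{(1)},\dots,\nu_{(s)}\}$ kills every term and hence $r$ itself; thus $J$ corresponds precisely to $\{\nu_{(1)},\dots,\nu_{(s)}\}$, so $|J|=s$ and the degree is $s$ by the previous paragraph.

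I do not expect a genuine obstacle; the argument is essentially orthogonal bookkeeping. The one point that needs care is the converse, where the given eigenvectors $u_i$ are \emph{not} assumed orthonormal or a priori linearly independent: the argument must rely only on orthogonality of eigenspaces for distinct eigenvalues (so the $\nu_{(i)}$-projection recovers exactly $x_i u_i$), and must separately note that $r$ has no component in any other eigenspace, so no hidden nonzero components can inflate $|J|$. A minor secondary point is to observe that $\mu_{\!_{H,r}}$ is well defined at all — a nonzero annihilating polynomial exists by finite-dimensionality and the monic one of least degree is unique — but since the statement concerns only its degree, this can be invoked rather than reproved.
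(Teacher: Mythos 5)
Your proof is correct and rests on the same core idea as the paper's: decompose $r$ into its eigencomponents and observe that a polynomial $q$ satisfies $q(H)r=0$ exactly when $q$ vanishes at every eigenvalue whose eigencomponent of $r$ is nonzero. The only difference is organizational: you package the argument with the spectral projections $P_j$, which lets you identify the minimal polynomial as $\prod_{j\in J}(\lambda-\nu_j)$ up front and read off both directions cleanly, whereas the paper writes $r=\sum x_i u_i$ explicitly and handles the converse by a separate contradiction argument (if $r$ involved $t>s$ distinct eigenvalues then $\mu_{\!_{H,r}}$, of degree $s$, would have $t$ distinct roots); this is the same mathematics in slightly different bookkeeping.
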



\begin{proof} Suppose $r = \sum_{i=1}^s x_i u_i$, $x_i \not =0$, $u_i$'s eigenvectors of $H$ corresponding to distinct eigenvalues, $\lambda_1, \dots, \lambda_s$.  Consider the monic polynomial with roots $\lambda_i$, $i=1, \dots, s$. This polynomial can be expressed as:
\begin{equation}
p(\lambda)=(\lambda - \lambda_1)\times  \cdots \times(\lambda - \lambda_s)= a_0+ a_1 \lambda+ \dots+ a_{s-1} \lambda^{s-1}+ \lambda^s.
\end{equation}
Since $r=\sum_{i=1}^s x_i u_i$, $H^ir= \sum_{i=1}^s x_i \lambda^i u_i$. Thus,
\begin{equation}
p(Hr)=a_0r+a_1Hr+ \cdots + a_{d-1}H^{d-1}r+H^dr = \sum_{i=1}^s x_ip(\lambda_i)u_i.
\end{equation}
Since $p(\lambda_i)=0$ for $i=1, \dots, s$, it follows that $p(Hr)=0$.

Conversely, suppose $\mu_{\!_{H,r}}(Hr)=a_0 r+a_1Hr+ \dots + a_{d-1} H^{s-1}r+ H^sr$. We will show $r$ is a linear combination of $s$ eigenvectors of $H$ corresponding to distinct eigenvalues. On the one hand, by the first part of the proof, $r$ cannot be such a linear combination with fewer than $s$ eigenvectors.  On the other hand, suppose $r$ is a linear combination of $t >s$ eigenvectors of $H$, i.e., $r=\sum_{i=1}^t x_i u_i$. Substituting this for $r$  and $H^ir= \sum_{i=1}^s x_i \lambda^i u_i$ for $i=1, \dots, t$, into $\mu_{\!_{H,r}}(Hr)$ we get:
\begin{equation}
\mu_{\!_{H,r}}(Hr)=a_0 r+a_1Hr+ \dots + a_{d-1} H^{s-1}r+ H^sr= \sum_{i=1}^y x_i \mu_{\!_{H,r}}(\lambda_i) u_i=0.
\end{equation}
Since $u_i$'s are linearly dependent, we get $x_i \mu_{\!_{H,r}}(\lambda_i)=0$. Since $x_i \not =0$ for $i=1, \dots, t$, $\mu_{\!_{H,r}}(\lambda_i)=0$ for $i=1, \dots, t$.  But this implies $\mu_{\!_{H,r}}(\lambda)$, which is a polynomial of degree $s$, has $t >s$ roots, a contradiction.
\end{proof}

\subsection{Algebraic Development of High-Order CTA} \label{sec4.1}
Here, we will develop the $t^{\text{th}}$-order iteration function of the CTA family, $t=1, 2, \dots, m$, formally denoted as $F_t(r)$. On the one hand, given any nonzero $r_0 =b-Ax_0 \in \mathbb{R}^m$ and $t \in {1, \dots, m}$, the iterations of $F_t(r)$ can either solve $Ax=b$ or $A^TAx=A^Tb$. On the other hand, given any nonzero $r_0 =b-Ax_0 \in \mathbb{R}^m$, by evaluating the sequence $\{F_t(r_0)\}_{t=1}^m$, we will be able to determine a solution of $Ax=b$ or its normal equation. In fact, it suffices to evaluate the sequence up to $s$,  the degree of the minimum polynomial of $r_0$ with respect to $H$. We will prove that $Ax=b$ is solvable if and only if $F_s(r_0)=0$. Exclusively, $A^TAx=A^Tb$ is solvable if and only if $A^TF_s(r_0)=0$ but $F_s(r_0) \neq 0$. Thus, the first index for which one of the equations is satisfied determines the value of $s$.

To develop the formula for $F_t$ at a given $r \in \mathbb{R}^m$, as before, let $H=AA^T$. If $A$ is symmetric PSD, we can let $H=A$. We assume $Hr \not = 0$.
For $j=0, \dots, 2t$, set
$\phi_j=\phi_j(r)={r}^T\!\!H^jr$. Define
\begin{equation}
 F_t(r)=r - \sum_{i=1}^t \alpha_{t,i}(r) H^ir,
 \end{equation}
where, $\alpha_t(r)=(\alpha_{t,1}(r), \dots, \alpha_{t,t}(r))^T$ is the {\it minimum-norm} optimal solution to the minimization of
\begin{equation}
E_t(\alpha)=E_t(\alpha_1, \dots, \alpha_t) =\Vert r - \sum_{i=1}^t \alpha_i H^ir \Vert^2, \quad \alpha =(\alpha_1, \dots, \alpha_t)^T \in \mathbb{R}^t.
\end{equation}

Clearly, the minimum of $E_t(\alpha)$ is attained.  Since $E_t(\alpha)$ is convex and differentiable over $\mathbb{R}^t$, any minimizer is a stationary point. Expanding,
\begin{equation}
E_t(\alpha)
= \phi_0^2 + \sum_{i=1}^l \alpha_i^2 \phi_{2i} - 2 \sum_{ 1 \leq i \not =j \leq t} \alpha_i \alpha_j  \phi_{i+j}.
\end{equation}
For $i=1, \dots, t$, setting the partial derivatives of $E_t(\alpha)$  with respect to $\alpha_i$ equal to zero, we get the following $t \times t$ linear system, referred as {\it auxiliary equation}:

\begin{equation} \label{eqcoeff}
\begin{pmatrix}
\phi_2&\phi_3& \ldots&\phi_{t+1}\\
\phi_3&\phi_4& \ldots&\phi_{t+2}\\
\vdots&\vdots&\ddots&\vdots\\
\phi_{t+1}&\phi_{t+2}& \ldots&\phi_{2t}\\
\end{pmatrix}
\begin{pmatrix}
\alpha_1\\
\alpha_2\\
\vdots\\
\alpha_t\\
\end{pmatrix}
=
\begin{pmatrix}
\phi_1\\
\phi_2\\
\vdots\\
\phi_t\\
\end{pmatrix}.
\end{equation}
Let the coefficient matrix in (\ref{eqcoeff}) be denoted by  $M_t(r)=(m_{ij}(r))$. Then  $m_{ij}(r)=\phi_{i+j}(r)=r^TH^{i+j}r$, where $1 \leq i,j \leq t$. Let $\beta_t(r)=(\phi_1(r), \dots, \phi_t(r))^T$, and $\alpha=(\alpha_1, \dots, \alpha_t)^T$. Then $\alpha_t(r)=(\alpha_{t,1}(r), \dots, \alpha_{t,t}(r))^T$ is the minimum-norm solution to the linear system:
\begin{equation}
M_t(r) \alpha = \beta_t(r).
\end{equation}

\begin{example} When $t=2$ and $M_2(r)$ is invertible we get:
\begin{equation} \label{thmeq200XX}
F_2(r)=r-\frac{\phi_1(r) \phi_4(r)-\phi_2(r) \phi_3(r)}{\phi_2(r) \phi_4(r)- \phi_3^2(r)}Hr-\frac{\phi^2_2(r) - \phi_1(r) \phi_3(r)}{\phi_2(r) \phi_4(r)- \phi^2_3(r)}H^2r.
\end{equation}
For the same input of Example \ref{example1}, namely $H=A={\rm diag}(1,\cdots, m)$, $r_0=(1, \dots, 1)^T$, setting  $m=100$ it can be shown $\Vert F_2(r_0) \Vert /\Vert r_0 \Vert \approx .327$.  This is an improvement over  $\Vert F_1(r_0) \Vert /\Vert r_0 \Vert \approx .497$.
\end{example}

Each iteration of $F_t$ requires solving the auxiliary equation. We observe the following property on $M_t(r)$.

\begin{remark}
Given any residual $r$ where $Hr \not =0$,
the matrix $M_t(r)$ is a {\it positive} (i.e. all its entries are positive).
Also, it can be shown  $M_t(r)$ is symmetric PSD (see \ref{vand1}). It can be shown that there is a unique diagonal matrix $D$ with positive diagonal entries such that $DM_t(r)D$ is
{\it doubly stochastic}. In theory, this can be achieved in polynomial-time in two different ways. Firstly,  as a positive matrix, via the ellipsoid method as proved in \cite{KalKhaDAD1}, applicable to positive matrices. Secondly, this diagonally scaling can be achieved via the path-following interior-point algorithm developed in \cite{KalKhaDAD3}.  However, one could also compute diagonal scaling via the RAS (row-column scaling) method, a {\it fully polynomial-time approximation scheme}, see \cite{KAlKhaDAD2}.

Thus, to solve the auxiliary equation, we could use the RAS method to diagonally scale $M_t(r)$ as a preconditioning. This is in the sense that all entries of the scaled matrix will lie within the interval $(0,1)$, without all being arbitrarily small. In practice, $t$ need not be large. In fact $t \leq 10$ may suffice.
Once such a diagonal matrix $D$, or an approximation of it, is at hand, instead of solving $M_t(r) \alpha = \beta(r)$, we solve for $z$ in $DM_t(r)D z=D\beta$ and set $\alpha(r)=Dz$.  Indeed pre and post multiplication of a positive definite matrix  by diagonal matrices in order to improve its condition number is a preconditioning technique used in linear system, see e.g. Braatz and Morari \cite{BRAATZ94}. However, the utility of the above-mentioned diagonal scaling of $M_t(r)$ as means of improving condition number remains to be investigated theoretically.
\end{remark}

We will next examine the convergence properties under iterations of $F_t$. First a definition on terminology:

\begin{definition}
Given $t \in \{1, \dots, m\}$,  the {\it orbit} at $r_0 \in \mathbb{R}^m$ of $F_{t}(r)$ is the infinite sequence
\begin{equation}
O^+_t(r_0)=\{ r_k =F_{t}(r_{k-1})\}_{k=1}^\infty.
\end{equation}
The {\it point-wise orbit} at $r_0 \in \mathbb{R}^m$ is  the finite sequence
\begin{equation}
O^*(r_0)=\{F_{t}(r_0)\}_{t=1}^m.
\end{equation}
The corresponding  {\it least-squares residual orbit} at $r_0$ and {\it least-squares residual point-wise orbit}  are defined as
\begin{equation}
O^+_t(r_0, A^T)=\{ A^Tr_k =A^TF_{t}(r_{k-1})\}_{k=1}^\infty, \quad
O^*(r_0,A^T)=\{A^TF_{t}(r_0)\}_{t=1}^m.
\end{equation}
\end{definition}

\subsection{Relating Magnitudes of Consecutive Residuals in Iterations of High-Order CTA}\label{sec4.2}

The following auxiliary theorem generalizes Theorem \ref{thm1} proved for first-order CTA.

\begin{theorem} \label{thm3} With $A$ an $m \times n$ matrix, let $H=AA^T$. If $A$ is symmetric PSD, we can let $H=A$.  Let $\kappa^+$ be the ratio of the largest eigenvalue, $\lambda_{\max}$, of $H$ to its smallest positive eigenvalue.  Let $c(H)={1}/({\kappa^+ \lambda_{\max}})$.

\noindent (i)  Given $t \in \{1, \dots, m\}$, suppose $r \in \mathbb{R}^m$ is a linear combination of eigenvector of $H$ corresponding to positive eigenvalues. Assume $Hr \not =0$. Then,

\begin{equation} \label{thm3eq1}
\Vert F_t(r) \Vert \leq   \bigg (\frac{\kappa^+-1}{\kappa^++1} \bigg )^t \Vert r \Vert.
\end{equation}

\noindent (ii) Given arbitrary $r \in \mathbb{R}^m$, let $F_1^{\circ 0}(r)=r$ and for $j \geq 1$, let $F_1^{\circ j}(r)$ be the $j$-fold composition of $F_1$ with itself at $r$. Suppose for $j=0, \dots, t-1$, $F^{\circ j}_1(r)^THF^{\circ j}_1(r) >0$. Then
\begin{equation} \label{thm32eqgeneral}
\Vert F_t(r) \Vert^2  \leq   \Vert r \Vert^2   -  c(H) \bigg (\sum_{j=0}^{t-1}F^{\circ j}_1(r)^THF^{\circ j}_1(r) \bigg ).
\end{equation}
\end{theorem}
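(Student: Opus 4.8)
The plan is to reduce both parts to the already-established first-order estimates of Theorem \ref{thm1} by exploiting the defining optimality of $F_t$. The key observation is that $F_t(r)$ is, by construction, the minimum-norm element of the affine set $\{\,r-\sum_{i=1}^t\alpha_iH^ir:\alpha\in\mathbb{R}^t\,\}$; equivalently, writing $p(\lambda)=1-\sum_{i=1}^t\alpha_i\lambda^i$, we have $F_t(r)=p^\star(H)r$, where $p^\star$ attains
\begin{equation*}
\min\bigl\{\,\Vert p(H)r\Vert : p\ \text{a polynomial},\ \deg p\le t,\ p(0)=1\,\bigr\}.
\end{equation*}
Geometrically $F_t(r)=r-\Pi_tr$ with $\Pi_t$ the orthogonal projection onto ${\rm span}\{Hr,\dots,H^tr\}$, so this minimum is attained regardless of the rank of $M_t(r)$. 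Hence, for \emph{every} polynomial $p$ with $\deg p\le t$ and $p(0)=1$ one has $\Vert F_t(r)\Vert\le\Vert p(H)r\Vert$.

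Next I would identify one convenient competitor polynomial: the $t$-fold composition $F_1^{\circ t}$. Each application of $F_1$ sends a vector $u$ to $(I-\alpha_{1,1}(u)H)u$, i.e.\ multiplies by a degree-one polynomial in $H$ with constant term $1$; hence, whenever the iterates are defined, $F_1^{\circ t}(r)=p_t(H)r$ for some polynomial $p_t$ with $\deg p_t\le t$ and $p_t(0)=1$. Therefore $\Vert F_t(r)\Vert\le\Vert F_1^{\circ t}(r)\Vert$. (If $HF_1^{\circ j}(r)=0$ for some $j<t$ the composition stops, but then $F_1^{\circ j}(r)=p_j(H)r$ with $\deg p_j\le j\le t$ and $p_j(0)=1$ is itself an admissible competitor, so $\Vert F_t(r)\Vert\le\Vert F_1^{\circ j}(r)\Vert$; and if moreover $F_1^{\circ j}(r)=0$ then $F_t(r)=0$. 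Thus degenerate steps only strengthen the conclusion.)

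For part (i), the hypothesis that $r$ is a linear combination of eigenvectors of $H$ with positive eigenvalues is preserved under $F_1$, since $F_1(u)$ is a linear combination of $u$ and $Hu$. Hence each $F_1^{\circ j}(r)$, $0\le j\le t$, lies in this invariant subspace, and while it is nonzero $HF_1^{\circ j}(r)\neq0$, so $F_1$ is well defined and Theorem \ref{thm1}(i) applies at step $j$, giving $\Vert F_1^{\circ(j+1)}(r)\Vert\le\tfrac{\kappa^+-1}{\kappa^++1}\Vert F_1^{\circ j}(r)\Vert$. Iterating $t$ times yields $\Vert F_1^{\circ t}(r)\Vert\le\bigl(\tfrac{\kappa^+-1}{\kappa^++1}\bigr)^t\Vert r\Vert$, and combining with $\Vert F_t(r)\Vert\le\Vert F_1^{\circ t}(r)\Vert$ proves (\ref{thm3eq1}).

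For part (ii), the hypothesis $F_1^{\circ j}(r)^THF_1^{\circ j}(r)>0$ for $j=0,\dots,t-1$ guarantees $HF_1^{\circ j}(r)\neq0$, so the chain $r,F_1(r),\dots,F_1^{\circ t}(r)$ is well defined and Theorem \ref{thm1}(ii) gives, for each such $j$,
\begin{equation*}
\Vert F_1^{\circ(j+1)}(r)\Vert^2\le\Vert F_1^{\circ j}(r)\Vert^2-c(H)\,F_1^{\circ j}(r)^THF_1^{\circ j}(r).
\end{equation*}
Summing these $t$ inequalities telescopically over $j=0,\dots,t-1$ gives $\Vert F_1^{\circ t}(r)\Vert^2\le\Vert r\Vert^2-c(H)\sum_{j=0}^{t-1}F_1^{\circ j}(r)^THF_1^{\circ j}(r)$, and combining with $\Vert F_t(r)\Vert^2\le\Vert F_1^{\circ t}(r)\Vert^2$ yields (\ref{thm32eqgeneral}). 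The only genuinely non-routine step is the first one: recognizing that $F_t(r)$, defined through the auxiliary linear system (\ref{eqcoeff}), is exactly the optimal degree-$\le t$ polynomial filter applied to $r$ and therefore dominates the particular filter $F_1^{\circ t}$. Once this ``$F_t$ beats $F_1^{\circ t}$'' comparison is in place, both claims follow by simply iterating the $t=1$ bounds, and I expect no further difficulty.
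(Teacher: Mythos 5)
Your proposal is correct and follows the same strategy as the paper. Both hinge on the comparison $\Vert F_t(r)\Vert\le\Vert F_1^{\circ t}(r)\Vert$ (the paper's inequality (\ref{claimt})), which holds because $F_1^{\circ t}(r)$ has the form $r-\sum_{i=1}^t\beta_{t,i}H^ir$ and is therefore an admissible competitor in the minimization defining $F_t(r)$; and both then iterate the first-order bounds of Theorem~\ref{thm1} along the $F_1$-orbit to obtain the geometric factor in (i) and the telescoping sum in (ii). You state the comparison in the equivalent language of optimal degree-$\le t$ polynomial filters with $p(0)=1$, and you explicitly verify that the $F_1$-chain stays well-defined in (i) (preservation of the positive-eigenvalue invariant subspace, hence $HF_1^{\circ j}(r)\neq 0$ while the iterate is nonzero, with the degenerate case only strengthening the bound) --- a point the paper passes over without comment but which is implicitly covered by its hypothesis on $r$. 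The reasoning is sound and the details you add patch the only slightly underexplained step in the paper's argument.
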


\begin{proof}

(i):  We have already proved (\ref{thm3eq1}) for $t=1$, see Theorem \ref{thm1}.
We claim:
\begin{equation} \label{claimt}
\Vert F_t(r) \Vert \leq \Vert F_1^{\circ t}(r) \Vert.
\end{equation}
Using a straightforward induction argument and the definition of $F_1(r)$, we get the following equation:
\begin{equation}
F_1^{\circ t}(r) =r- \sum_{i=1}^t  \beta_{t,i}H^ir,
\end{equation}
where $\beta_{t,i}$'s are a set of constants dependent on $r$.
By the definition of $F_t(r)$ and the fact that $\alpha_r(t)$ (the solution of the auxiliary system) optimizes over all constant coefficients of $H^ir$ terms for $i=1, \dots, t$, we can conclude that (\ref{claimt}) follows naturally. Using (\ref{claimt}), the fact that for $t=1$, (\ref{thm3eq1}) is already proven, and by induction on $t$, for arbitrary $t \leq m$ we get,
\begin{equation} \label{claimt2}
\Vert F_t(r) \Vert \leq \Vert F_1(F^{\circ (t-1)}_1(r)) \Vert \leq \bigg (\frac{\kappa^+-1}{\kappa^++1} \bigg ) \Vert F^{\circ (t-1)}_1(r) \Vert \leq \bigg (\frac{\kappa^+-1}{\kappa^++1} \bigg )^t \Vert r \Vert.
\end{equation}

(ii): We have already proved (\ref{thm32eqgeneral}) for $t=1$, see Theorem \ref{thm1}.  Using (\ref{claimt}) and  the case of $t=1$, for arbitrary $t \leq m$ we have,
$$
\Vert F_t(r) \Vert^2 \leq \Vert F_1(F^{\circ (t-1)}_1(r)) \Vert^2 \leq \Vert F^{\circ (t-1)}_1(r) \Vert^2 -c(H) F^{\circ (t-1)}_1(r)^THF^{\circ (t-1)}_1(r) \leq
$$
$$
\Vert F^{\circ (t-2)}_1(r) \Vert^2 -c(H) \bigg (F^{\circ (t-1)}_1(r)^THF^{\circ (t-1)}_1(r)  +
F^{\circ (t-2)}_1(r)^THF^{\circ (t-2)}_1(r) \bigg) \leq \cdots \leq $$
\begin{equation} \label{firsteq2}
\Vert r \Vert^2   -  c(H) \bigg(\sum_{j=0}^{t-1}F^{\circ j}_1(r)^THF^{\circ j}_1(r) \bigg).
\end{equation}
\end{proof}

\subsection{Convergence Properties of High-Order CTA} \label{sec4.3}

In Theorem \ref{thm4} and Theorem \ref{thm4part2}, we prove the main result on the convergence properties of the orbit and point-wise orbit of the family $\{F_t\}$. Some parts of Theorem \ref{thm4part2} generalize Theorem \ref{thm2}, which was proved for $t=1$. The proof of some parts is analogous to corresponding parts in Theorem \ref{thm2}. However, the proof of other parts, by virtue of their generality, is more technical than the proof of the special case of $t=1$. Moreover, Theorem \ref{thm4part2} establishes properties that are attributed to the point-wise evaluation of the sequence $\{F_t, t=1, \dots, m\}$. We first state and prove an auxiliary result.

\begin{proposition} \label{propxx} Let $H$ be as before.
Given $r \in \mathbb{R}^m$, suppose for $j=0, \dots, t-1$, $F^{\circ j}_1(r)^THF^{\circ j}_1(r) >0$. Then for $j=1,\dots, t-1$ we have,
\begin{equation} \label{rprimegen22}
F_1^{\circ j}(r)= r- \sum_{i=0}^{j-1} \alpha_{1,1}\big (F_1^{\circ i}(r) \big ) H F_1^{\circ i}(r), \quad \alpha_{1,1}\big (F_1^{\circ i}(r) \big)= \frac{F_1^{\circ i}(r)^T H F_1^{\circ i}(r))}{F_1^{\circ i}(r)^T H^2 F_1^{\circ i}(r)}.
\end{equation}
If $r=b-Ax$, for $j=1 \dots, t-1$, then
\begin{equation} \label{xhathat}
A^TF_1^{\circ j}(r) =  A^Tb-A^TA \widehat x(j),
\end{equation}
where
\begin{equation} \label{xxhat}
\widehat x(j)= x+ \begin{cases}
\sum_{i=0}^{j-1} \alpha_{1,1}\big (F_1^{\circ i}(r) \big ) F_1^{\circ i}(r), & \text{if ~}
H=A\\
\sum_{i=0}^{j-1} \alpha_{1,1}\big (F_1^{\circ i}(r) \big ) A^T F_1^{\circ i}(r), & \text{if~} H=AA^T.
\end{cases}
\end{equation}
\end{proposition}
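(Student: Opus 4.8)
The plan is to establish the two displayed identities in turn: the first by a short induction on $j$ that unrolls the recurrence $F_1(v)=v-\alpha_{1,1}(v)Hv$, and the second by applying $A$ to the formula (\ref{xxhat}) for $\widehat x(j)$ and recognizing the outcome through the first identity.

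Before running the induction I would check that the standing hypothesis makes every coefficient $\alpha_{1,1}\big(F_1^{\circ i}(r)\big)$ meaningful. Expanding a vector $v$ in an orthonormal eigenbasis of the PSD matrix $H$ shows that $v^THv>0$ forces both $v^TH^2v>0$ and $Hv\neq0$: a single eigencomponent of $v$ along a positive eigenvalue already contributes a strictly positive amount to $v^THv$ and to $v^TH^2v$, and a nonzero amount to $Hv$. Applying this with $v=F_1^{\circ i}(r)$ for $i=0,\dots,t-1$ shows, in order, that $F_1$ is in fact defined at each of $r,F_1(r),\dots,F_1^{\circ(t-1)}(r)$ and that each denominator $F_1^{\circ i}(r)^TH^2F_1^{\circ i}(r)$ is strictly positive; the $j=0$ instance is just the running assumption $Hr\neq0$. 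The first identity then follows by induction: for $j=1$ it is the definition of $F_1$, and for the step from $j$ to $j+1$ one writes $F_1^{\circ(j+1)}(r)=F_1^{\circ j}(r)-\alpha_{1,1}\big(F_1^{\circ j}(r)\big)HF_1^{\circ j}(r)$, substitutes the inductive expression for $F_1^{\circ j}(r)$ into the first term, and merges the two sums into one ranging over $i=0,\dots,j$.

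For the second identity I would left-multiply (\ref{xxhat}) by $A$. In the case $H=A$ the increment at step $i$ is $\alpha_{1,1}\big(F_1^{\circ i}(r)\big)F_1^{\circ i}(r)$ and $AF_1^{\circ i}(r)=HF_1^{\circ i}(r)$; in the case $H=AA^T$ the increment is $\alpha_{1,1}\big(F_1^{\circ i}(r)\big)A^TF_1^{\circ i}(r)$ and $AA^TF_1^{\circ i}(r)=HF_1^{\circ i}(r)$. Either way $A\widehat x(j)=Ax+\sum_{i=0}^{j-1}\alpha_{1,1}\big(F_1^{\circ i}(r)\big)HF_1^{\circ i}(r)$, so $b-A\widehat x(j)=(b-Ax)-\sum_{i=0}^{j-1}\alpha_{1,1}\big(F_1^{\circ i}(r)\big)HF_1^{\circ i}(r)=r-\sum_{i=0}^{j-1}\alpha_{1,1}\big(F_1^{\circ i}(r)\big)HF_1^{\circ i}(r)$, which equals $F_1^{\circ j}(r)$ by the first identity. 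Left-multiplying by $A^T$ yields $A^Tb-A^TA\widehat x(j)=A^TF_1^{\circ j}(r)$, as claimed. This is precisely the analogue of part (I) of Theorem \ref{thm2} with a general starting point $x$ in place of $x_0=A^Tw_0$, so one could alternatively reuse the induction from that proof essentially verbatim.

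I do not expect a genuine obstacle here. The only place demanding a little care is the well-definedness step — propagating positivity from $v^THv$ to $v^TH^2v$ and to $Hv\neq0$ so that the iterates and their coefficients all exist — together with keeping the index ranges straight so that the hypothesis indeed covers every iterate at which $F_1$ is evaluated.
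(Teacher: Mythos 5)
Your proof is correct and follows essentially the same route as the paper: an induction on $j$ that unrolls the recurrence $F_1(v)=v-\alpha_{1,1}(v)Hv$ to obtain the first identity, followed by applying $A$ (and then $A^T$) to the formula for $\widehat x(j)$ to obtain the second. Your preliminary remark that $v^THv>0$ already forces $v^TH^2v>0$ and $Hv\neq 0$ is a helpful well-definedness check that the paper leaves implicit, and your inductive step is written correctly where the paper has a small typo ($H^{j-1}r$ in place of $HF_1^{\circ(j-1)}(r)$).
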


\begin{proof} For $j=1$, $F_1^{\circ 1}(r)=F_1(r)$ so that (\ref{rprimegen22}) gives $F_1(r)=r-\alpha_{1,1}(r)Hr$, as defined before. For arbitrary $j$, we prove this by induction.  For $j \geq 2$,  we have:
\begin{equation}
F_1^{\circ j}(r)= F_1(F_1^{\circ (j-1)}(r))=F_1^{\circ (j-1)}(r) - \alpha_{1,1} \big (F_1^{\circ (j-1)}(r) \big )H^{j-1}r.
\end{equation}
By induction hypothesis, (\ref{rprimegen22}) is true for $j-1$. Substituting for $F_1^{\circ (j-1)}(r)$, (\ref{rprimegen22}) is true for $j$.

Next we prove (\ref{xhathat}). Using (\ref{rprimegen22}), and substituting $r=b-Ax$  we have,
$$A^TF^{\circ j}(r)= A^Tb-A^TAx - A^T\sum_{i=0}^{j-1} \alpha_{1,1}\big (F_1^{\circ i}(r) \big ) H F_1^{\circ i}(r)=$$
\begin{equation}
A^Tb -\begin{cases}
A^TA \big (x + \sum_{i=0}^{j-1} \alpha_{1,1}\big (F_1^{\circ i}(r) \big ) F_1^{\circ i}(r) \big ), & \text{if ~}
H=A\\
A^TA \big (x + \sum_{i=0}^{\circ (j-1)} \alpha_{1,1}\big (F_1^{\circ i}(r) \big ) A^T F_1^{\circ i}(r) \big ), & \text{if~} H=AA^T.
\end{cases}= A^Tb-A^TA \widehat x(j).
\end{equation}

\end{proof}

\begin{theorem}  \label{thm4} Consider solving $Ax=b$ or $A^TAx=A^Tb$, where $A$ is an $m \times n$ matrix. Let $H=AA^T$. If $A$ is symmetric PSD, $H$ can be taken to be $A$ itself.  Let the  spectral decomposition of $H$ be $U\Lambda U^T$, where $U=[u_1, \dots, u_m]$ is the matrix of orthonormal eigenvectors, and $\Lambda={\rm diag}(\lambda_1, \dots, \lambda_m)$ the diagonal matrix of eigenvalues.  Let $\kappa^+$, be as before and $c(H)={1}/{\kappa^+ \lambda_{\max}}$. Given $t \in \{2, \dots, m\}$,   $r \in \mathbb{R}^m$ with $Hr \not =0$, let
\begin{equation}  \label{FTR}
F_t(r)= r - \sum_{i=1}^t \alpha_{t,i}(r) H^ir,
\end{equation}
where $\alpha_t(r)=(\alpha_{t,1}(r), \cdots, \alpha_{t,t}(r))^T$ is the solution to the auxiliary equation (\ref{eqcoeff}).

Given $x_0 \in \mathbb{R}^n$, let $r_0 =b-Ax_0\in \mathbb{R}^m$.
Given $t \leq m$, set $x_0(t)=x_0$, $r_0(t)=r_0$ and for any $k \geq 1$, let
\begin{equation} \label{thmeq3g}
r_{k}(t)=F_t(r_{k-1}(t))= F^{\circ k}_t(r_0)= \overbrace{F_t \circ F_t \circ \cdots \circ F_t}^{k ~~ \text{times}}(r_0),
\end{equation}
the $k$-fold composition of $F_t$, where we assume for $j=0, \dots, k-1$, $Hr_j(t) \not =0$. Also for any $k \geq 1$, let
\begin{equation} \label{thmeq4gen}
x_{k}(t)= x_{k-1}(t)  + \begin{cases}
\sum_{i=1}^t \alpha_{t,i} \big (r_k(t) \big ) H^{i-1} r_k(t), & \text{if ~} H=A\\
\sum_{i=1}^t \alpha_{t,i} \big (r_k(t) \big ) A^TH^{i-1} r_k(t), & \text{if~} H=AA^T.
\end{cases}
\end{equation}

\noindent (I) For any $k \geq 0$,
\begin{equation} \label{rkt}
r_{k}(t)=b-Ax_{k}(t).
\end{equation}

\noindent (II) Suppose $Ax=b$ is solvable. Then
\begin{equation} \label{eqrate2p}
\Vert r_k(t) \Vert \leq \bigg (\frac{\kappa^+-1}{\kappa^++1} \bigg)^{tk}
\Vert r_0 \Vert, \quad \forall k \geq 1.
\end{equation}

\noindent (III)
Given $\varepsilon \in (0,1)$, for some $k$ iterations of $F_t$, satisfying
\begin{equation}
k \leq \frac{\kappa^+}{t} \ln \bigg (\frac{\Vert r_0 \Vert}{\varepsilon} \bigg ),
\end{equation}
\begin{equation}
\Vert r_k(t) \Vert \leq \varepsilon.
\end{equation}

\noindent (IV)
Regardless of the solvability of $Ax=b$,
\begin{equation} \label{eqbounderrr88gg}
\Vert r_k(t) \Vert^2 \leq \Vert r_0 \Vert^2 - c(H) \bigg (\sum_{i=0}^{t-1}
\sum_{j=0}^{k-1}F^{\circ i}_1(r_{j}(1))^THF^{\circ i}_1(r_{j}(1)) \bigg).
\end{equation}

\noindent (V) Given $\varepsilon \in (0,1)$, for some $k$ iterations of $F_t$, satisfying
\begin{equation}
k \leq \frac{\kappa^+}{\varepsilon} \lambda_{\max} \Vert r_0 \Vert^2,
\end{equation}
\begin{equation} \label{twoineq}
r_k(t)^THr_k(t)  \leq \varepsilon.
 \end{equation}
Moreover, for some $k$  iterations of $F_t$, satisfying
\begin{equation} \label{bbbt}
k \leq \frac{\kappa^+}{t\varepsilon} \lambda_{\max} \Vert r_0 \Vert^2,
\end{equation}
at least one of the following $t$ conditions is satisfied,
\begin{equation} \label{tp1ineq}
 F^{\circ j}_1(r_k(t))^THF^{\circ j}_1(r_k(t))  \leq \varepsilon, \quad j=0, \dots, t-1.
 \end{equation}

\noindent (VI) The corresponding approximate solutions and error bounds are:  \\

\begin{itemize}

\item
If $\Vert r_k(t) \Vert \leq \varepsilon$, then $x_k(t)$, see (\ref{thmeq4gen}),
satisfies
\begin{equation} \label{thm4case1gen}
\Vert A x_k(t) - b \Vert \leq \varepsilon.
\end{equation}

\item
If $\Vert r_k(t) \Vert > \varepsilon$ but $r_k(t)^THr_k(t) \leq \varepsilon$, then
\begin{equation} \label{thmeq4gggnewgen}
\Vert A^TA x_{k}(t) - A^Tb \Vert \leq \begin{cases}
\sqrt{\varepsilon} \Vert A \Vert^{1/2} , & \text{if ~} H=A\\
\sqrt{\varepsilon}, & \text{if~} H=AA^T.
\end{cases}
\end{equation}

\item
If  $t \geq 2$ and  $\Vert r_k(t) \Vert > \varepsilon$, $r_k(t)^THr_k(t) > \varepsilon$ but
$\Vert F^{\circ j}_1(r_k(t))^THF^{\circ j}_1(r_k(t)) \Vert  \leq \varepsilon$, where $j$ is the smallest index from the set  $\{1, \dots, t-1\}$ such that the  inequality is satisfied, set
\begin{equation} \label{xxhatxx}
\widehat x_k(t)= x_k(t)+ \begin{cases}
\sum_{i=0}^{j-1} \alpha_{1,1}\big (F_1^{\circ i}(r) \big ) F_1^{\circ i}(r), & \text{if ~}
H=A\\
\sum_{i=0}^{j-1} \alpha_{1,1}\big (F_1^{\circ i}(r) \big ) A^T F_1^{\circ i}(r), & \text{if~} H=AA^T.
\end{cases}
\end{equation}
Then,
\begin{equation} \label{thmeq4ggg}
\Vert A^TA \widehat x_{k}(t) - A^Tb \Vert \leq \begin{cases}
\sqrt{\varepsilon} \Vert A \Vert^{1/2} , & \text{if ~} H=A\\
\sqrt{\varepsilon}, & \text{if~} H=AA^T.
\end{cases}
\end{equation}
\end{itemize}
\end{theorem}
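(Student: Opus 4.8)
The plan is to derive (I)--(VI) by assembling the structural facts already in place: Theorem~\ref{thm3} (the comparison $\|F_t(r)\|\le\|F_1^{\circ t}(r)\|$ and its one-step decrease estimates), Proposition~\ref{syst} (solvability is equivalent to the residual lying in the span of the eigenvectors of $H$ with positive eigenvalues), and Proposition~\ref{propxx} (the identity relating $A^TF_1^{\circ j}(r)$ to an explicit iterate). For (I) I would induct on $k$: the case $k=0$ is the definition, and multiplying the update (\ref{thmeq4gen}) by $A$---using $AH^{i-1}=H^i$ when $H=A$ and $A(A^TH^{i-1})=H^i$ when $H=AA^T$---collapses the increment $A\bigl(x_k(t)-x_{k-1}(t)\bigr)$ to $r_{k-1}(t)-F_t(r_{k-1}(t))=r_{k-1}(t)-r_k(t)$, which with the inductive hypothesis gives (\ref{rkt}).

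For (II), solvability of $Ax=b$ forces $b$, and hence $r_0=b-Ax_0$, into the column space of $A$, which coincides with the span of the eigenvectors of $H$ (be it $A$ or $AA^T$) with positive eigenvalues; this subspace is $H$-invariant, so each $r_k(t)=F_t(r_{k-1}(t))$ remains in it, Theorem~\ref{thm3}(i) applies at every step, and iterating the factor $\left(\tfrac{\kappa^+-1}{\kappa^++1}\right)^t$ yields (\ref{eqrate2p}). Part (III) is then the usual estimate $1+\alpha\le e^{\alpha}$, i.e. $\tfrac{\kappa^+-1}{\kappa^++1}\le\exp\!\bigl(\tfrac{-2}{\kappa^++1}\bigr)$, together with $\tfrac{\kappa^++1}{2}\le\kappa^+$.

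For (IV) I would apply Theorem~\ref{thm3}(ii) with $r=r_j(t)$ for $j=0,\dots,k-1$ and add; the $\|r_j(t)\|^2$ terms telescope and the surviving term is precisely the double sum of the nonnegative quantities $F_1^{\circ i}(r_j(t))^THF_1^{\circ i}(r_j(t))$. Part (V) then follows by the contradiction argument used for $t=1$ in Theorem~\ref{thm2}: if $r_j(t)^THr_j(t)>\varepsilon$ for every $j<k$, keeping only the $i=0$ terms in (\ref{eqbounderrr88gg}) gives $0\le\|r_k(t)\|^2\le\|r_0\|^2-k\,c(H)\varepsilon$, which bounds $k$ by $\kappa^+\lambda_{\max}\|r_0\|^2/\varepsilon$; and if, for every $j<k$, \emph{all} $t$ quantities $F_1^{\circ i}(r_j(t))^THF_1^{\circ i}(r_j(t))$, $i=0,\dots,t-1$, exceed $\varepsilon$, then the inner sum exceeds $t\varepsilon$ and the same inequality forces $k\le\kappa^+\lambda_{\max}\|r_0\|^2/(t\varepsilon)$, so one of the conditions (\ref{tp1ineq}) must be met within that many iterations.

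Finally, for (VI): the first bullet is immediate from (I). For the second, when $H=AA^T$ we have $r_k(t)^THr_k(t)=\|A^Tr_k(t)\|^2$ and $A^Tr_k(t)=A^Tb-A^TAx_k(t)$ by (I), so $\|A^TAx_k(t)-A^Tb\|\le\sqrt{\varepsilon}$; when $H=A$ we write $Hr_k(t)=H^{1/2}\bigl(H^{1/2}r_k(t)\bigr)$, bound its norm by $\|H\|^{1/2}\sqrt{\varepsilon}=\|A\|^{1/2}\sqrt{\varepsilon}$, and use $Hr_k(t)=A^Tb-A^TAx_k(t)$ since $A=A^T$. For the third bullet I would invoke Proposition~\ref{propxx} with $r=r_k(t)=b-Ax_k(t)$ (legitimate by (I)): it gives $A^TF_1^{\circ j}(r_k(t))=A^Tb-A^TA\widehat x_k(t)$ with $\widehat x_k(t)$ exactly the vector in (\ref{xxhatxx}), and then $\|A^TF_1^{\circ j}(r_k(t))\|\le\|A\|^{1/2}\sqrt{\varepsilon}$ (respectively $\le\sqrt{\varepsilon}$) is read off from $F_1^{\circ j}(r_k(t))^THF_1^{\circ j}(r_k(t))\le\varepsilon$ exactly as in the second bullet. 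The one delicate point---and the main obstacle---is that Theorem~\ref{thm3}(ii) and Proposition~\ref{propxx} presuppose positivity of the quantities $F_1^{\circ i}(r_j(t))^THF_1^{\circ i}(r_j(t))$ for $i<t$; whenever such a quantity vanishes, or merely falls below $\varepsilon$, the run has already produced the relevant approximate solution, which is exactly why the three-way split in (VI)---with its smallest qualifying index $j$---is exhaustive. I will make this branching explicit so that the telescoping in (IV)--(V) and the identities in (VI) are invoked only where their hypotheses are valid.
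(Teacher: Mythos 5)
Your proposal is correct and follows the paper's own line of argument in all six parts: (I) by induction on $k$ after multiplying the $x_k(t)$-update by $A$; (II)--(III) via Theorem~\ref{thm3}(i) together with Proposition~\ref{syst} and the bound $1+\alpha\le e^{\alpha}$; (IV)--(V) by telescoping Theorem~\ref{thm3}(ii); and (VI) from (I) plus Proposition~\ref{propxx}. There is, however, one place where your route is genuinely cleaner and actually closes a small gap. In part (II) the paper's proof appeals to ``Since $x_0=A^Tw_0$'' even though the theorem's hypotheses only state ``Given $x_0\in\mathbb{R}^n$''; the paper carries this special form over from Theorem~\ref{thm2} without restating it. You avoid the issue entirely by noting that solvability of $Ax=b$ forces $b\in\mathrm{col}(A)$, hence $r_0=b-Ax_0\in\mathrm{col}(A)$ for \emph{any} $x_0$, and that $\mathrm{col}(A)=\mathrm{col}(AA^T)$ (respectively $\mathrm{col}(A)$ when $H=A$ is symmetric PSD) coincides with the span of eigenvectors of $H$ with positive eigenvalues. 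You also make explicit that this span is $H$-invariant, so the iterates $r_k(t)$ stay inside it --- a step the paper leaves implicit when it asserts that $r_{k-1}(t)$ is such a linear combination after citing Proposition~\ref{syst} only for $r_0$. Your final paragraph, which notes that Theorem~\ref{thm3}(ii) and Proposition~\ref{propxx} require the quantities $F_1^{\circ i}(r)^THF_1^{\circ i}(r)$ to be positive and that the three-way case split in (VI) is precisely what handles the degenerate cases, is a welcome clarification not spelled out in the paper. One cosmetic remark: in (\ref{eqbounderrr88gg}) the paper writes $r_j(1)$ where the telescoping you describe (and the paper's own invocation of Theorem~\ref{thm3}(ii) at $r=r_{j-1}(t)$) produces $r_j(t)$; your reading $r_j(t)$ is the one consistent with the derivation.
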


\begin{proof}

\noindent (I): We prove (\ref{rkt}) by induction on $k$.  By definition, this holds for $k=0$. Assume true for $k-1$. Multiplying (\ref{thmeq4gen}) by $A$, using the definition of $H$, and  $F_t(r)$, we get:
$$Ax_k(t)=Ax_{k-1}(t) +r_{k-1}(t)- F_t(r_{k-1}(t)).$$
Substituting $r_{k-1}(t)= b - Ax_{k-1}(t)$, and $F_t(r_{k-1}(t))=r_k(t)$, we get $Ax_k(t)=b - r_k(t)$. Hence true for $k$.

\noindent (II):  Since $Ax=b$ is solvable, $AA^Tw=b$ is solvable. Since $x_0=A^Tw_0$, $r_0=b-AA^Tw_0$. From Proposition \ref{syst}, $r_{k-1}(t)$ is a linear combination of eigenvectors of $H$ corresponding to positive eigenvalues. Using this and the assumption $Hr_{k-1}(t) \not =0$,  from Theorem \ref{thm1}, (\ref{coreq1}) we have
\begin{equation}
\Vert r_k(t) \Vert \leq \bigg (\frac{\kappa^+ -1}{\kappa^++1} \bigg )^t\Vert r_{k-1}(t) \Vert.
\end{equation}
Thus, the above inequality can be written for $k-1, k-2, \dots, 1$. Hence, the  proof of (\ref{eqrate2p}) follows.  To prove (\ref{eqrate2p}),  in (\ref{thm3eq1}) of Theorem \ref{thm3} we replace $r$ with $r_{k-1}(t)$, using that $r_{k}(t)=F_t(r_{k-1}(t))$.

\noindent (III):  Analogous to the argument in Theorem \ref{thm2}, to have the right-hand-side of (\ref{eqrate2p}) bounded by $\varepsilon$, it suffices to choose $k$ so that
$\exp \big (-{2kt}/(\kappa^++1) \big) \Vert r_0 \Vert \leq \varepsilon$.

\noindent (IV):  To prove (\ref{eqbounderrr88gg}),  in (\ref{thm32eqgeneral}) of Theorem \ref{thm3} we replace $r$ with $r_{j-1}(t)$, using that $r_{j}(t)=F_t(r_{j-1}(t))$ and repeat this for $j=k, k-1, \dots, 0$.

\noindent (V): Replacing the right-hand-side of (\ref{eqbounderrr88gg})  by a larger summation, we have:
\begin{equation}
\Vert r_k(t) \Vert^2 \leq \Vert r_0 \Vert^2 - c(H) \sum_{j=0}^{k-1}
r_{j}(1)^T H r_{j}(1).
\end{equation}
Thus if $r_{j}(1)^T H r_{j}(1) > \varepsilon$ for each $j=0, \dots, k-1$, we get
$0 \leq \Vert r_0 \Vert^2 - c(H) \times  k \times \varepsilon$, implying the upper bound on $k$ in order to get one of the two inequalities in (\ref{twoineq}) satisfied.
If $F^{\circ i}(r_{j}(1))^T H F^{\circ i}(r_{j}(1)) > \varepsilon$ for all $i =0, \dots, t-1$, $j=0, \dots, k-1$, then from (\ref{eqbounderrr88gg}) we have
\begin{equation} \label{eqbounder2g}
\Vert r_k(t) \Vert^2 \leq \Vert r_0 \Vert^2 -   c(H) \times t \times k \times \varepsilon.
\end{equation}
This implies the bound  (\ref{bbbt}) on $k$ to get at least one of the inequalities in (\ref{tp1ineq}) satisfied.

\noindent (VI): The proof of (\ref{thm4case1gen}) and  (\ref{thmeq4gggnewgen}) follow from the fact that $r_k(t)=b-Ax_k(t)$, proved in (I).  Finally, Proposition \ref{propxx} implies (\ref{thmeq4ggg}).
\end{proof}

\begin{remark}
To summarize the remarkable convergence properties of iterations of $F_t$, when $Ax=b$ is solvable, whether or not $H$ is invertible, Algorithm \ref{3.1} terminates in $O((\kappa^+/t) \ln \varepsilon^{-1})$ iterations with an $\varepsilon$-approximate solution to $Ax=b$. Also, regardless of the solvability of $Ax=b$, in $O(\kappa^+ \lambda_{\max}/\varepsilon)$ iterations, it terminates with an $\sqrt{\varepsilon}$-approximate solution of $A^TAx=A^Tb$. Of course, it could be the case that the algorithm computes within this number of iterations an $\varepsilon$-approximate solution to $Ax=b$. By computing the augmented approximate solution $\widehat x_k(t)$, as defined in the theorem, we improve on the theoretical bound on the number of iterations by a factor of $t$. However, since the normal equation is always solvable, when the reduction in $r_k(t)$ appears to be small over a number of iterations, indicating that the system may be unsolvable,
we can apply the algorithm to solve the normal equation starting with $A^Tr_k(t)$. In fact, in certain cases, such as overdetermined systems, applying CTA directly to the normal equation can be more advantageous.  In summary,  CTA provides a highly robust set of iteration functions for solving arbitrary linear systems.
\end{remark}

The following  generalizes Corollary \ref{corsmall}. Its proof is analogous to the special case, hence omitted.

\begin{corollary} Let $r_k(t)=b-Ax_k(t)$ and $x_k(t)$ be defined as in Theorem \ref{thm4}.

(1) Suppose ${\rm rank}(A)=m$. Then $x_k(t)$ converges to a solution of $Ax=b$. If $H=AA^T$, $x_k(t)$ converges to the minimum-norm solution of $Ax=b$.

(2) For any $t \in \{1, \dots, m\}$, $A^Tr_k(t)$ converges to zero.

(3) Suppose $Ax=b$ is solvable.  If $\{x_k(t)\}$ is bounded, then $r_k(t)$ converges to zero. Moreover, any accumulation point of $x_k(t)$'s is a solution of $Ax=b$.

(4) Suppose $Ax=b$ is solvable, $H=AA^T$.  If  $x_0=A^Tw_0$ for some $w_0 \in \mathbb{R}^m$, then for $k \geq 1$, $x_k(t)=A^Tw_k(t)$ for some $w_k(t) \in \mathbb{R}^m$. If $\{w_k(t)\}$ is bounded, $x_k(t)$ converges to the minimum-norm solution $x_*$. When $t \geq 2$, for all $k \geq 1$, $\widehat x_k(t)=A^T \widehat w_k(t)$ for some $\widehat w_k(t) \in \mathbb{R}^m$. If $\{\widehat w_k(t)\}$ is  bounded, $\widehat x_k(t)$ converges to $x_*$.
\end{corollary}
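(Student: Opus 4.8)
The plan is to repeat the proof of Corollary~\ref{corsmall} essentially line for line, with the substitutions $r_k\mapsto r_k(t)$, $x_k\mapsto x_k(t)$, $w_k\mapsto w_k(t)$, the one-step contraction factor $(\kappa^+-1)/(\kappa^++1)$ replaced by its $t$-th power, and the first-order energy inequality (\ref{eqbounderrr77}) replaced by its high-order analogue established in Theorem~\ref{thm4}(IV). The ingredients used are: the linear-rate bound (\ref{eqrate2p}), valid when $Ax=b$ is solvable; the telescoped energy inequality of Theorem~\ref{thm4}(IV), whose $i=0$ part alone gives $\Vert r_k(t)\Vert^2\le\Vert r_0\Vert^2-c(H)\sum_{j=0}^{k-1}r_j(t)^THr_j(t)$ and in particular $\Vert r_k(t)\Vert\le\Vert r_0\Vert$; the identity $r_k(t)=b-Ax_k(t)$ from (\ref{rkt}); and the observation that, when $H=AA^T$, each increment in (\ref{thmeq4gen}) lies in $\operatorname{range}(A^T)$.

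For part~(1): if $H=A$ and $\operatorname{rank}(A)=m$, then $A$ is a full-rank symmetric PSD $m\times m$ matrix, hence positive definite and invertible, $Ax=b$ has the unique solution $A^{-1}b$, and (\ref{eqrate2p}) gives $r_k(t)\to0$, so $x_k(t)=A^{-1}(b-r_k(t))\to A^{-1}b$. If $H=AA^T$ and $\operatorname{rank}(A)=m$, then $H$ is invertible and $Ax=b$ (equivalently $AA^Tw=b$) is solvable; taking $x_0=A^Tw_0$, induction on (\ref{thmeq4gen}) gives $x_k(t)=A^Tw_k(t)$, and $r_k(t)=b-AA^Tw_k(t)\to0$ with $AA^T$ invertible yields $w_k(t)\to\overline w$, hence $x_k(t)\to\overline x=A^T\overline w$ with $AA^T\overline w=b$, so $\overline x=x_*$ by Proposition~\ref{minnorm}. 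For part~(2): if $r_k(t)^THr_k(t)=0$ for some $k$ then $A^Tr_k(t)=0$ (for $H=AA^T$ this is $\Vert A^Tr_k(t)\Vert^2=0$; for $H=A$ it is $\Vert A^{1/2}r_k(t)\Vert^2=0$, whence $Ar_k(t)=A^{1/2}(A^{1/2}r_k(t))=0$ and $A^Tr_k(t)=0$ by symmetry); otherwise $r_j(t)^THr_j(t)>0$ for all $j$, and letting $k\to\infty$ in the displayed inequality shows $\sum_j r_j(t)^THr_j(t)<\infty$, so its terms tend to $0$, forcing $\Vert A^Tr_k(t)\Vert\to0$ (directly for $H=AA^T$; via $\Vert Ar_k(t)\Vert\le\Vert A\Vert^{1/2}\Vert A^{1/2}r_k(t)\Vert$ for $H=A$).

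For part~(3): $\{r_k(t)\}$ is bounded by $\Vert r_0\Vert$; let $\overline r$ be an accumulation point along an index set $S_1$, pass to $S_2\subseteq S_1$ with $x_k(t)\to\overline x$ along $S_2$ (possible since $\{x_k(t)\}$ is bounded); then $A^Tr_k(t)=A^Tb-A^TAx_k(t)$ forces $A^TA\overline x=A^Tb$ (its left side $\to0$ by part~(2)), so $\overline x$ solves the normal equation, and since $Ax=b$ is solvable the minimum of $\Vert Ax-b\Vert$ is $0$ and is attained at every normal-equation solution, giving $A\overline x=b$ and $\overline r=0$; hence every accumulation point of $\{r_k(t)\}$ is $0$, so $r_k(t)\to0$, and every accumulation point of $\{x_k(t)\}$ solves $Ax=b$. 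For part~(4): induction on (\ref{thmeq4gen}) with $H=AA^T$ gives $x_k(t)=A^Tw_k(t)$; if $\{w_k(t)\}$ is bounded then so is $\{x_k(t)\}$, by part~(3) every accumulation point $\overline x$ solves $Ax=b$, and a sub-subsequence with $w_k(t)\to\overline w$ gives $\overline x=A^T\overline w$, $AA^T\overline w=b$, so $\overline x=x_*$ by Proposition~\ref{minnorm}; boundedness plus uniqueness of the accumulation point give $x_k(t)\to x_*$. For $\widehat x_k(t)$ from (\ref{xxhatxx}) (with $H=AA^T$), Proposition~\ref{propxx} shows the added correction lies in $\operatorname{range}(A^T)$ and, via (\ref{xhathat}), that $A^Tb-A^TA\widehat x_k(t)=A^TF_1^{\circ j}(r_k(t))$; the full double sum of Theorem~\ref{thm4}(IV) forces $F_1^{\circ i}(r_k(t))^THF_1^{\circ i}(r_k(t))\to0$ for each fixed $i$, hence also for the selecting index $j$ (which lives in the finite set $\{1,\dots,t-1\}$), so $A^TA\widehat x_k(t)-A^Tb\to0$; the same extraction-plus-Proposition~\ref{minnorm} argument then gives $\widehat x_k(t)\to x_*$ whenever $\{\widehat w_k(t)\}$ is bounded.

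The first-order argument of Corollary~\ref{corsmall} goes through verbatim except for one point that deserves care: for the plain iterate the single-series tail of the energy inequality suffices, but for the augmented iterate $\widehat x_k(t)$ one must exploit the full double sum of Theorem~\ref{thm4}(IV) to drive $A^TF_1^{\circ j}(r_k(t))\to0$ with the index $j$ possibly varying with $k$ — this is harmless since $j$ ranges over the finite set $\{1,\dots,t-1\}$ — and then invoke Proposition~\ref{propxx} to see that $\widehat x_k(t)$ is an asymptotically exact normal-equation solution, so that (by solvability of $Ax=b$) its accumulation points solve $Ax=b$ and Proposition~\ref{minnorm} identifies the limit as $x_*$.
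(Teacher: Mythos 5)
Your proposal is correct and follows precisely the route the paper intends: the paper explicitly states that the proof of this corollary is ``analogous to the special case, hence omitted,'' referring to Corollary~\ref{corsmall}, and your argument is exactly that analogue carried out with the substitutions $r_k\mapsto r_k(t)$, $x_k\mapsto x_k(t)$, the single-series energy inequality replaced by the double-sum bound (\ref{eqbounderrr88gg}), and Proposition~\ref{propxx} invoked for the augmented iterate $\widehat x_k(t)$. The one point where you genuinely add something beyond a mechanical substitution --- observing that the selecting index $j$ in (\ref{xxhatxx}) may vary with $k$, and that this is harmless because $j$ lives in the finite set $\{1,\dots,t-1\}$ while the full double sum in Theorem~\ref{thm4}(IV) drives every $F_1^{\circ i}(r_k(t))^THF_1^{\circ i}(r_k(t))$ to zero uniformly in $i$ --- is precisely the detail the first-order corollary does not have to face, and flagging it is appropriate.
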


The next theorem characterizes different properties of $F_t$, where the minimal polynomial of $H$ and that of an initial residual $r_0$ with respect to $H$ enter into the analysis.

\begin{theorem}  \label{thm4part2} Consider solving $Ax=b$ or $A^TAx=A^Tb$, where $A$ is an $m \times n$ matrix. Let $H=AA^T$. If $A$ is symmetric PSD, $H$ can be taken to be $A$. Let the  spectral decomposition of $H$ be $U\Lambda U^T$, where $U=[u_1, \dots, u_m]$ is the matrix of orthonormal eigenvectors, and $\Lambda={\rm diag}(\lambda_1, \dots, \lambda_m)$ the diagonal matrix of eigenvalues.  For $t \in \{1, \dots, m\}$, let $F_t(r)$ be as defined in Theorem \ref{thm4}.\\

Given $x_0 \in \mathbb{R}^n$, let $r_0=b-Ax_0 \in \mathbb{R}^m$. Assume $r_0 \not =0$ and  suppose its minimal polynomial with respect to $H$ has degree $s$. Then the following parts hold:

\noindent (1)  $F_s(r_0)=0$ if and only if $Ax=b$ is solvable. Moreover, a corresponding   solution is
\begin{equation} \label{solution}
x_*(s)= \begin{cases}
x_0 + \sum_{i=1}^s \alpha_{t,i}(r_0) H^{i-1} r_0, & \text{if ~} H=A\\
x_0 + \sum_{i=1}^s \alpha_{t,i}(r_0) A^TH^{i-1} r_0, & \text{if~} H=AA^T.
\end{cases}
\end{equation}
Additionally, if $x_0=A^Tw_0$ for some $w_0 \in \mathbb{R}^m$, $H=AA^T$, then $x_*(s)$ is the minimum-norm solution to $Ax=b$.

\noindent (2) $F_s(r_0) \not =0$ and $A^TF_s(r_0)=0$ if and only if $A^TAx=A^Tb$ is solvable and $Ax=b$ is unsolvable. Moreover, a corresponding solution to the normal equation is $x_*(s)$  in (\ref{solution}).

\noindent  (3)  Suppose the degree of the minimal polynomial of $H$ is $s$. Then, given any $x_0 \in \mathbb{R}^n$, setting $r_0=b-Ax_0 \in \mathbb{R}^m$, either $F_s(r_0)=0$ (solving $Ax=b$), or $A^TF_s(r_0)=0$ (solving $A^TAx=A^Tb$) with corresponding solution $x_*(s)$ above. Thus,  one iteration of $F_s$ solves $Ax=b$ or $A^TAx=A^Tb$.

\noindent (4)  Given $r \not =0  \in \mathbb{R}^m$, suppose its minimal polynomial with respect to $H$ has $s \geq 1$ distinct positive roots. Then, for any $t \leq s$ the coefficients of $F_t(r)$ are explicitly defined via the Cramer's rule. Specifically, the coefficient matrix of (\ref{eqcoeff}),
$M_t(r)=(\phi_{i+j}(r))=(r^TH^{i+j}r)$, is invertible so that
\begin{equation} \label{thmeq1}
F_t(r)= r - \sum_{i=1}^t \alpha_{t,i}(r) H^ir, \quad
\alpha_{t,i}(r)= \frac{|M_{t,i}(r)|}{|M_t(r)|}, \quad i=1, \dots, t,
\end{equation}
where $|\cdot|$ denotes determinant and $M_{t,i}(r)$ denotes the matrix that replaces the $i$-th column of $M_t(r)$ with $\beta_t(r)=(\phi_1(r), \dots, \phi_t(r))^T$. Moreover,
\begin{equation} \label{alphatt}
\alpha_{t,t}(r) \not =0.
\end{equation}

\noindent (5)  Given $t \in \{2, \dots, m\}$, for any   $r \in \mathbb{R}^m$,
$\Vert F_t(r) \Vert \leq  \Vert F_{t-1}(r) \Vert$.  If $F_{t-1}(r) \not =0$, then
\begin{equation} \label{eqrate0gg}
\Vert F_t(r) \Vert < \Vert F_{t-1}(r) \Vert.
\end{equation}
Also, for any $t > s$, we have
\begin{equation} \label{fts}
F_t(r)=F_s(r).
\end{equation}

\end{theorem}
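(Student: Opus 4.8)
The plan is to translate every part of the statement into the language of polynomials in $H$ evaluated at $r$. By Proposition \ref{prop1}, if the minimal polynomial of $r$ with respect to $H$ has degree $s$, then $r=\sum_{l=1}^{s}x_l u_l$ with all $x_l\neq 0$ and $u_l$ orthonormal eigenvectors for distinct eigenvalues $\lambda_1,\dots,\lambda_s$ of $H$. Then $F_t(r)=r-\sum_{i=1}^t\alpha_{t,i}(r)H^i r=p_t(H)r=\sum_{l=1}^{s}x_l\,p_t(\lambda_l)\,u_l$, where $p_t(\lambda)=1-\sum_{i=1}^t\alpha_{t,i}(r)\lambda^i$ runs over polynomials of degree at most $t$ with $p_t(0)=1$, and $\|F_t(r)\|^2=\sum_{l=1}^{s}x_l^2 p_t(\lambda_l)^2$ is minimized over all such polynomials. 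Since $x_l\neq 0$ this objective is strictly convex in the vector $(p(\lambda_1),\dots,p(\lambda_s))$, whose attainable set is the affine subspace $S_t=\{(p(\lambda_l))_l:\deg p\le t,\ p(0)=1\}$; hence $F_t(r)$ is well defined (it depends only on the unique minimizing value-vector) even when the minimum-norm coefficient vector is not the only minimizer.

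With this setup, parts (1)--(3) become short. First I would note that $\operatorname{range}(A)=\operatorname{range}(H)$ and $b-r_0=Ax_0\in\operatorname{range}(H)$, so $Ax=b$ is solvable iff $r_0\in\operatorname{range}(H)$, i.e. iff every eigenvalue $\lambda_l$ appearing in $r_0$ is positive (at most one can be zero). If all $\lambda_l>0$, the polynomial $\prod_{l=1}^s(1-\lambda/\lambda_l)$ is feasible for the degree-$s$ problem and annihilates $r_0$, so $F_s(r_0)=0$; conversely $F_s(r_0)=0$ forces $p_s(\lambda_l)=0$ for all $l$, impossible if some $\lambda_l=0$ since $p_s(0)=1$. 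This gives the equivalence in (1); the formula (\ref{solution}) follows exactly as in Theorem \ref{thm4}(I) (multiply $x_*(s)$ by $A$ and use $AA^TH^{i-1}r_0=H^ir_0$, resp. $A\cdot A^{i-1}r_0=H^ir_0$, to get $Ax_*(s)=b-F_s(r_0)$), and the minimum-norm claim is immediate from Proposition \ref{minnorm} once $x_*(s)$ is exhibited as $A^Tw$. For (2), if $Ax=b$ is unsolvable then exactly one eigenvalue, say $\lambda_1$, is $0$; the optimal degree-$s$ polynomial then has $p_s(0)=1$ and $p_s(\lambda_l)=0$ for $l\ge 2$, so $F_s(r_0)=x_1u_1\in\ker H$, giving $F_s(r_0)\neq 0$ and $\|A^TF_s(r_0)\|^2=F_s(r_0)^THF_s(r_0)=0$ automatically; combined with (1) and the fact that $A^TAx=A^Tb$ is always solvable, this yields the biconditional, and $A^TAx_*(s)-A^Tb=-A^TF_s(r_0)=0$. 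Part (3) follows because the degree of the matrix minimal polynomial of $H$ (the number of distinct eigenvalues of $H$) is at least the degree $s'$ of $r_0$'s minimal polynomial; apply (1)/(2) at $s'$ and invoke the stabilization $F_s(r_0)=F_{s'}(r_0)$ from (5).

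For part (4) I would factor the auxiliary matrix: using $\phi_{i+j}(r)=\sum_{l=1}^s x_l^2\lambda_l^{\,i+j}$ one gets $M_t(r)=W^T D\,W$, where $W$ is the $s\times t$ Vandermonde-type matrix with $W_{l,i}=\lambda_l^{\,i-1}$ and $D=\operatorname{diag}(x_l^2\lambda_l^2)$. When all $\lambda_l>0$ and $t\le s$, $D\succ 0$ and $W$ has full column rank, so $M_t(r)\succ 0$ is invertible and Cramer's rule gives (\ref{thmeq1}). For $\alpha_{t,t}(r)\neq 0$, observe that $M_{t,t}(r)$ (the last column replaced by $\beta_t(r)=(\phi_1,\dots,\phi_t)^T$) is, up to a cyclic permutation of its columns (which only flips the sign of the determinant), the Hankel matrix $(\phi_{i+j-1}(r))=W^T\operatorname{diag}(x_l^2\lambda_l)W$, again positive definite for $t\le s$; hence $\det M_{t,t}(r)\neq 0$ and $\alpha_{t,t}(r)=\det M_{t,t}(r)/\det M_t(r)\neq 0$.

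Finally, for part (5): the inequality $\|F_t(r)\|\le\|F_{t-1}(r)\|$ is immediate since $S_{t-1}\subseteq S_t$, and for $t\ge s$ the interpolation conditions $p(0)=1,\ p(\lambda_l)=p_s(\lambda_l)$ already pin down a polynomial of degree $\le s$, so $S_t=S_s$ and $F_t(r)=F_s(r)$, proving (\ref{fts}). The hard part will be the strict decrease: if $\|F_t(r)\|=\|F_{t-1}(r)\|$ and $F_{t-1}(r)\neq 0$, then $p_{t-1}$ is already optimal for the degree-$t$ problem, and — as long as $t$ does not exceed the number of distinct positive eigenvalues of $r$ — any two admissible degree-$\le t$ polynomials with the same values at all $\lambda_l$ differ by a multiple of $\lambda\prod_l(\lambda-\lambda_l)$, of degree $>t$, hence coincide; so $p_t=p_{t-1}$ and $\alpha_{t,t}(r)=0$, contradicting part (4). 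Thus the strict inequality holds up to the index at which the orbit stabilizes (at $F_{t-1}(r)=0$ when $Ax=b$ is consistent, and at a nonzero vector in $\ker H$ otherwise). Keeping this bookkeeping straight — distinguishing the degree of $r$'s minimal polynomial, the number of its positive roots, and the iteration order $t$, together with the attendant uniqueness or non-uniqueness of the optimal polynomial — is the only genuinely delicate point; parts (1)--(4) are then routine consequences of the polynomial reformulation.
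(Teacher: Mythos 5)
Your proposal is correct, and it takes a genuinely different route from the paper that is in several respects more transparent. The paper proves the converse direction of part (1) by a contradiction argument: after ruling out the cases where some quantity $r^THr$ or $F_j(r_0)^THF_j(r_0)$ vanishes, it shows $\Vert F_s(F_s(r_0))\Vert < \Vert F_s(r_0)\Vert$ via Theorem \ref{thm3} and then argues that this strict inequality is impossible because $F_s(F_s(r_0))$ is again expressible as $r_0-\sum_{i\le s}\beta_iH^ir_0$. Your polynomial reformulation bypasses all of that: when all $\lambda_l>0$ the polynomial $\prod_l(1-\lambda/\lambda_l)$ is admissible and annihilates $r_0$, forcing $F_s(r_0)=0$; when some $\lambda_l=0$ the constraint $p(0)=1$ makes annihilation impossible. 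This is the standard Krylov viewpoint made explicit and is shorter and more conceptual. For part (4) you use the same $W^T D W$ Vandermonde factorization of $M_t(r)$ that the paper uses, but your handling of $\alpha_{t,t}\ne 0$ is cleaner: you observe that $M_{t,t}(r)$ is a cyclic column-permutation of the shifted Hankel matrix $(\phi_{i+j-1}(r))=W^T\operatorname{diag}(x_l^2\lambda_l)W$, which is positive definite for $t\le s$, whence $\det M_{t,t}\ne 0$; the paper instead proves $M_{t,t}=VDW'$ invertible via a clever but less illuminating bilinear argument with a permuted test vector.

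There is one place where you actually catch an imprecision in the paper. The paper proves the strict decrease in part (5) by the midpoint argument: if $\Vert F_t(r)\Vert=\Vert F_{t-1}(r)\Vert$ then $\Vert\frac12F_{t-1}(r)+\frac12F_t(r)\Vert<\Vert F_t(r)\Vert$. But this "trivial fact" requires $F_{t-1}(r)\ne F_t(r)$; if they are equal the midpoint argument gives nothing. That degenerate case does occur --- it always occurs for $t>s$ by (\ref{fts}), and it can occur earlier when $M_t(r)$ becomes singular (e.g.\ $H=\operatorname{diag}(1,0)$, $r=(1,1)^T$ gives $F_2(r)=F_1(r)=(0,1)^T\ne 0$). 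So as literally stated, the strict inequality (\ref{eqrate0gg}) does not hold for all $t$ with $F_{t-1}(r)\ne 0$; it holds while $t$ does not exceed the number of distinct positive eigenvalues present in $r$. Your uniqueness-of-polynomial argument --- equal norms force $p_t=p_{t-1}$ hence $\alpha_{t,t}=0$, contradicting (4) in the regime where (4) applies --- identifies exactly this qualifier. So your version of (5) is the corrected statement, and your argument for it is valid; the paper's proof of (5) as written has a gap that your proposal repairs. The remaining parts (2), (3), and the $F_t=F_s$ stabilization, are in both treatments essentially immediate once (1) and the Vandermonde structure are in place.
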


\begin{proof}

\noindent (1):  Suppose $F_s(r_0)=0$.  Multiplying $x_*(s)$ in (\ref{solution}) by $A$, from the definition of $H$ and $F_t(r)$, see (\ref{thmeq1}),  the product of $A$ and the summation term gives $r_0-F_t(r_0)$.  Thus, we get the following, proving $x_*(s)$ is a solution to $Ax=b$:
\begin{equation}  \label{axseqzgen}
Ax_*(s)= Ax_0+r_0-F_s(r_0)= Ax_0+b -Ax_0=b.
\end{equation}
Conversely, suppose $Ax=b$ is solvable.

If $H$ is invertible, by Proposition \ref{prop1} the constant term of the minimal polynomial of $r_0$ with respect to $H$, namely $\mu_{H,r_0}(y)$, is nonzero. But this implies $F_s(r_0)=0$.  Thus, if $F_s(r_0) \not =0$, $H$ must be singular.

If $r_0^THr_0=0$, whether or not  $H=AA^T$ or $H=A$, $A^Tr_0=0$. But this implies $A^TAx_0=A^Tb$ so that $x_0$ is a solution to the normal equation. But since we assumed $Ax=b$ is solvable, $x_0$ also satisfies $Ax_0=b$, implying $r_0=0$, a contradiction. Hence $F_s(r_0)=0$.

If $F_1^T(r_0)HF_1(r_0)=0$, it follows that $A^TF_1(r_0)=0$. This implies $x_*(1)$ is a solution to the normal equation and hence a solution to $Ax=b$. Thus again $F_s(r_0)=0$. Similarly, if $F_t^T(r_0)HF_t(r_0)=0$ for some $t \in \{2, \dots, t-1\}$, then $A^TF_t(r_0)=0$. This implies $x_*(s)$ is a solution to the normal equation and hence a solution to $Ax=b$.

We have thus proved if $F_s(r_0) \not=0$, $r_0^THr_0>0$ and for all $t \in \{1, \dots, s-1 \}$, $F_t^T(r_0)HF_t(r_0) >0$.  Hence by  Theorem \ref{thm3}, (\ref{thm32eqgeneral}), substituting $F_s(r_0)$ for $r$, we conclude
\begin{equation}  \label{contradictgen}
\Vert F_s(F_s(r_0)) \Vert < \Vert F_s(r_0) \Vert.
\end{equation}
We prove strict inequality is not possible in (\ref{contradictgen}). Since the minimal polynomial of $r_0$ with respect to $H$ has degree $s \geq 1$, there exists a set of constants $a_0, \dots, a_s \not =0$, such that:
$$a_0r_0+ \cdots + a_sH^sr_0=0.$$
It follows that for $i >s$, $H^{i}r_0$ can be written as a linear combination of $r_0, Hr_0, \dots, H^sr_0$ so that for some set of constant $\beta_{s,1}, \dots, \beta_{s,s}$,
$$F_s(F_s(r_0))=r_0- \beta_{t,1}Hr_0 - \cdots - \beta_{t,t}H^sr_0.$$
But the above equation and definition of $F_t(r_0)$ contradict the strict inequality (\ref{contradictgen}). Thus $F_s(r_0)=0$.

To prove the last part of (1), if $x_0=A^Tw_0$ for some $w_0 \in \mathbb{R}^m$, where $H=AA^T$, then from the formula for $x_*(s)$, $x_*(s)=A^Tw_*(s)$, for some $w_*(s) \in \mathbb{R}^m$. Then, by Proposition (\ref{minnorm}), $x_*(s)$ is the minimum-norm solution.

\noindent (2):  Suppose $F_s(r_0) \not =0$ and $A^TF_s(r_0)=0$.  Then, from $A^TF_s(r_0)=0$, it is straightforward to show $A^TAx_*(s)=A^Tb$. Also, $Ax=b$ is not solvable since otherwise we must have $Ax_*(s)=b$,  a contradiction by part (1).

Conversely, suppose $A^TAx=A^Tb$ is solvable and $Ax=b$ is not solvable.  Then, from (1) it follows that $F_s(r_0) \not =0$.
We prove $A^TF_s(r_0)=0$. Since $Ax=b$ is unsolvable, $H$ must be singular. As in the case of (1), we claim
${F^{\circ j}_1(r_0)}^THF^{\circ j}_1(r_0) >0$ for $j=0, \dots, s-1$.  Otherwise, if
${F^{\circ j}_1(r_0)}^THF^{\circ j}_1(r_0) =0$ for some $j=0, \dots, s-1$, then  $A^TF^{\circ j}_1(r_0)=0$. This implies $x_*(s)$ is a solution to the normal equation.  Then from  Theorem \ref{thm3}, (\ref{thm32eqgeneral}), it follows that
\begin{equation}  \label{contradict1}
\Vert F_s(F_s(r_0)) \Vert < \Vert F_s(r_0) \Vert.
\end{equation}
But as in proof of case (1), this contradicts the definition of $F_s(r_0)$ so that we must have $A^TF_s(r_0)=0$.

\noindent (3):  If the degree of the minimal polynomial of $H$ is $s$,
then for any nonzero $r$, its minimal polynomial with respect to $H$ must have degree $s' \leq s$.  Thus, the proof follows from (1) and (2).

\noindent (4): From Proposition \ref{prop1}, it follows that if the minimal polynomial of $r$ with respect to $H$ has $s \geq 1$ positive roots, then either $r$ is a linear combination of $s$ eigenvectors with positive eigenvalues, or a linear combination of $s+1$ eigenvector, where one of the eigenvectors must corresponds to zero eigenvalue. Thus, we may assume
$r= u_0+ \sum_{i=1}^s x_i u_i$, where  either $u_0=0$ or $Hu_0=0$, and
the eigenvalues corresponding to the eigenvectors $u_1, \dots, u_s$ are positive. Thus, $x_i \not =0$ for $i=1, \dots, s$.  Without loss of generality, assume
$0 < \lambda_1 < \cdots < \lambda_m$ and $u_0=0$.  Now, it is easy to verify that the coefficient matrix $M_t(r)$ in the auxiliary equation can be written as:

\begin{equation} \label{vand1}
M_t(r)=VDV^T=
\begin{pmatrix}
\lambda_1&\lambda_2& \ldots&\lambda_s\\
\lambda_1^2&\lambda_2^2& \ldots&\lambda_s^2\\
\vdots&\vdots&\ddots&\vdots\\
\lambda_1^t&\lambda_2^t& \ldots&\lambda_s^t\\
\end{pmatrix}
\begin{pmatrix}
x_1^2&0& \ldots&0\\
0&x_2^2& \ldots&0\\
\vdots&\vdots&\ddots&\vdots\\
0&0& \ldots&x_s^2\\
\end{pmatrix}
\begin{pmatrix}
\lambda_1&\lambda_1^2& \ldots& \lambda_1^t\\
\lambda_2&\lambda_2^2& \ldots& \lambda_1^t\\
\vdots&\vdots&\ddots&\vdots\\
\lambda_s&\lambda_s^2& \ldots&\lambda_1^t\\
\end{pmatrix}.
\end{equation}

Thus, $V$ is $t \times s$ and $D={\rm diag}(x_1^2, \dots, x^2_s)$.
Suppose $M_t(r)$ is not invertible.  Then, there exists $y=(y_1, \dots, y_t)^T \not = 0$ such that $M_t(r)y=0$. We claim $V^T y \not =0$. Otherwise, we get
\begin{equation}
y_1 \lambda_i+ \cdots + y_t \lambda_i^t=0, \quad i=1, \dots, s.
\end{equation}
Since $\lambda_i >0$ for all $i=1, \dots, s$, dividing by $\lambda_i$  gives
\begin{equation}
y_1+ \cdots + y_t \lambda_i^{t-1}=0, \quad i=1, \dots, s.
\end{equation}
But the above implies $\lambda_i, i=1, \dots, s$ are all distinct roots of a polynomial of degree at most $t-1$. Since $s \geq t$, this proves the claim that $w=V^Ty \not =0$. This implies $w^TD^{1/2}w = y^TVDV^Ty=y^TM_t(r)y >0$, contradicting that $M_t(r)y=0$.  This proves $M_t(r)$ is invertible.

Next, we consider $\alpha_{t,t}$. Let $W$ be the matrix $V^T$ with its last column replaced with
$(1,1, \dots, 1)^T \in \mathbb{R}^s$, i.e.,
\begin{equation}
W=
\begin{pmatrix}
\lambda_1&\lambda_1^2& \ldots&\lambda_1^{t-1}&1\\
\lambda_2&\lambda_2^2& \ldots&\lambda_2^{t-1}&1\\
\vdots&\vdots&\ddots&\vdots\\
\lambda_s&\lambda_s^2& \ldots&\lambda_s^{t-1}&1\\
\end{pmatrix}.
\end{equation}
By definition, $M_{t,t}(r)$ is the matrix $M_t(r)$ with its $t$-th column replaced with $\beta_t(r)=(\phi_1(r), \cdots, \phi_t(r))^T$. Note that
\begin{equation}
\phi_j(r)=r^TH^jr=\big (\sum_{i=1}^s x_iu_i \big )^T H^j \big ( \sum_{i=1}^s x_iu_i \big )=\sum_{i=1}^s \lambda_i^j x_i^2.
\end{equation}
From the above it is easy to verify that
\begin{equation}
M_{t,t}(r) = V D W.
\end{equation}
By the Cramer's rule,
\begin{equation}
\alpha_{t,t}(r)= \frac{|M_{t,t}(r)|}{|M_t(r)|}.
\end{equation}
Thus, to prove (\ref{alphatt}) that $\alpha_{t,t}(r) \not =0$, is suffices to show $M_{t,t}(r)$ is invertible. We have,
\begin{equation}
VD= V_0 D_0,
\end{equation}
where $D_0={\rm diag}(\lambda_1 x_1^2, \dots, \lambda_s x_s^2)$ and $V_0$ is the $s \times t$ Vandermonde matrix:
\begin{equation}
V_0=
\begin{pmatrix}
1&1& \ldots&1\\
\lambda_1&\lambda_2& \ldots&\lambda_s\\
\vdots&\vdots&\ddots&\vdots\\
\lambda_1^{t-1}&\lambda_2^{t-1}& \ldots&\lambda_s^{t-1}\\
\end{pmatrix}.
\end{equation}
Given $y=(y_1, y_2, \dots, y_{t-1}, y_t)^T \not =0$,  permuting the coordinates by one, let $\overline y=(y_t, y_1, y_2, \dots, y_{t-1})^T$, also a nonzero vector.
It is straightforward to verify that $W y=V_0^T \overline y$.  Thus we have,
\begin{equation} \label{ttta}
 \overline y^T M_{t,t}(r)y= \overline y^T V_0 D_0 W y= \overline y^T V_0 D_0 V^T_0 \overline y= \Vert D_0^{1/2} V_0^T \overline y \Vert^2.
\end{equation}
We claim $V_0^T \overline y \not = 0$.  Then, assuming the correctness of the claim and since $D_0$ is positive definite, (\ref{ttta}) would imply
$M_{t,t}(r) y \not =0$ for any nonzero $y$, hence proving $M_{t,t}(r)$ must be invertible.  To prove the claim, suppose $V_0^T \overline y = 0$. Then, the polynomial $y_t+ y_1 \lambda + \cdots + y_{t-1} \lambda^{t-1}$ has $s \geq t$ distinct roots, a contradiction. This completes the proof of (4).

\noindent (5): The inequality $\Vert F_t(r) \Vert \leq \Vert F_{t-1}(r) \Vert$ follows from the definition, as  $F_t(r)$ optimizes the norm over a larger domain than $F_{t-1}(r)$ does.  To prove (\ref{eqrate0gg}), assume $F_{t-1}(r) = r - \sum_{i=1}^{t-1} \alpha_{t-1,i}(r) H^i r$ is nonzero. Suppose
\begin{equation} \label{eqfs}
\Vert F_{t-1}(r) \Vert = \Vert F_{t}(r) \Vert  = \Vert  r - \sum_{i=1}^{t} \alpha_{t,i}(r) H^i r \Vert.
\end{equation}
It is a trivial fact that if two nonzero vectors, $u$ and $v$, have the same length, their midpoint $(u+v)/2$ has a length smaller than their common length. Thus,  if (\ref{eqfs}) holds,
\begin{equation}
\Vert \frac{1}{2}F_{t-1}(r)+ \frac{1}{2}F_{t}(r) \Vert =
\Vert r - \sum_{i=1}^{t-1} \frac{1}{2} (\alpha_{t-1,i}+ \alpha_{t,i})H^i r  + \frac{1}{2} \alpha_{t,t})H^t r \Vert < \Vert F_t(r) \Vert,
\end{equation}
contradicting the definition of $F_t(r)$. Hence the proof of (\ref{eqrate0gg}).

To prove (\ref{fts}), since the minimal polynomial of $r$ with respect to $H$ has $s$ positive roots, this means either
$\mu_{\!_{H,r}}(Hr)=a_0 r+a_1Hr+ \dots + a_{d-1} H^{s-1}r+ H^sr=0$,
and $a_0 \not =0$, or
$\mu_{\!_{H,r}}(Hr)=a_1Hr+ \dots + a_{d-1} H^{s}r+ H^{s+1}r=0$.
If follows that for all $i$ satisfying $s < i \leq t$, $H^ir$ can be expressed in terms of $Hr, \dots, H^sr$. But this implies $F_t(r)=F_s(r)$.
\end{proof}

The following two theorems give an overview of the convergence properties of the orbits of CTA family.

\begin{theorem}
If $Ax=b$ is solvable, $O^+_t(r_0)$ converge to the zero vector and any accumulation point of the sequence of $x_k(t)$'s is a solution. If $Ax=b$ is not solvable, $O^+_t(r_0, A^T)$  converge to zero and any accumulation point of the sequence of $x_k(t)$'s is a solution to $A^TAx=A^Tb$. \qed
\end{theorem}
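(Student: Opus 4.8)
The statement is essentially a packaging of the quantitative estimates already proved for $F_t$, so the plan is to read it off from Theorem \ref{thm4}, Theorem \ref{thm3}, and the Corollary following Theorem \ref{thm4}, after first disposing of the degenerate possibility that the orbit is finite. If $Hr_j(t)=0$ for some $j$, then in the solvable case $r_j(t)=b-Ax_j(t)$ lies in $\mathrm{range}(A)$ while also lying in $\ker H$; since $\mathrm{range}(H)\cap\ker H=\{0\}$ for a symmetric PSD $H$ (and $\mathrm{range}(H)=\mathrm{range}(A)$ whether $H=AA^T$ or $H=A$), we get $r_j(t)=0$, i.e.\ $x_j(t)$ already solves $Ax=b$. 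In the unsolvable case $Hr_j(t)=0$ gives $A^Tr_j(t)=0$, so $x_j(t)$ solves the normal equation, and in both situations the relevant sequences stabilize and the conclusions are immediate. Hence I may assume $Hr_j(t)\neq 0$ for all $j$, which is precisely the standing hypothesis under which Theorem \ref{thm4} is stated.

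For the solvable case, Theorem \ref{thm4}(II) gives $\Vert r_k(t)\Vert\le\big(\tfrac{\kappa^+-1}{\kappa^++1}\big)^{tk}\Vert r_0\Vert$, and since $H\neq 0$ forces $1\le\kappa^+<\infty$, the base lies in $[0,1)$, so $O^+_t(r_0)=\{r_k(t)\}$ converges to the zero vector. For the accumulation points, combine this with $r_k(t)=b-Ax_k(t)$ from Theorem \ref{thm4}(I): if $x_{k_j}(t)\to\bar x$ along a subsequence, then $b-A\bar x=\lim_j\big(b-Ax_{k_j}(t)\big)=\lim_j r_{k_j}(t)=0$, so $A\bar x=b$. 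Note that this needs no a priori boundedness of $\{x_k(t)\}$; it merely uses that an accumulation point exists.

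For the unsolvable case, the key point is that $A^Tr_k(t)\to 0$, which is exactly part (2) of the Corollary following Theorem \ref{thm4}. It is obtained by telescoping the decay inequality underlying Theorem \ref{thm3}: whenever $r_\ell(t)^THr_\ell(t)>0$ one has $\Vert r_{\ell+1}(t)\Vert^2=\Vert F_t(r_\ell(t))\Vert^2\le\Vert F_1(r_\ell(t))\Vert^2\le\Vert r_\ell(t)\Vert^2-c(H)\,r_\ell(t)^THr_\ell(t)$ with $c(H)>0$, so $\sum_{\ell}r_\ell(t)^THr_\ell(t)\le\Vert r_0\Vert^2/c(H)<\infty$, forcing $r_\ell(t)^THr_\ell(t)\to 0$; since $r^THr=\Vert A^Tr\Vert^2$ when $H=AA^T$, and $r^THr=\Vert A^{1/2}r\Vert^2\ge\Vert A^Tr\Vert^2/\lambda_{\max}(A)$ when $H=A$ (using $A$ symmetric), we conclude $A^Tr_\ell(t)\to 0$, i.e.\ $O^+_t(r_0,A^T)\to 0$. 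Finally, applying $A^T$ to $r_k(t)=b-Ax_k(t)$ gives $A^Tr_k(t)=A^Tb-A^TAx_k(t)$, so along any subsequence $x_{k_j}(t)\to\bar x$ the left side tends to $0$ and the right side to $A^Tb-A^TA\bar x$, whence $A^TA\bar x=A^Tb$. The only genuinely delicate ingredient is the implication $r_\ell(t)^THr_\ell(t)\to 0\Rightarrow A^Tr_\ell(t)\to 0$ in the singular, unsolvable case: the geometric rate of Theorem \ref{thm4}(II) is unavailable there, so one must rely on the summability built into Theorem \ref{thm3}(ii) (equivalently Lemma \ref{lem1}(iii)); everything else is routine.
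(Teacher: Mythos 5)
Your proof is correct and follows essentially the same route the paper intends: read off residual convergence in the solvable case from the geometric decay in Theorem~\ref{thm4}(II), read off $A^Tr_k(t)\to 0$ in the general case from the summability baked into Theorem~\ref{thm3}(ii) (i.e., the argument already used for part~(2) of the Corollary following Theorem~\ref{thm4}), and then pass to accumulation points via the identity $r_k(t)=b-Ax_k(t)$ from Theorem~\ref{thm4}(I). The one place where you add something the paper leaves implicit is the disposal of the degenerate case $Hr_j(t)=0$: since Theorem~\ref{thm4} is stated only under the standing hypothesis $Hr_j(t)\neq 0$, your observation that $Hr_j(t)=0$ in the solvable case forces $r_j(t)=0$ (via $r_j(t)\in\mathrm{range}(A)=\mathrm{range}(H)$ and $\mathrm{range}(H)\cap\ker H=\{0\}$), and in the unsolvable case immediately yields $A^Tr_j(t)=0$, is a genuine and welcome tightening that makes the theorem unconditional rather than contingent on the orbit being infinite.
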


\begin{theorem}
If $Ax=b$ is solvable, the finite sequence $O^*(r_0)$ converge to the zero vector.
If $Ax=b$ is not solvable, $O^*(r_0,A^T)$ converge to zero.
Moreover, if the degree of the minimal polynomial of $r_0$ with respect to $H$ is $s$, and for any $ j \leq m$ we define
\begin{equation} \label{solutionzzz}
x_*(j)= \begin{cases}
x_0 + \sum_{i=1}^j \alpha_{t,i}(r_0) H^{i-1} r_0, & \text{if ~} H=A\\
x_0 + \sum_{i=1}^j \alpha_{t,i}(r_0) A^TH^{i-1} r_0, & \text{if~} H=AA^T,
\end{cases}
\end{equation}
then $x_*(s)$ is either a solution to $Ax=b$, or a solution to $A^TAx=A^Tb$, but for $j <s$, $x_*(j)$ is neither a solution to $Ax=b$ nor a solution to $A^TAx=A^Tb$.  \qed
\end{theorem}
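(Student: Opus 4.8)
The plan is to reduce the whole statement to Theorem~\ref{thm4part2}, together with the standing fact (Farkas) that the normal equation $A^TAx=A^Tb$ is always consistent, and the elementary identities relating $x_*(j)$ to $F_j(r_0)$. Nothing genuinely new needs to be built; the work is in assembling the right pieces and in one sharp degree count.

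For the two convergence assertions, suppose first that $Ax=b$ is solvable. Then Theorem~\ref{thm4part2}(1) gives $F_s(r_0)=0$, and Theorem~\ref{thm4part2}(5) gives both that $\Vert F_t(r_0)\Vert$ is non-increasing in $t$ and that $F_t(r_0)=F_s(r_0)=0$ for every $t>s$; hence the finite sequence $O^*(r_0)=\{F_t(r_0)\}_{t=1}^m$ is monotone in norm and identically zero from index $s$ on, which is the intended meaning of ``converges to the zero vector.'' If instead $Ax=b$ is not solvable, then since $A^TAx=A^Tb$ is consistent while $Ax=b$ is not, Theorem~\ref{thm4part2}(2) applies and yields $A^TF_s(r_0)=0$ (with $F_s(r_0)\neq 0$); applying Theorem~\ref{thm4part2}(5) once more, $A^TF_t(r_0)=A^TF_s(r_0)=0$ for all $t\ge s$, so the least-squares residual point-wise orbit $O^*(r_0,A^T)$ is identically zero from index $s$ on, i.e.\ converges to zero.

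For the statement about $x_*(j)$, I would first record the algebraic identity obtained by left-multiplying the definition of $x_*(j)$ in (\ref{solutionzzz}) by $A$, using $H=A$ or $H=AA^T$ as appropriate together with $r_0=b-Ax_0$ and $F_j(r)=r-\sum_{i=1}^j\alpha_{j,i}(r)H^ir$ (the coefficients appearing in (\ref{solutionzzz}) being exactly those of $F_j$):
\[
Ax_*(j)=b-F_j(r_0),\qquad A^TAx_*(j)=A^Tb-A^TF_j(r_0).
\]
Thus $x_*(j)$ solves $Ax=b$ iff $F_j(r_0)=0$, and it solves the normal equation iff $A^TF_j(r_0)=0$. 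The first half of the claim is then immediate: if $Ax=b$ is solvable, $F_s(r_0)=0$ by Theorem~\ref{thm4part2}(1), so $x_*(s)$ solves $Ax=b$; otherwise $A^TF_s(r_0)=0$ by Theorem~\ref{thm4part2}(2), so $x_*(s)$ solves the normal equation. For the minimality half, by the identity it suffices to show, for each $j<s$, that $F_j(r_0)\neq 0$ and $A^TF_j(r_0)\neq 0$. Here I would use Proposition~\ref{prop1}: write $r_0=\sum_{i=1}^s x_iu_i$ with $x_i\neq 0$ and $s$ distinct eigenvalues $\lambda_1,\dots,\lambda_s$, so that $F_j(r_0)=\sum_{i=1}^s x_i\bigl(1-q(\lambda_i)\bigr)u_i$ for the optimal polynomial $q$ of degree $\le j$ with $q(0)=0$, and note that $A^TF_j(r_0)=0$ is equivalent to $HF_j(r_0)=0$. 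Then $F_j(r_0)=0$ forces $q\equiv 1$ at all $s$ nodes $\lambda_i$, contradicting the degree bound $j<s$ and $q(0)=0$; and $A^TF_j(r_0)=0$ forces $q=1$ at every positive $\lambda_i$, which one rules out by the same Vandermonde argument used in the proof of Theorem~\ref{thm4part2}(4). Finally, $F_t(r)=F_s(r)$ for $t>s$ (Theorem~\ref{thm4part2}(5)) confirms nothing further changes past index $s$.

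The main obstacle is precisely this last, sharp step -- showing that $j<s$ is genuinely too small. When $Ax=b$ is solvable all of $\lambda_1,\dots,\lambda_s$ are positive and the interpolation/degree count is comfortably over-determined for every $j<s$. When $Ax=b$ is inconsistent, exactly one $\lambda_i$ equals $0$, so only $s-1$ eigenvalues are positive, and at the boundary index $j=s-1$ the count becomes exact rather than strict; handling this case -- tracking precisely how the constraint $q(0)=0$ interacts with interpolation at the remaining $s-1$ positive nodes, and keeping in mind that there $F_{s-1}(r_0)=F_s(r_0)$ already lies in the nullspace of $H$ -- is the delicate part of the argument and has to be carried out with the exact Vandermonde analysis rather than with a counting slack.
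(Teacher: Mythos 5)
Your reduction of the theorem to Theorem~\ref{thm4part2} is the right framework, and most of it is sound. The two convergence claims follow exactly as you describe: Theorem~\ref{thm4part2}(1) (resp.\ (2)) gives $F_s(r_0)=0$ (resp.\ $A^TF_s(r_0)=0$, $F_s(r_0)\neq 0$), and part (5) propagates this to all $t>s$, so the point-wise orbit (resp.\ least-squares point-wise orbit) is eventually constantly zero. Your identity $Ax_*(j)=b-F_j(r_0)$ is also correct and is the key translation: $x_*(j)$ solves $Ax=b$ iff $F_j(r_0)=0$, and solves the normal equation iff $A^TF_j(r_0)=0$, reducing the last clause to ``$F_j(r_0)\neq 0$ and $A^TF_j(r_0)\neq 0$ for every $j<s$.'' The first of these two non-vanishing claims you prove correctly (the component of $r_0$ in $\ker H$, if present, is untouched by $F_j$; if absent, a polynomial of degree $\le j<s$ cannot interpolate $1$ at $s$ distinct positive nodes while vanishing at the origin).

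The gap is the second non-vanishing claim, and it is not a gap that the ``exact Vandermonde analysis'' will close. You correctly flag that in the inconsistent case $j=s-1$ is borderline, and you even note that there $F_{s-1}(r_0)=F_s(r_0)$ already lies in $\ker H$. But carry that observation one step further: if $F_{s-1}(r_0)\in\ker H$ then $A^TF_{s-1}(r_0)=0$, so $x_*(s-1)$ already solves the normal equation, and $s-1<s$. That is precisely what your own interpolation count says: write $r_0=x_1u_1+\sum_{i=2}^s x_iu_i$ with $\lambda_1=0$ and $\lambda_2,\dots,\lambda_s>0$ distinct. The vectors $Hr_0,\dots,H^{s-1}r_0$ are $s-1$ Vandermonde-independent vectors in the $(s-1)$-dimensional space $\mathrm{span}\{u_2,\dots,u_s\}$, hence span it; the norm-minimizing $F_{s-1}(r_0)$ is therefore exactly the residual's projection onto $\ker H$, namely $x_1u_1$, which $A^T$ annihilates. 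Concretely, take $A=(1,0)^T$, $b=(1,1)^T$, $x_0=0$: then $r_0=(1,1)^T$, $s=2$, and $F_1(r_0)=(0,1)^T$ with $A^TF_1(r_0)=0$, so $x_*(1)=1$ already solves $A^TAx=A^Tb$ while $1<s=2$. So the minimality clause ``for $j<s$, $x_*(j)$ is neither a solution'' is false when $Ax=b$ is inconsistent; the correct index is $s'=s-1$ there (equivalently, the number of distinct \emph{positive} eigenvalues carried by $r_0$), which is what Theorem~\ref{thm4part2}(4) is really tracking. Your proof cannot be completed as written because the statement you are asked to establish fails at $j=s-1$; a corrected proof would replace $s$ in the minimality clause by the number of distinct positive eigenvalues of $H$ in the support of $r_0$.
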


\begin{remark} \label{minpolyfactor}
Given a symmetric positive definite matrix $H$, to compute a factor of its minimal polynomial, we can choose an arbitrary nonzero $b$, and an arbitrary $x_0$. Then, we set $r_0=b-Hx_0$ and attempt to solve the equation $Hx=b$ using the point-wise orbit method. The minimal polynomial of $r_0$ with respect to $H$, which is a factor of the minimal polynomial of $H$, can be computed by finding the smallest $j$ such that either $Ax_*(j)=b$ or $A^TAx_*(j)=A^Tb$. If this does not yield the minimal polynomial of $H$, we can obtain new factors by repeating the above steps with different choices of $r_0$. If $H$ is positive semidefinite, we can adjust $b$ so that $Hx=b$ is solvable and then choose arbitrary $x_0$.
\end{remark}

\subsection{Algorithms Based on Orbit and Point-wise Orbit of CTA} \label{sec4.4}

In this subsection we describe two algorithms for solving a linear system. Given a fixed $t \in \{1, \dots, m\}$, Algorithm \ref{4.1} is based on computing iterates of $F_t$.  To describe the iterative step of the algorithm, given the current residual $r'=b-Ax'$, the algorithm generates new residual $r''(t)$ and approximate solution $x''(t)$, satisfying $r''(t)=b-Ax''(t)$, defined below. Description of these require computing $\alpha_t(r)$, the solution of the auxiliary system,
$M_t(r) \alpha = \beta_t(r)$. The auxiliary system always has a solution. In case it has multiple solutions, we select the minimum-norm solution.  Now set
\begin{equation}  \label{CTAxnewt}
r''(t) = F_t(r')= r'- \sum_{i=1}^t \alpha_{t,i}(r') H^ir', \quad
x''(t)= x'+\begin{cases}
\sum_{i=1}^t \alpha_{t,i}(r') H^{i-1}r', & \text{if ~} H=A\\
\sum_{i=1}^t \alpha_{t,i}(r') A^T H^{i-1}r', & \text{if~} H=AA^T.
\end{cases}
\end{equation}
If $\Vert r''(t) \Vert \leq \varepsilon$ or $\Vert {r''(t)}^T H r''(t) \Vert \leq  \varepsilon$, the algorithm terminates with approximate solution $x''(t)$ defined above. Otherwise,  $\Vert r''(t) \Vert > \varepsilon$ and $\Vert {r''(t)}^T H r''(t) \Vert > \varepsilon$. If it exists, the algorithm selects the smallest index $j \in \{1, \dots, t-1 \}$ such that $\Vert F_1^{\circ j}(r''(t))^TH F_1^{\circ j}(r''(t)) \Vert \leq \varepsilon$. Then, it stops with the approximate solution as $\widehat x''_j(t)$, defined below (as before $\alpha_{1,1}(r)=r^THr/r^TH^2r$):
\begin{equation} \label{widehatxx}
\widehat x''_j(t)= x''(t) +\begin{cases}
\sum_{i=0}^{j-1} \alpha_{1,1} \big (F_1^{\circ i}(r') \big ) F_1^{\circ i}(r'), & \text{if ~}
H=A\\
\sum_{i=0}^{j-1} \alpha_{1,1} \big (F_1^{\circ i}(r') \big) A^T F_1^{\circ i}(r'), & \text{if~} H=AA^T
\end{cases}.
\end{equation}
It the algorithm has not terminated, it replaces $r'$ with $r''(t)$, $x'$ with $x''(t)$, and repeats.

Algorithm \ref{4.1} is fully described for any general matrix $A$ and any $t \leq m$. If it is known $H$ is invertible, whether $H=A$ or $H=AA^T$, the while loop of Algorithm \ref{4.1} only needs the clause  $\Vert r'  \Vert  > \varepsilon$, however as suggested for Algorithm \ref{3.1}, if the algorithm stops prematurely, we reduce the value of $\varepsilon$ for that clause and keep running the algorithm. Thus, Algorithm \ref{4.1} as is, regardless of any information on invertibility $H$ or solvability of $Ax=b$ is essentially complete.

In practice there is no need to start with a large $t$. One strategy is as follows: Given a residual $r$, we compute $F_1(r)$ and if this does not sufficiently reduce $\Vert r \Vert$, we compute $F_2(F_1(r))$ and if needed compute $F_3(F_2(F_1(r)))$ up to a small $t$, then we cycle.  Many other strategies are possible.

\begin{algorithm}[!htb]
\scriptsize
\SetAlgoNoLine
\KwIn{$A \in \mathbb{R}^{m \times n}$, $b \in \mathbb{R}^m$, $\varepsilon \in (0, 1)$, $t \in \{1, \dots, m\}$.}
$x' \gets 0$ (or $x'=A^Tw'$, $w' \in \mathbb{R}^m$ random), $r' \gets b-Ax'$

\While{ $ {\rm (}\|r'\| > \varepsilon{\rm )}$ $\wedge$  $ {\rm (}{r'}^THr' > \varepsilon {\rm )}$  $\wedge$  $ {\rm (} F_1(r')^THF_1(r') > \varepsilon  {\rm )}$   $\wedge$
$\cdots$ $\wedge$   $ {\rm (} F_1^{\circ (t-1)}(r')^THF_1^{\circ (t-1)}(r') > \varepsilon {\rm )}$}
{
$(\alpha_{t,1}(r'), \dots, \alpha_{t,t}(r'))^T \gets \text{minimum-norm solution of~~}  M_t(r') \alpha = \beta_t(r')$ (see (\ref{eqcoeff})),

$r' \gets r''(t)$, $x' \gets x''(t)$ (see (\ref{CTAxnewt}))}

\lIf{$ {\rm (} \Vert r' \Vert \leq \varepsilon {\rm )}$  $\vee$  $ {\rm (} \Vert {r'}^THr' \Vert \leq \varepsilon {\rm )}$} {STOP}
   \Else{$x' \gets \widehat x''_j(t)$ (see (\ref{widehatxx}))}
   \caption{(CTA) Iteration of $F_t$ for a fixed $t$, computing $\varepsilon$-approximate  solution of $Ax=b$ or $A^TAx=A^Tb$ (final $x'$ is approximate solution).} \label{4.1}
\end{algorithm}

\begin{remark}
As mentioned earlier, considering the theoretical iteration complexity of $F_t$, an alternative version of Algorithm \ref{4.1} can be implemented when $r_k(t)$ does not decrease sufficiently over a series of iterations. In such cases, we can switch to applying $F_t$ with respect to the normal equation, starting with $A^Tr_k(t)$. The decision to switch can be based on a user-defined number of iterations, while monitoring the improvement in the residual, or it can depend on whether the original system is overdetermined or underdetermined.
\end{remark}

Next, we describe Algorithm \ref{4.2}, which is based on the computation of the point-wise orbit at a specific $r_0=b-Ax_0 \neq 0$. It is not necessary to know the value of $s$, the degree of the minimal polynomial of $r_0$ with respect to $H$.

For each $t \leq s$, the while loop computes the unique solution $\alpha$ to the auxiliary equation, $M_t(r) \alpha = \beta_t(r)$. Then, it calculates $F_t(r)$ and the corresponding approximate solution $\overline x(t)$. If $|F_t(r)| \leq \varepsilon$ or $F_t(r)^THF_t(r) \leq \varepsilon$, the algorithm terminates.
If neither of these conditions is met, it increments $t$ and repeats. In the worst-case scenario, $t$ may reach $m$.  To define the algorithm, set

\begin{equation}  \label{CTApoint}
\overline r(t) = F_t(r_0)=r_0- \sum_{i=1}^t \alpha_{t,i}(r_0) H^ir_0, \quad
\overline x(t)= x_0+\begin{cases}
\sum_{i=1}^t \alpha_{t,i}(r_0)H^{i-1}r_0, & \text{if ~} H=A\\
\sum_{i=1}^t \alpha_{t,i}(r_0)A^T H^{i-1}r_0, & \text{if~} H=AA^T.
\end{cases}
\end{equation}

\begin{algorithm}[!htb]
\scriptsize
\SetAlgoNoLine
\KwIn{$A \in \mathbb{R}^{m \times n}$, $ b \in \mathbb{R}^m$, $\varepsilon \in (0, 1)$, $\text{Found}$ $\text{Boolean Variable}$ (In theory $\varepsilon=0$, see Remark \ref{rempoint}).}
$x_0 \gets \text{random non-zero point}  \in \mathbb{R}^n$, $r_0 \gets b-Ax_0$, $t \gets 1$, $\text{Found} \gets \text{False}$

\While{{\rm (}$ t \leq m$ {\rm)} $\wedge$ {\rm (} $\text{Found} = \text{False}$ {\rm )}}
{$(\alpha_{t,1}(r_0), \dots, \alpha_{t,t}(r_0))^T \gets \text{solution of~~}  M_t(r_0) \alpha = \beta_t(r_0)$ (see (\ref{eqcoeff})),

  \lIf{$\Vert F_t(r_0)\Vert \leq \varepsilon$} {$r_\varepsilon \gets \overline r(t)$, $x_\varepsilon \gets \overline x(t)$ (see  (\ref{CTApoint})), $\text{Found} \gets \text{True}$}
  \Else{\lIf{$F_t(r_0)^THF_t(r_0) \leq  \varepsilon$} {$r_\varepsilon \gets \overline r(t)$, $x_\varepsilon \gets \overline x(t)$ (see  (\ref{CTApoint})), $\text{Found} \gets \text{True}$}
  {\Else{$t \gets t+1$}}}}
\caption{(CTA) Computes approximate solution $x_\varepsilon$ of $Ax=b$ or $A^TAx=A^Tb$ via point-wise iteration of $\{F_t \}$.} \label{4.2}
\end{algorithm}

\begin{remark}   \label{rempoint} From Theorem \ref{thm4}, for some value $s$, the degree of the minimal polynomial of $r$ with respect to $H$, either $F_s(r)=0$ (solving $Ax=b$) or $A^TF_s(r)=0$ (solving $A^TAx=A^Tb$).  This means in theory the while loop will terminate even with $\varepsilon =0$, however in practice we select $\varepsilon$ to a desired accuracy.
\end{remark}

\subsection{Application of CTA to the Normal Equation} \label{sec4.5}
As suggested earlier, when applying CTA to $Ax=b$ does not sufficiently reduce the norm of the residual, giving indication that the equation is inconsistent,  we  apply CTA to the normal equation. Here we wish to show how the iterates of $F_t$ as applied to the normal equation will look like.  For a given residual $r'=b-Ax'$, let $\widehat r'= A^Tr'$. Let $H'=A^TA$. For a given $t \in \{1, \dots, n\}$,
the corresponding CTA applied to the normal equation is
\begin{equation}
\widehat F_t(\widehat r')= \widehat r' - \sum_{i=1}^t \widehat \alpha_{t,i}(\widehat r') {H'}^i \widehat r',
\end{equation}
where $\widehat \alpha_t(\widehat r') = (\widehat \alpha_{t,1}(\widehat r'), \dots, \widehat \alpha_{t,t}(\widehat r'))^T$ is the solution to the {\it auxiliary equation with respect the normal equation},
$\widehat M_t(\widehat r') \widehat \alpha = \widehat \beta_t(\widehat r')$.
Note that using the definition of $H'$ and $H$, the $ij$ entry of $\widehat M_t(\widehat r')$ satisfies  $\widehat r'^{T} {H'}^{i+j}\widehat r'={r'}^TA{H'}^{i+j}A^Tr'={r'}^TH^{i+j+1}r'$. Also, $\widehat \beta_{t,i}(\widehat r')= {r'}^TA{H'}^{i} A^Tr'={r'}^TH^{i+1}r'$.
Using this and the previously developed formula for iterates of $F_t$ on residual and approximate solution for the system $Ax=b$, see (\ref{FTR}) and the case of symmetric PSD matrix in (\ref{thmeq4gen}),  the theorem below gives the corresponding formulas for the normal equation:
\begin{theorem} Given the residual $r'=b-Ax'$ and  the least-squares residual $\widehat r'=A^Tr'$, let $H=AA^T$.  The corresponding residual and approximate solutions with respect to iteration of $\widehat F_t(\widehat r')$ are:
$$\widehat r''= \widehat F_t(\widehat r')= A^T \bigg( r' - \sum_{i=1}^t \widehat \alpha_{t,i}(r') H^i r' \bigg), \quad
x''=x'+ A^T\sum_{i=1}^t \widehat \alpha_{t,i}(r')  H^{i-1}r'. \qed$$
\end{theorem}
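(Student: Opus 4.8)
The plan is to reduce everything to the single commutation identity
\[
(A^TA)^k A^T = A^T (AA^T)^k, \qquad k \ge 0,
\]
which follows by a trivial induction on $k$: the base case $k=0$ is immediate, and $(A^TA)^{k+1}A^T = (A^TA)\,(A^TA)^kA^T = (A^TA)\,A^T(AA^T)^k = A^T\,(AA^T)^{k+1}$. Writing $H=AA^T$ and $H'=A^TA$, this says ${H'}^{i}A^T = A^TH^i$ for all $i\ge 0$, and it is essentially the only algebraic fact the proof needs.

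First I would fix the set-up: applying $F_t$ to the normal equation $A^TAx=A^Tb$ means instantiating the family of Subsection \ref{sec4.1} with the symmetric PSD system matrix $A^TA$ in place of $A$, hence with $H'=A^TA$ taken as the matrix itself, and with residual $\widehat r' = A^Tb - A^TAx' = A^T(b-Ax') = A^Tr'$. Next, as already noted just before the statement, the entries of the auxiliary system transform cleanly: setting $\widehat\phi_j(\widehat r') = (\widehat r')^{T}{H'}^{j}\widehat r'$ and $\phi_k(r') = (r')^{T}H^k r'$, the commutation identity together with $\widehat r'=A^Tr'$ gives
\[
\widehat\phi_j(\widehat r') = (r')^{T}A\,(A^TA)^{j}A^Tr' = (r')^{T}(AA^T)^{j+1}r' = \phi_{j+1}(r').
\]
Hence $\widehat M_t(\widehat r')_{ij} = \phi_{i+j+1}(r')$ and $\widehat \beta_t(\widehat r')_i = \phi_{i+1}(r')$ depend on $r'$ only (through $H$), which justifies writing $\widehat\alpha_{t,i}(\widehat r') = \widehat\alpha_{t,i}(r')$; in the non-unique case both denote the minimum-norm solution of the same $t\times t$ system.

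With this in hand the two formulas drop out by substitution. For the residual, starting from $\widehat F_t(\widehat r') = \widehat r' - \sum_{i=1}^t \widehat\alpha_{t,i}(\widehat r'){H'}^{i}\widehat r'$, I replace $\widehat r'=A^Tr'$ and use ${H'}^{i}\widehat r' = (A^TA)^iA^Tr' = A^T(AA^T)^ir' = A^TH^ir'$ to pull $A^T$ out of every summand, obtaining $\widehat F_t(\widehat r') = A^T\bigl(r' - \sum_{i=1}^t \widehat\alpha_{t,i}(r')H^ir'\bigr)$. For the iterate, since the system matrix $A^TA$ is symmetric PSD, the update is the ``$H=A$'' branch of (\ref{thmeq4gen}) (equivalently (\ref{CTAxnewt})) read with $A\mapsto A^TA$, namely $x'' = x' + \sum_{i=1}^t \widehat\alpha_{t,i}(\widehat r'){H'}^{i-1}\widehat r'$; applying ${H'}^{i-1}\widehat r' = A^TH^{i-1}r'$ and factoring out $A^T$ gives $x'' = x' + A^T\sum_{i=1}^t \widehat\alpha_{t,i}(r')H^{i-1}r'$, as claimed.

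The computation is essentially mechanical; the only genuine obstacle is the bookkeeping — getting the index shift right (the normal-equation $\widehat\phi_j$ corresponds to $\phi_{j+1}$, so the auxiliary matrix is the $t\times t$ block of $(\phi_{i+j+1}(r'))$, not of $(\phi_{i+j}(r'))$) and being careful to transcribe the iterate update from the symmetric-PSD branch, with $A^TA$ playing the role of ``the matrix'', rather than from the $H=AA^T$ branch. Once those are pinned down, each formula is the commutation identity applied term by term.
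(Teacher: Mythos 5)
Your proof is correct and follows the same route the paper sketches just before the theorem statement: recognize that the auxiliary-system entries for the normal equation satisfy $\widehat\phi_j(\widehat r') = \phi_{j+1}(r')$, then substitute into the symmetric-PSD branch of the iterate formulas and pull $A^T$ through. You merely make explicit the commutation identity $(A^TA)^k A^T = A^T(AA^T)^k$ that the paper uses implicitly in its one-line index-shift computation.
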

As an example, for $t=1$, the iteration of $F_1$ as applied to the norma equation at $\widehat r'= A^Tr'$ produces new least-squares residual  $\widehat r''
=\widehat r' - ({{r'}^TH^2r'}/{{r'}^TH^3r'})A^THr'$, $x'' =x' + \big ({{r'}^TH^2r'}/{{r'}^TH^3r'} \big ) \widehat r''$.
Thus, given that $Hr$ and $H^2r$ are computed, we can compute $F_1(r')$, as well as $\widehat F_1(\widehat r')$ with moderate computation. Note that the orbit of least-squares residuals is not merely the multiplication of the orbit of residuals by $A^T$.

\subsection{Space and Time Complexity of High-Order CTA} \label{sec4.6}

As before, given  $m \times n$ matrix $A$, let $H=AA^T$. If $A$ is symmetric PSD, $H$ can be taken to be $A$. For a given  $r \in \mathbb{R}^m$ and any $t \in \{0, 1, \dots m\}$, let  $C_t(r)$ be  the $m \times (2t+1)$ matrix with columns $H^ir$, $i=0, \dots, t$; also, for any $t \in \{1, \dots m\}$,
let $M_t(r)$ be the $t \times t$ coefficient matrix (see \ref{eqcoeff}):
\begin{equation}
C_t(r)=
\begin{pmatrix}
r&Hr&H^2r& \ldots&H^{2t}r\\
\end{pmatrix}, \quad
M_t(r)=(\phi_{i+j}(r))=(r^TH^{i+j}r)
\end{equation}

\begin{proposition} Let $N$ be the number of nonzero entries of $A$.

(1) Given that $C_t(r)$ is computed, $C_{t+1}(r)$ can be computed in $2N$ multiplications when $H=A$ and $4N$ multiplication when $H=AA^T$.
Moreover, for any $t \in \{1, \dots, m\}$,
$C_{t}(r)$ can be computed in at most $2Nt$ multiplication when $H=A$, and at most $4Nt$ multiplications when $H=AA^T$.

(2)  Given that $C_t(r)$ and $M_t(r)$ are computed, $M_{t+1}(r)$ can be computed in at most $2N+2m$ multiplications when $H=A$, and at most $4N+2m$ multiplication when $H=AA^T$.
For any $t \in \{1, \dots, m-1\}$,
$M_{t}(r)$ can be computed in at most $(2N+2m)t$ multiplication when $H=A$, and at most $(4N+2m)t$ multiplications when $H=AA^T$.
\end{proposition}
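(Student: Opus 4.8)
The plan is a direct bookkeeping argument in the ``number of multiplications'' cost model, in which a sparse matrix--vector product with $A$ (or $A^{T}$) costs at most $N$ multiplications, an inner product of two vectors in $\mathbb{R}^{m}$ costs at most $m$ multiplications, and vector additions and scalings are ignored. Two conventions should be fixed up front: a single application of $H$ to a vector costs $N$ multiplications when $H=A$ and $2N$ multiplications when $H=AA^{T}$ (compute $A^{T}v$, then $A(A^{T}v)$); and $C_{t}(r)$ is the matrix whose columns are $r,Hr,\dots,H^{2t}r$, so it has $2t+1$ columns.

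\emph{Part (1).} Since $C_{t+1}(r)$ has exactly the two extra columns $H^{2t+1}r$ and $H^{2t+2}r$, and $H^{2t}r$ is already the last column of $C_{t}(r)$, passing from $C_{t}(r)$ to $C_{t+1}(r)$ amounts to two successive applications of $H$: cost $2N$ when $H=A$ and $4N$ when $H=AA^{T}$. For the cumulative bound, note that building $C_{t}(r)$ from $r$ alone requires forming $H^{1}r,\dots,H^{2t}r$, i.e. $2t$ successive applications of $H$; multiplying the per-application cost by $2t$ gives $2Nt$, respectively $4Nt$.

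\emph{Part (2).} The entries of $M_{t}(r)=(\phi_{i+j}(r))_{1\le i,j\le t}$ are precisely the scalars $\phi_{2}(r),\dots,\phi_{2t}(r)$, where $\phi_{k}(r)=r^{T}H^{k}r$. Hence the only scalars genuinely new to $M_{t+1}(r)$ are the two highest moments $\phi_{2t+1}(r)$ and $\phi_{2t+2}(r)$, the remaining new matrix positions being copies of entries already present in $M_{t}(r)$. To produce these two scalars, first extend $C_{t}(r)$ to $C_{t+1}(r)$ (cost $2N$ or $4N$ by Part (1)), then evaluate $\phi_{2t+1}(r)=r^{T}(H^{2t+1}r)$ and $\phi_{2t+2}(r)=r^{T}(H^{2t+2}r)$ as two inner products in $\mathbb{R}^{m}$, costing $2m$; this yields the per-step bounds $2N+2m$ and $4N+2m$. (One may even bypass the matrix--vector work here by writing $\phi_{2t+1}(r)=(H^{t}r)^{T}(H^{t+1}r)$ and $\phi_{2t+2}(r)=(H^{t+1}r)^{T}(H^{t+1}r)$, whose factors already occur among the columns of $C_{t}(r)$ for $t\ge 1$; we state the weaker bound because the driving algorithms keep $C_{t+1}(r)$ on hand anyway.) The cumulative bound follows by the same split applied from scratch: compute $C_{t}(r)$ ($\le 2Nt$ or $\le 4Nt$), then the at most $2t$ scalars $\phi_{2}(r),\dots,\phi_{2t}(r)$, each as an $m$-vector inner product ($\le 2tm$ total), for a grand total $\le(2N+2m)t$, respectively $\le(4N+2m)t$.

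There is no real obstacle here; the argument is entirely combinatorial. The two points that need care are (i) recognizing that the transition $M_{t}(r)\to M_{t+1}(r)$ introduces only the two new moments $\phi_{2t+1}(r),\phi_{2t+2}(r)$ rather than a whole new row and column of independent work, and (ii) keeping the cost model consistent --- counting multiplications only, charging $N$ per sparse product with $A$ or $A^{T}$ and $m$ per length-$m$ inner product, and treating additions as free --- so that ``one application of $H$'' is uniformly $N$ or $2N$ multiplications throughout both parts.
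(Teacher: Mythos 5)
Your proof is correct and takes essentially the same route as the paper: identify the two new columns $H^{2t+1}r$, $H^{2t+2}r$ for part (1), and the two new moments $\phi_{2t+1}(r)$, $\phi_{2t+2}(r)$ for part (2), charging $N$ (resp.\ $2N$) per application of $H$ and $m$ per inner product. The parenthetical shortcut $\phi_{2t+1}(r)=(H^{t}r)^{T}(H^{t+1}r)$, $\phi_{2t+2}(r)=\Vert H^{t+1}r\Vert^{2}$ is a nice observation beyond what the paper records, but you correctly note it is not needed since the algorithms retain $C_{t+1}(r)$ anyway.
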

\begin{proof}

(1): Given that $C_t(r)$ is computed, to compute $C_{t+1}(r)$, we only need to compute  the last two columns, $H^{2t+1}r$, $H^{2t+2}r$. This requires multiplying $H^{2t}r$ by $H$ to get $H^{2t+1}r$, followed by multiplying $H^{2t+1}r$ by $H$. This requires to at most $2N$ multiplications
when $H=A$, and at most $4N$ multiplications when $H=AA^T$. This also bounds the overall number of operations to compute $C_t(r)$.

(2):  Given $M_{t}(r)$, the only new entries of $M_{t+1}(r)$ to be computed are
$\phi_{2t+1}(r)$ and $\phi_{2t+2}(r)$.  Since $C_t(r)$ is computed, we only need to compute  the last two columns of $C_{t+1}(r)$,  then their inner products with $r$. This requires $2m$ multiplications.  These imply the claimed complexity. From this we conclude the overall number of multiplications to compute $M_t(r)$.
\end{proof}

\begin{proposition}
Suppose the $LU$ factorization of $M_t(r)$ is at hand. Then, the $LU$ factorization of $M_{t+1}(r)$ can be computed in $O(t^2)$ operations.
The computation of the solution of the auxiliary equation, $M_t(r) \alpha = \beta_t(r)$, can be obtained in $O(m^3)$ operations.
\end{proposition}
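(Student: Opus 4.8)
The plan is to exploit the \emph{bordered Hankel} structure of the coefficient matrices. Because $M_t(r)=(\phi_{i+j}(r))_{1\le i,j\le t}$, the leading $t\times t$ block of $M_{t+1}(r)$ is exactly $M_t(r)$, and $M_{t+1}(r)$ arises from it by appending one symmetric row and column whose entries are $\phi_{t+2}(r),\dots,\phi_{2t+2}(r)$ — of which only $\phi_{2t+1}(r)$ and $\phi_{2t+2}(r)$ are genuinely new, and these are produced at cost $O(N+m)$ by the previous proposition. Thus, writing
\begin{equation}
M_{t+1}(r)=\begin{pmatrix} M_t(r) & v \\ v\trans & d \end{pmatrix},\qquad v=(\phi_{t+2}(r),\dots,\phi_{2t+1}(r))\trans,\quad d=\phi_{2t+2}(r),
\end{equation}
I would look for the factorization $M_{t+1}(r)=L_{t+1}U_{t+1}$ in block form with $L_{t+1}=\begin{pmatrix} L_t & 0\\ \ell\trans & 1\end{pmatrix}$ and $U_{t+1}=\begin{pmatrix} U_t & u \\ 0 & \mu\end{pmatrix}$.

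Before this, I would observe that in the regime where the update is used — $t\le s$, with $s$ the degree of the minimal polynomial of $r$ with respect to $H$ — Theorem \ref{thm4part2}(4) guarantees $M_t(r)$ is invertible, and since $M_t(r)=VDV\trans$ is symmetric PSD (see (\ref{vand1})) it is then positive definite, so all its leading principal submatrices are positive definite and the pivot-free factorization $M_t(r)=L_tU_t$ with $L_t$ unit lower triangular exists and is unique. Matching the blocks of $M_{t+1}(r)=L_{t+1}U_{t+1}$ yields $L_tu=v$, $U_t\trans\ell=v$, and $\mu=d-\ell\trans u$; hence $u$ is found by one forward substitution against the already-available $L_t$, $\ell$ by one back substitution against $U_t$ (each $O(t^2)$), and $\mu$ by one inner product ($O(t)$). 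This gives the $LU$ factorization of $M_{t+1}(r)$ in $O(t^2)$ operations, as claimed; the same count holds verbatim for updating the Cholesky factor if one wishes to preserve symmetry.

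For the second statement, iterating this update from the trivial $1\times1$ factorization of $M_1(r)$ through $k=1,\dots,t-1$ builds the $LU$ factorization of $M_t(r)$ in $\sum_{k<t}O(k^2)=O(t^3)$ operations, after which $M_t(r)\alpha=\beta_t(r)$ is solved by one forward and one back substitution in $O(t^2)$; since $t\le m$ this is $O(m^3)$ overall, consistent with plain Gaussian elimination when one solves a single system from scratch. The one point requiring care is the degenerate case in which $M_t(r)$ is singular — which by Theorem \ref{thm4part2} signals $t>s$, i.e.\ that an exact solution has already appeared at step $s$ — where the $LU$ step is replaced by a rank-revealing factorization of the PSD matrix $M_t(r)$ (e.g.\ spectral or SVD based) and the minimum-norm solution is returned via its pseudoinverse, still within $O(t^3)=O(m^3)$ operations. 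I do not expect a real obstacle here; the main thing to verify carefully is the bookkeeping — that the single new moment $\phi_{2t+2}(r)$ needed to border $M_t(r)$ is precisely the quantity the preceding proposition supplies cheaply, so that the claimed $O(t^2)$ update is not dominated by recomputing matrix entries.
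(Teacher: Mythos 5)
Your proposal is correct and follows essentially the same route as the paper's own (very terse) proof: bordering the existing $LU$ factorization by one row and column and matching blocks, giving an $O(t^2)$ update, with the $O(m^3)$ bound for the solve obtained by summing the incremental costs over $t$. You add useful rigor the paper omits — noting that $M_t(r)=VDV\trans$ is PSD and invertible for $t\le s$ so the pivot-free factorization exists, and handling the singular case — though the final phrase ``the single new moment $\phi_{2t+2}(r)$'' should read ``the two new moments $\phi_{2t+1}(r)$ and $\phi_{2t+2}(r)$,'' as you correctly state earlier.
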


\begin{proof}
Given the $LU$ factorization of $M_{t}(r)$, we need to process the last row of $M_{t+1}(r)$ to compute the new $LU$ factorization. This process overall requires $O(t^2)$ operations. With the $LU$ factorization of $M_t(r)$ in hand, computing the solution $\alpha_t(r)$ for the auxiliary equation can be achieved in $O(t^2)$ operations. Summing over $t=1, \dots, m$ to compute all solutions $\alpha_t(r)$, it takes $\sum_{t=1}^m O(t^2)=O(m^3)$ operations.
\end{proof}

Based on the proposition and considering that the computation of $F_t(r)=r - \sum_{i=1}^t \alpha_{t,i}(r)H^ir$ involves the calculation of $C_t(r)$ and $M_t(r)$, we can present the following complexity theorem.

\begin{theorem}  Given $r \in \mathbb{R}^m$, we have:

(i) Each iteration $r_{k+1}= F_t(r_{k})$ can be carried out in $O(Nt+t^3)$ operations, with the required space being that of $C_t(r_{k})$ and $M_t(r_k)$.

(ii) Let $s$ be the degree of the minimal polynomial of $r$ with respect to $H$. Then $\{F_1(r), \dots, F_s(r)\}$ can be computed in $O(Ns+s^3)$ operations, with the required space being that of $C_s(r)$ and $M_s(r)$. \qed
\end{theorem}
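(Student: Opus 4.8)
The plan is to derive both parts directly from the two preceding propositions on the cost of building $C_t(r)$, $M_t(r)$ and the incremental $LU$ factorization; the only new content is a careful accounting of which quantities must be (re)computed and which can be shared or updated incrementally.

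\emph{Part (i).} I would split one step $r_{k+1}=F_t(r_k)$ into four pieces. First, build the Krylov-type matrix $C_t(r_k)=(r_k,Hr_k,\dots,H^{2t}r_k)$: by the first proposition this is $2t$ (respectively $4t$ when $H=AA^T$) matrix--vector products, hence $O(Nt)$, in $O(mt)$ space. Second, read off $M_t(r_k)=(\phi_{i+j}(r_k))$ and $\beta_t(r_k)=(\phi_1(r_k),\dots,\phi_t(r_k))^T$: by the second proposition this is $O(t)$ inner products of length-$m$ vectors, i.e. $O(mt)$ more, in $O(t^2)$ space. Third, solve the auxiliary system $M_t(r_k)\alpha=\beta_t(r_k)$ for its minimum-norm solution; since $M_t(r_k)$ is $t\times t$, this is $O(t^3)$ via a rank-revealing factorization (or the pseudoinverse), which also covers the case that $M_t(r_k)$ is singular. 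Fourth, form $F_t(r_k)=r_k-\sum_{i=1}^t\alpha_{t,i}H^ir_k$, a combination of $t+1$ columns of $C_t(r_k)$, hence $O(mt)$, and then $x_{k+1}=x_k+\sum_{i=1}^t\alpha_{t,i}A^TH^{i-1}r_k$, which needs only one extra multiplication by $A^T$ applied to the already-formed combination $\sum_{i=1}^t\alpha_{t,i}H^{i-1}r_k$, hence $O(N)$. Summing and absorbing the $O(mt)$ and $O(N)$ terms into $O(Nt)$ (using $m,n=O(N)$ after deleting any zero rows/columns) gives $O(Nt+t^3)$, with working space exactly that of $C_t(r_k)$ and $M_t(r_k)$.

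\emph{Part (ii).} The goal is to show the cost of the whole finite orbit $\{F_1(r),\dots,F_s(r)\}$ is \emph{additive} in $N$ and $s$, not $s$ times the single-step cost of (i). The key points are: (a) every $F_t(r)$ with $t\le s$ uses only columns of the single matrix $C_s(r)=(r,Hr,\dots,H^{2s}r)$, computed once in $O(Ns)$ operations and $O(ms)$ space; (b) the scalars $\phi_0,\dots,\phi_{2s}$, from which every $M_t(r)$ and $\beta_t(r)$ is obtained, are likewise computed once in $O(ms)$; (c) by the second proposition, updating the factorization of $M_t(r)$ to that of $M_{t+1}(r)$ costs $O(t^2)$ and solving each auxiliary system given the factorization costs $O(t^2)$, so producing all coefficient vectors $\alpha_1(r),\dots,\alpha_s(r)$ costs $\sum_{t=1}^s O(t^2)=O(s^3)$; (d) the acceptance quantities $\|F_t(r)\|^2=\phi_0-2\sum_{i}\alpha_{t,i}\phi_i+\sum_{i,j}\alpha_{t,i}\alpha_{t,j}\phi_{i+j}$ and $F_t(r)^THF_t(r)$ are evaluated from the $\phi_j$'s and $\alpha_t(r)$ in $O(t^2)$ each, so the orbit is represented throughout by the pair $(C_s(r),\{\alpha_t(r)\}_{t=1}^s)$, and the explicit vectors $F_s(r)$ and $x_*(s)$ need be materialized only once, in $O(Ns)$. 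Adding $O(Ns)+O(ms)+O(s^3)+O(Ns)=O(Ns+s^3)$, with working space $O(ms+s^2)$, i.e. that of $C_s(r)$ and $M_s(r)$.

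\emph{Main obstacle.} The matrix--vector and inner-product bookkeeping is routine; the delicate point is securing the \emph{additive} bound in (ii), which rests on three things that must be said explicitly: sharing one $C_s(r)$ across all $t$, the incremental factorization so the linear-algebra cost telescopes to $O(s^3)$ rather than $O(s\cdot s^3)$, and the observation that the per-$t$ tests do not require forming each $F_t(r)$ as a length-$m$ vector, which would otherwise reintroduce an $O(ms^2)$ term. A secondary subtlety is the possible singularity of $M_t(r)$: by Theorem~\ref{thm4part2}(4) this does not happen for $t\le s$, so in the regime relevant to (ii) the auxiliary systems are genuinely invertible, while in (i) one simply commits to the $O(t^3)$ minimum-norm solver regardless.
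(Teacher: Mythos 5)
Your proof is correct and follows the derivation the paper intends but leaves implicit: the theorem is stated immediately after the two costing propositions, prefaced only by ``based on the proposition,'' and no separate proof is given, so the paper's own argument is essentially what you have written out. Two details you make explicit add real value. First, in part (ii) you observe that the acceptance quantities $\Vert F_t(r)\Vert^2$ and $F_t(r)^THF_t(r)$ can be read off from the scalars $\phi_j(r)$ and the coefficient vectors $\alpha_t(r)$ in $O(t^2)$ arithmetic, so that the length-$m$ vectors $F_1(r),\dots,F_{s-1}(r)$ never need to be materialized; this is needed for the bound as stated, since forming all $s$ of them explicitly would cost $O(ms^2)$, which is not dominated by $O(Ns+s^3)$ for intermediate $s$ (e.g., $m=N=s^2$). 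Second, you correctly justify the incremental $LU$ update in (ii) by invoking the invertibility of $M_t(r)$ for $t\le s$ established in the theorem on the point-wise orbit, while in (i) you prudently fall back on a rank-revealing or minimum-norm solve, still within the $O(t^3)$ budget, for the case that $M_t(r_k)$ may be singular.
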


\begin{remark}
It is possible to economize on the computation of some iterations of $F_t$ as follows: Suppose we have computed $r' = F_t(r)$. To compute the next iteration, $r'' = F_t(r')$, we have to compute $C_t(r')$ and $M_t(r')$. Instead, we can use $C_t(r)$ but approximate the entries of $M_t(r')$ using the Taylor approximation:
$$\phi_j(r')  \approx \phi_j(r) + (r'-r)^T \nabla \phi_j(r) = \phi_j(r) + 2(r'-r)^T H^{j} r= 2r'^TH^jr -r^TH^jr= (2r'-r)^TH^jr.$$
But $r'^TH^jr$ can be computed in $O(m)$ operations, allowing us to approximate the entries of $M_t(r')$ in $O(tm)$. Similarly, the corresponding approximation to $\alpha_t(r)$ can be computed in $O(t^3)$ operations. Consequently, rather than computing $r''$ in $O(Nt+t^3)$ operations, we can approximate it using $O(mt+t^3)$ operations. This approximation can continue as long as the residuals decrease sufficiently.
\end{remark}

\subsection{Sample Computational Results with CTA} \label{sec4.7}
In this subsection, we present computational results using CTA with a small parameter $t \leq 5$. Given a consistent square system $Ax=b$, we start with an initial residual, $r_0$, and proceed to evaluate $F_1(r_0)$, followed by $F_2 \circ F_1(r_0)$, and so on, up to $F_5 \circ \cdots \circ F_1(r_0)$. Furthermore, we have found it beneficial to use the last residual and cycle back to $F_1$. In other words, given the last residual, denoted by $r_5$, we generate the next residual as $F_4(r_5)$, and so forth. We calculate both the total number of iterations and the time taken to compute a solution within a specified level of precision.

First, using CTA, we solved systems where the matrix $A$ is of two types: symmetric positive definite (PD) and positive semidefinite (PSD), with dimensions $n=100, 500,$ and $1000$. Additionally, we employed MATLAB to solve the same systems using two widely used methods: the Conjugate Gradient (CG) and Generalized Minimal Residual (GMRES) methods.

For GMRES, we utilized the MATLAB function `gmres(A,b,5)' with a restart parameter set to $5$, making it directly comparable with $t \leq 5$ in CTA. The maximum iterations parameter was automatically determined by the GMRES implementation. We did not directly compare CTA with the MINRES method, as MATLAB's use of GMRES apparently already accounts for MINRES when the underlying matrix is symmetric. Furthermore, our computational results with symmetric PD and PSD matrices allowed us to make meaningful comparisons with CG, a method specifically designed for symmetric PD matrices.

Additionally, we solved other types of square linear systems with matrices of dimensions $500$ and $1000$,  employing both CTA and GMRES. When solving these via GMRES,  we utilize the given matrix $A$. However, with CTA we operate with $H=AA^T$. In these cases, we employed incomplete LU factorization as preprocessing.
It's worth noting that the statistics related to preconditioning are not included in the tables. The matrices we used in our experiments were generated  using Lotkin \cite{lotkin}, which consists of non-symmetric and ill-conditioned matrices with small, negative eigenvalues, as well as matrices from Dorr \cite{dorr}, which are diagonally dominant M-matrices that may be ill-conditioned but invertible. For MVMODE matrices, see Day and Dongarra \cite{mvmnep}.  More details can be found in the NEP collection titled MVMODE.

We have provided tables containing runtime and the number of iterations required to achieve a specific level of precision for matrices of dimensions $500 \times 500$ and $1000 \times 1000$. These tables are presented as follows:

\noindent $\bullet$ Table \ref{pd_fr1}: Results for positive definite (PD) matrices of dimensions $500, 1000$.

\noindent $\bullet$     Table \ref{pd_fr2}: Results for positive semidefinite (PSD) matrices of dimensions $500, 1000$.

\noindent $\bullet$    Table \ref{pd_fr3}: Results for three other types of square matrices of dimensions $500 \times  500$.

\noindent $\bullet$     Table \ref{pd_fr4}: Results for the three other types of square matrices of dimensions $1000 \times 1000$.

In these tables, the `quality of solution' is measured by the norm of the residual. Please note that in the last two tables, `NC' stands for `Not Converging.'

For PD and PSD matrices, Figures \ref{FigPD} and \ref{FigPSD} illustrate the number of iterations of CTA, CG and GMRES to achieve a relative residual error within various accuracies up to $10^{-12}$ for matrix dimensions $100$ and $1000$. The relative residual is calculated as $\Vert b- Ax_k \Vert/\Vert b \Vert$.

From these figures, it's evident that CTA performs at least as effectively as CG and GMRES for relative residual errors up to $10^{-6}$ and surpasses these algorithms when higher precision is required. Remarkably, this relative performance remains consistent across different matrix dimensions.

In the case of general matrices where preconditioning was applied, the number of iterations for CTA and GMRES are closely matched, making it less necessary to include such figures. Nevertheless, CTA requires fewer iterations, and its advantages are likely to become more pronounced as matrix dimensions increase.

\begin{table}[!htb]
\centering
\begin{tabular}{|ccccccc|}
\hline
\multicolumn{1}{|l|}{PD Matrices} & \multicolumn{2}{|l|}{Runtime(sec)} &
\multicolumn{2}{|l|}{No. of Iterations} & \multicolumn{2}{|l|}{Quality of Soln. }
\\ \hline
\multicolumn{1}{|c|}{$n$}   & \multicolumn{1}{c|}{500}  & \multicolumn{1}{c|}{1000} & \multicolumn{1}{c|}{500}  & \multicolumn{1}{c|}{1000} & \multicolumn{1}{c|}{500}  & \multicolumn{1}{c|}{1000}  \\ \hline
\multicolumn{1}{|c|}{CTA}  & \multicolumn{1}{c|}{0.33} & \multicolumn{1}{c|}{0.72} & \multicolumn{1}{c|}{460} & \multicolumn{1}{c|}{520} & \multicolumn{1}{c|}{1.0e-10} & \multicolumn{1}{c|}{1.0e-10} \\ \hline
\multicolumn{1}{|c|}{CG}  & \multicolumn{1}{c|}{0.38} & \multicolumn{1}{c|}{0.83} & \multicolumn{1}{c|}{410} & \multicolumn{1}{c|}{603} & \multicolumn{1}{c|}{1.0e-10} & \multicolumn{1}{c|}{1.0e-10} \\ \hline
\multicolumn{1}{|c|}{GMRES} & \multicolumn{1}{c|}{0.45} & \multicolumn{1}{c|}{0.91} & \multicolumn{1}{c|}{429} & \multicolumn{1}{c|}{936}  & \multicolumn{1}{c|}{1.0e-10} & \multicolumn{1}{c|}{1.01e-10} \\ \hline
\end{tabular}
\caption{Comparing  CTA, CG and GMRES for symmetric PD matrices of dimensions $n=500, 1000$.}
\label{pd_fr1}
\end{table}
\begin{table}[!htb]
\centering
\begin{tabular}{|ccccccc|}
\hline
\multicolumn{1}{|l|}{PSD Matrices} & \multicolumn{2}{|l|}{Runtime(sec)} &
\multicolumn{2}{|l|}{No. of Iterations} & \multicolumn{2}{|l|}{Quality of Soln. }
\\ \hline
\multicolumn{1}{|c|}{$n$}   & \multicolumn{1}{c|}{500}  & \multicolumn{1}{c|}{1000} & \multicolumn{1}{c|}{500}  & \multicolumn{1}{c|}{1000} & \multicolumn{1}{c|}{500}  & \multicolumn{1}{c|}{1000}  \\ \hline
\multicolumn{1}{|c|}{CTA}  & \multicolumn{1}{c|}{0.34} & \multicolumn{1}{c|}{0.74} & \multicolumn{1}{c|}{456} & \multicolumn{1}{c|}{501} & \multicolumn{1}{c|}{1.0e-10} & \multicolumn{1}{c|}{1.0e-10} \\ \hline
\multicolumn{1}{|c|}{CG}  & \multicolumn{1}{c|}{0.39} & \multicolumn{1}{c|}{0.90} & \multicolumn{1}{c|}{449} & \multicolumn{1}{c|}{642} & \multicolumn{1}{c|}{1.0e-10} & \multicolumn{1}{c|}{1.0e-10} \\ \hline
\multicolumn{1}{|c|}{GMRES} & \multicolumn{1}{c|}{0.56} & \multicolumn{1}{c|}{1.32} & \multicolumn{1}{c|}{488} & \multicolumn{1}{c|}{962}  & \multicolumn{1}{c|}{1.0e-10} & \multicolumn{1}{c|}{1.03e-10} \\ \hline
\end{tabular}
\caption{Comparing  CTA, CG and GMRES for symmetric PSD matrices of dimensions $n=500, 1000$.}
\label{pd_fr2}
\end{table}
\begin{table}[!htb]
\centering
\begin{tabular}{|ccccccc|}
\hline
\multicolumn{1}{|l|}{Matrix Type} & \multicolumn{2}{|l|}{Runtime(sec)} &
\multicolumn{2}{|l|}{No. of Iterations} & \multicolumn{2}{|l|}{Quality of Soln. }
\\ \hline
\multicolumn{1}{|c|}{$n=500$}   & \multicolumn{1}{c|}{CTA}  & \multicolumn{1}{c|}{GMRES} & \multicolumn{1}{c|}{CTA}  & \multicolumn{1}{c|}{GMRES} & \multicolumn{1}{c|}{CTA}  & \multicolumn{1}{c|}{GMRES}  \\ \hline
\multicolumn{1}{|c|}{Lotkin}  & \multicolumn{1}{c|}{0.63} & \multicolumn{1}{c|}{0.84} & \multicolumn{1}{c|}{34} & \multicolumn{1}{c|}{37} & \multicolumn{1}{c|}{1.0e-10} & \multicolumn{1}{c|}{1.0e-10} \\ \hline
\multicolumn{1}{|c|}{Dorr}  & \multicolumn{1}{c|}{0.64} & \multicolumn{1}{c|}{0.84} & \multicolumn{1}{c|}{34} & \multicolumn{1}{c|}{38} & \multicolumn{1}{c|}{1.0e-8} & \multicolumn{1}{c|}{1.0e-6} \\ \hline
\multicolumn{1}{|c|}{MVMODE} & \multicolumn{1}{c|}{0.64} & \multicolumn{1}{c|}{0.85} & \multicolumn{1}{c|}{35} & \multicolumn{1}{c|}{38}  & \multicolumn{1}{c|}{1.0e-10} & \multicolumn{1}{c|}{NC} \\ \hline
\end{tabular}
\caption{Comparing  CTA, GMRES for square matrices of dimensions $n=500$.}
\label{pd_fr3}
\end{table}
\begin{table}[!htb]
\centering
\begin{tabular}{|ccccccc|}
\hline
\multicolumn{1}{|l|}{Matrix Type} & \multicolumn{2}{|l|}{Runtime(sec)} &
\multicolumn{2}{|l|}{No. of Iterations} & \multicolumn{2}{|l|}{Quality of Soln. }
\\ \hline
\multicolumn{1}{|c|}{$n=1000$}   & \multicolumn{1}{c|}{CTA}  & \multicolumn{1}{c|}{GMRES} & \multicolumn{1}{c|}{CTA}  & \multicolumn{1}{c|}{GMRES} & \multicolumn{1}{c|}{CTA}  & \multicolumn{1}{c|}{GMRES}  \\ \hline
\multicolumn{1}{|c|}{Lotkin}  & \multicolumn{1}{c|}{0.94} & \multicolumn{1}{c|}{1.03} & \multicolumn{1}{c|}{48} & \multicolumn{1}{c|}{53} & \multicolumn{1}{c|}{1.0e-10} & \multicolumn{1}{c|}{1.0e-10} \\ \hline
\multicolumn{1}{|c|}{Dorr}  & \multicolumn{1}{c|}{0.96} & \multicolumn{1}{c|}{1.01} & \multicolumn{1}{c|}{49} & \multicolumn{1}{c|}{54} & \multicolumn{1}{c|}{1.0e-8} & \multicolumn{1}{c|}{1.0e-6} \\ \hline
\multicolumn{1}{|c|}{MVMODE} & \multicolumn{1}{c|}{0.96} & \multicolumn{1}{c|}{1.21} & \multicolumn{1}{c|}{49} & \multicolumn{1}{c|}{54}  & \multicolumn{1}{c|}{1.0e-10} & \multicolumn{1}{c|}{NC} \\ \hline
\end{tabular}
\caption{Comparing  CTA vs. GMRES for square matrices of dimension $n=1000$.}
\label{pd_fr4}
\end{table}
\begin{figure}[!htb]
\centering
\includegraphics[width=3.2 in]{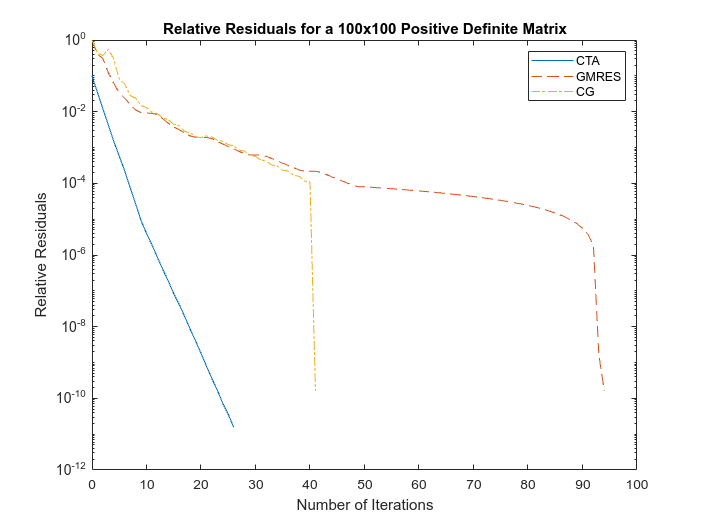}\
\includegraphics[width=3.2 in]{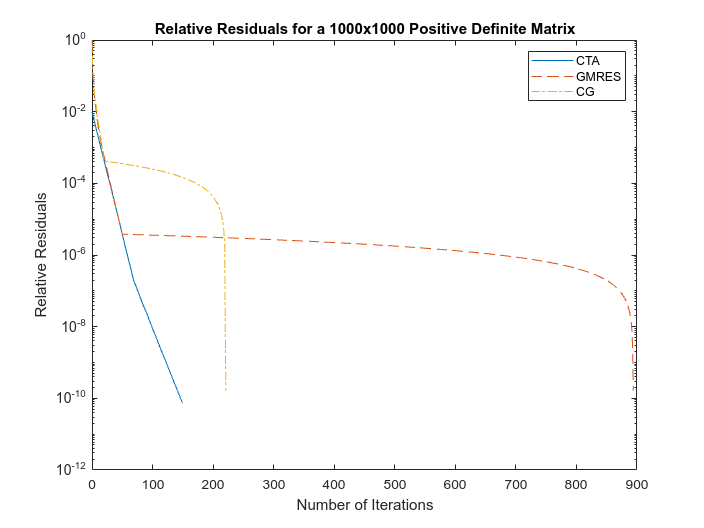}
\caption{Comparison of relative residuals for CTA, CG and GMRES for PD matrices, $n=100, 1000$.}
\label{FigPD}
\end{figure}
\begin{figure}[!htb]
\centering
\includegraphics[width=3.2 in]{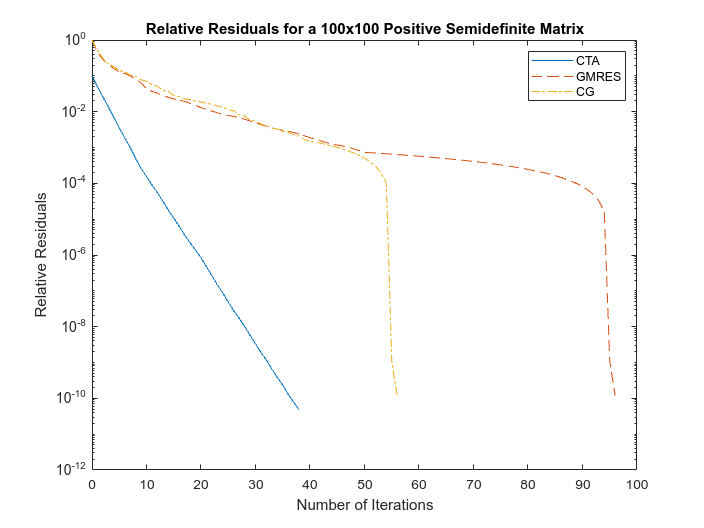}\
\includegraphics[width=3.2 in]{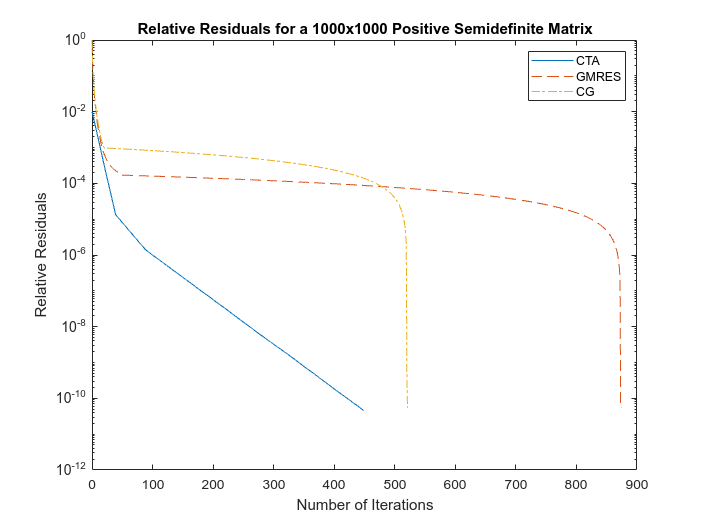}
\caption{Comparison of relative residuals for CTA, CG and GMRES for PSD matrices, $n=100, 1000$.}
\label{FigPSD}
\end{figure}

Clearly, in the cases of PD and PSD matrices, CTA outperforms CG and GMRES. Moreover, when applied to the three other types of matrices, CTA demonstrates superior performance compared to GMRES. These results lead us to believe that CTA is a highly competitive alternative to state-of-the-art linear solvers. For more comprehensive computational results with CTA and TA, as well as comparisons with state-of-the-art algorithms, we refer the reader to \cite{kallau23} and its forthcoming revision. That article also includes links to codes for CTA and TA, allowing practitioners to conduct their own experiments and comparisons if desired.

\section*{Final Remarks}
In this article we have introduced the {\it Triangle Algorithm} (TA) and the class of {\it Centering Triangle Algorithm} (CTA)  for computing exact or approximate solution, as well as exact or approximate minimum-norm solution of a linear system $Ax=b$ or its {\it normal equation} $A^TAx=A^Tb$, where $A$ is an $m \times n$ matrix of arbitrary rank. The algorithms implicitly try to solve $AA^Tw=b$, the {\it normal equation of the second type}, so that the computed approximate or exact solutions to $Ax=b$ are in fact minimum-norm such solutions. However, when $A$ is symmetric PSD, we described special versions of the algorithms with improved complexity bounds.

TA  tests if $b$ lies in a given ellipsoid $E_{A, \rho}= \{Ax: \Vert x \Vert \leq \rho\}$ and when necessary,  systematically adjusts $\rho$ to compute $\varepsilon$-approximate solution to $Ax=b$ or its  normal equation. Next, by allowing each iterate of TA to become the center of a new ellipsoid of appropriate $\rho$, we derived {\it first-order} CTA, whose iterates are conveniently  generated via $F_1(r)=r-(r^THr/r^TH^2r)Hr$, wherein $H=AA^T$. However, when $A$ is symmetric PSD, $H$ can be taken to be $A$ itself.  More generally, we developed the CTA family, $\{F_t, t=1, \dots, m\}$. Letting $\kappa^+ \equiv \kappa^+(H)$ represent the ratio of largest to smallest positive eigenvalues of $H$, we proved when $Ax=b$ is consistent, regardless of invertibility of $H$,
starting with arbitrary $x_0=A^Tw_0$, in $k=O(\kappa^+ t^{-1} \ln \varepsilon^{-1})$ iterations of $F_t$, CTA computes $b_k=Ax_k$ satisfying $\Vert b-b_k \Vert \leq \varepsilon$.  Moreover, regardless of consistency of $Ax=b$,  in $k =O(\kappa^+ t^{-1}\varepsilon^{-1})$ iterations, $\Vert A^Tr_k\Vert= O(\sqrt{\varepsilon})$. However, since the normal equation is always consistent, CTA can be applied to the normal equation to compute in $O(\kappa^+(A^TA) t^{-1} \ln \varepsilon^{-1})$ iterations an  $\varepsilon$-approximate least-squares solution. This suggests given a general system, $Ax=b$, we apply CTA. If $r_k$ does not show sufficient improvement in a few successive iterations, we switch applying CTA to the normal equation, starting with  $A^Tr_k$. Since $\kappa^+(AA^T)=\kappa^+(A^TA)$, it follows that in $O(\kappa^+(AA^+) t^{-1} \ln \varepsilon^{-1})$ iterations of $F_t$, CTA is capable of computing an $\varepsilon$-approximate solution of $Ax=b$ when it is consistent, and when inconsistent, an $\varepsilon$-approximate solution of the normal equation. This makes CTA quite robust.  It's worth noting that in certain cases, such as overdetermined systems, applying CTA directly to the normal equation can be more advantageous. In any case, the cost of each iteration is $O(N)$, where $N$ is the number of nonzero entries of $A$.   When $H=AA^T$, its usage is implicit. Likewise, any usage of $A^TA$ is implicit.

Moreover, we have proven that given any residual $r_0$ having $s$ as the degree of its minimal polynomial with respect to $H$, the CTA family produces exact solution via the {\it point-wise orbit} $\{F_t(r_0)\}_{t=1}^s$, which can be computed in $O(Ns+s^3)$ operations.
Specifically, $Ax=b$ is solvable if and only if $F_{s}(r_0)=0$. Furthermore, exclusively $A^TAx=A^Tb$ is solvable if and only if $F_{s}(r_0) \neq 0$ and $A^T F_s(r_0)=0$. Although this convergence is of theoretical interest, it suggests that in practice, we can generate a truncated version of the point-wise sequence, stopping at small values of $t$. Then, we can repeat the process with the new residual.

Numerous strategies can be devised around CTA. For smaller values of $t$, we can employ a combination of truncated point-wise orbits followed by iterations of $F_t$. CTA members can be used individually or in combination. For example, the composition $F_2 \circ F_1$ can be applied as follows: If $\Vert F_1(r)\Vert$ sufficiently reduces $\Vert r \Vert$, we proceed with the iteration of $F_1$. Otherwise, we compute $F_2(F_1(r))$. This composite approach may be implemented up to larger values of $t$, bounded by a small constant. In an alternate strategy, starting with an initial residual $r_0$, we generate $r_1=F_1(r_0)$, $r_2=F_2(r_1)$, and so on, up to $r_t=F_t(r_{t-1})$. Subsequently, we replace $r_0$ with $r_t$ and repeat this process. When $Ax=b$ is consistent, this ensures that in each cycle, $\Vert r_t \Vert \leq \Vert r_0 \Vert [(\kappa^+-1)/(\kappa^++1)]^{1+2+\cdots+t}$.

As demonstrated here, the worst-case scenario of $F_1$ occurs when $r_0$ is a linear combination of two eigenvectors having distinct positive eigenvalues of $H$. However, this is the best case for $F_2$ since $F_2(r_0)=0$. We have established  iteration complexity bounds for each member of the CTA family. However, except for $t=1$, these bounds are not necessarily optimal. Therefore, in practice, we would expect the iterations of $F_t$ to be much more effective than the bounds would indicate. Deriving the tightest worst-case complexity is indeed an interesting but challenging problem, even for $t=2$. We would expect that when $H$ has few positive eigenvalues or when the minimal polynomial of $r_0$ with respect to $H$ has a small degree, the iterates will be very effective. This should also be the case when the eigenvalues of $H$ are clustered.

To compute the coefficients in $F_t(r)$, we need to solve the $t \times t$ auxiliary linear system. While for small $t$, we do not need to use any special techniques, we have demonstrated that the coefficient matrix of the auxiliary system is both positive semidefinite and positive. Consequently, it can be diagonally scaled into a doubly stochastic matrix, serving as a preconditioning step. Regardless, to solve the auxiliary system, especially for larger values of $t$, we could use CTA itself to approximate its solution. While the approximate solution to $Ax=b$ computed via CTA with $H=AA^T$ is necessarily an approximate minimum-norm solution, this is not necessarily the case when $A$ is symmetric PSD and $H=A$. The same applies when CTA is applied to the normal equation. However, we can use TA to compute approximate minimum-norm solutions, from given approximate solutions. In other words, TA and CTA can be used jointly to compute desired approximate minimum-norm solutions to $Ax=b$ when $H=A$ and such solutions to the normal equation. We also note that according to the complexity bound of CTA, letting $t=\sqrt{m}$, the bound on the number of iterations of $F_t$ becomes $O((\kappa^+/\sqrt{m}) \ln \varepsilon^{-1})$, with each iteration costing $O(\sqrt{m}N+m^{3/2})$ operations. This suggests we can reduce the theoretical bound on the number of iterations while doing more work per iteration. For matrices where $\kappa^+$ is smaller than $\sqrt{m}$ (a fair assumption in many practical situations, especially for large-scale matrices), takeing $t \approx \sqrt{\kappa^+}$, the bound on the number of iterations of CTA becomes $O(\sqrt{\kappa^+} \ln \varepsilon^{-1})$.

The article's theoretical findings introduce the CTA family as a dependable set of iteration functions specifically designed for addressing general linear systems of equations. This is a fundamental issue in both mathematics and computer science. The CTA family's significance lies in its ability to guarantee convergence properties, irrespective of the specific characteristics or properties of the coefficient matrix within the linear system being solved.

Furthermore, these theoretical results also point to new avenues for further research. The assurance of convergence provided by the CTA family prompts exploration into optimizing these algorithms further, examining their adaptability across various applications, or investigating their computational efficiency and convergence rates in diverse scenarios. Within this article, we've scrutinized these algorithms through the lens of infinite precision. Consequently, delving into a formal error analysis to comprehend the effects of finite precision arithmetic stands as a valuable subject for future research. Nevertheless, considering our computational experiments and comparisons with established algorithms, the practical performance of CTA seems notably satisfactory.

The computational experiments conducted in our article involved the utilization of CTA, assessing its performance in comparison to CG and GMRES, resulting in promising outcomes. In a separate publication, referenced as \cite{kallau23}, we present an extensive collection of computational findings concerning CTA, incorporating comparisons with state-of-the-art algorithms.

In addition to solving linear systems, both TA and CTA have other applications. For instance, an application stemming from the point-wise assessment of the CTA family involves computing factors of the minimal polynomial of a specified PSD matrix (refer to Remark \ref{minpolyfactor}). A significant application pertains to solving the linear programming feasibility problem, which is fundamental in optimization. In \cite{kallau23}, we also present computational findings related to LP feasibility problems.

We urge readers, researchers, and practitioners in numerical and scientific computing to evaluate the performance of the proposed algorithms from a computational perspective. They can accomplish this by conducting  experiments with their unique linear systems, utilizing either the implementations of CTA or TA as detailed in \cite{kallau23}, or by creating their own versions of these algorithms. As practitioners consistently conduct their experiments, these algorithms might exhibit their competitiveness among state-of-the-art linear solvers over time.

\section*{Acknowledgements} I thank Chun Lau for carrying out the computational results presented here.

\bibliographystyle{plain}
\bibliography{biblio4}
\end{document}